\definecolor{lightblue}{rgb}{.85,.93,1}
\newcommand{\mbar}{\bar{M}}
\newcommand{\cdotroomy}{\,\cdot\,}
\newcommand{\llq}{L^{q}_{D}(\omega)}
\newcommand{\lqp}{L^{q'}_{D^*}(\omega)}
\newcommand{\lp}{L^{p}_{C}(\sigma)}
\newcommand{\lpp}{L^{p'}_{C^*}(\sigma)}
\newcommand{\domega}{\,\mathrm{d}\omega}
\newcommand{\dsigma}{\,\mathrm{d}\sigma}
\newcommand{\dmu}{\,\mathrm{d}\mu}
\newcommand{\dx}{\,\mathrm{d}x}
\newcommand{\angles}[1]{\langle #1 \rangle}
\newcommand{\br}{\mathbb{R}}
\newcommand{\cd}{\mathcal{D}}
\newcommand{\cf}{\mathcal{F}}
\newcommand{\bn}{\mathbb{N}}
\newcommand{\be}{\mathbb{E}}
\newcommand{\bz}{\mathbb{Z}}
\newcommand{\ca}{\mathcal{A}}
\newcommand{\cs}{\mathcal{S}}
\newcommand{\Q}{|Q|}
\newcommand{\abs}[1]{\lvert#1\rvert}
\theoremstyle{plain}
\newtheorem{theorem}{Theorem}[section]
\newtheorem{corollary}[theorem]{Corollary}
\newtheorem{proposition}[theorem]{Proposition}
\newtheorem{question}[theorem]{Question}
\theoremstyle{definition}
\newtheorem{definition}[theorem]{Definition}
\newtheorem{lemma}[theorem]{Lemma}
\newtheorem{example}[theorem]{Example}
\theoremstyle{definition}
\theoremstyle{remark}
\newtheorem*{remark}{Remark}
\numberwithin{equation}{section}
\newcommand{\norm}[1]{\lVert#1\rVert}
\newcommand{\ch}{\textup{ch}}
\newcommand{\chf}{\textup{ch}_\mathcal{F}}
\newcommand{\chs}{\textup{ch}_\mathcal{S}}
\newcommand{\chg}{\textup{ch}_\mathcal{G}}
\newcommand{\eg}{E_{\mathcal{G}}}
\newcommand{\ef}{E_{\mathcal{F}}}
\newcommand{\es}{E_{\mathcal{S}}}
\newcommand{\pif}{{\pi_\mathcal{F}}}
\newcommand{\pis}{{\pi_\mathcal{S}}}
\newcommand{\pig}{{\pi_\mathcal{G}}}
\newcommand{\avo}[1]{\langle #1\rangle^\omega}
\newcommand{\avs}[1]{\langle #1\rangle^\sigma}
\newcommand{\cg}{\mathcal{G}}
\newcommand{\cah}{\mathcal{H}}
\begin{document}

\date{\today}
\subjclass[2010]{42B20,42B25,46E40}
     
\keywords{vector-valued,weight,testing conditions,norm inequality,parallel stopping cubes}

\title[Two-weight inequality for vector-valued positive dyadic operators]{Two weight inequality for vector-valued positive dyadic operators by parallel stopping cubes}

\author{Timo S. H\"anninen}
\address{Department of Mathematics and Statistics, University of Helsinki, P.O. Box 68, FI-00014 HELSINKI, FINLAND}
\email{timo.s.hanninen@helsinki.fi}

\begin{abstract}We study the vector-valued positive dyadic operator
\[T_\lambda(f\sigma):=\sum_{Q\in\mathcal{D}} \lambda_Q \int_Q f \,\mathrm{d}\sigma 1_Q,\]
where the coefficients $\{\lambda_Q:C\to D\}_{Q\in\mathcal{D}}$ are positive operators from a Banach lattice $C$ to a Banach lattice $D$. We assume that the Banach lattices $C$ and $D^*$ each have the Hardy--Littlewood property. An example of a Banach lattice with the Hardy--Littlewood property is a Lebesgue space.

In the two-weight case, we prove that the  $L^p_C(\sigma)\to L^q_D(\omega)$ boundedness of the operator $T_\lambda(\,\cdot\,\sigma)$ is characterized by the direct and the dual $L^\infty$ testing conditions:
\[
\lVert 1_Q T_\lambda(1_Q f \sigma)\rVert_{L^q_D(\omega)}\lesssim \lVert f\rVert_{L^\infty_C(Q,\sigma)} \sigma(Q)^{1/p},\]
\[
\lVert1_Q T^*_{\lambda}(1_Q g \omega)\rVert_{L^{p'}_{C^*}(\sigma)}\lesssim \lVert g\rVert_{L^\infty_{D^*}(Q,\omega)} \omega(Q)^{1/q'}.\] 
Here $L^p_C(\sigma)$ and $L^q_D(\omega)$ denote the Lebesgue--Bochner spaces associated with exponents $1<p\leq q<\infty$, and locally finite Borel measures $\sigma$ and $\omega$.

In the unweighted case, we show that the $L^p_C(\mu)\to L^p_D(\mu)$  boundedness of the operator $T_\lambda(\,\cdot\,\mu)$ is equivalent to the endpoint direct $L^\infty$ testing condition:
\[
\lVert1_Q T_\lambda(1_Q f \mu)\rVert_{L^1_D(\mu)}\lesssim \lVert f\rVert_{L^\infty_C(Q,\mu)} \mu(Q).\]
This condition is manifestly independent of the exponent $p$. By specializing this to particular cases, we recover some earlier results in a unified way.

\end{abstract}

\maketitle
\tableofcontents

\section*{Notation}
\begin{tabular*}{\textwidth}{c l}
$E$ & A Banach lattice $(E,\abs{\cdotroomy}_E,\leq)$.\\
$E_+$ & The positive cone of a Banach lattice, $E_+:=\{e\in E : e\geq 0\}$.\\
$E^*$ & The dual space of a Banach lattice, equipped with \\
& the order: $e^*\geq 0$ if and only if $e^*e\geq 0$ for all $e\in E_+$.\\
$\cd$ & A finite collection of dyadic cubes.\\
$\mu$ & A locally finite Borel measure.\\
$\dx$ & The Lebesgue measure.\\
$\abs{Q}$ & The Lebesgue measure of a set $Q$.\\
$\angles{f}^\mu_Q$ & The average $\angles{f}^\mu_Q:=\frac{1}{\mu(Q)}\int_Q f \dmu$.\\
$\angles{f}_Q$ & The average $\angles{f}_Q:=\angles{f}^{\dx}_Q$.\\
$L^p_E(\mu)$ & The Lebesgue--Bochner space, \\
&equipped with the norm $\norm{f}_{L^p_E(\mu)}:=(\int \abs{f}_E^p \dmu)^{1/p}.$\\
$L^p_E$ & The Lebesgue--Bochner space $L^p_E:=L^p_E(\dx)$.\\
$\mbar^\mu_{\cd}$ & The lattice maximal function: $\mbar^\mu f:=\sup_{Q\in\cd} \angles{f}_Q^\mu 1_Q$,\\
& where the supremum is taken in the lattice order.\\
$\norm{\mbar^\mu}_{L^p_E(\mu)\to L^p_E(\mu)}$ & Shorthand for the uniform bound:  $\sup_{\text{$\cd$}} \norm{\mbar^\mu_\cd}_{L^p_E(\mu)\to L^p_E(\mu)}. $
\end{tabular*}
\section{Introduction and the main results}
Let $(C,\abs{\cdotroomy}_C,\leq )$ and $(D, \abs{\cdotroomy}_D,\leq )$ be Banach lattices. We consider the vector-valued positive dyadic operator $T_\lambda(\, \cdot\, \sigma)$ defined as follows: For every locally integrable function $f:\br^d\to C$, the function $T_\lambda(f\sigma):\br^d\to D$ is defined by
\begin{equation}\label{eq_operator}
T_\lambda(f\sigma):=\sum_{Q\in\cd} \lambda_Q \int_Q f \dsigma 1_Q,
\end{equation}
where $\cd$ is a finite collection of dyadic cubes on $\br^d$, $\sigma$ is a locally finite Borel measure, and $\{ \lambda_Q :C\to D \}_{Q\in\cd}$ are positive operators. 

Let $L^p_C(\sigma)$ and $L^q_D(\omega)$ denote the Lebesgue--Bochner spaces associated with the exponents $1<p\leq q<\infty$, locally finite Borel measures $\sigma$ and $\omega$, and the Banach lattices $C$ and $D$. We assume that $C$ and $D^*$ each have the Hardy--Littlewood property. We characterize the two-weight norm inequality
\begin{equation}\label{eq_norminequality}
\norm{T_\lambda(f\sigma)}_{\llq} \lesssim \norm{f}_{\lp}
\end{equation}
by means of testing conditions.  
Furthermore, we characterize the unweighted norm inequality
\begin{equation*}\label{eq_unweightednorminequality}
\norm{T_\lambda(f\mu)}_{L^q_D(\mu)} \lesssim \norm{f}_{L^p_C(\mu)}
\end{equation*} 
by means of an end-point testing condition. Among the corollaries of this characterization is that the operator $T_\lambda(\cdotroomy \mu):L^p_C(\mu) \to L^p_D(\mu)$ is bounded for some $p\in(1,\infty)$ if and only if it is bounded for every $p\in(1,\infty)$. 

A {\it Banach lattice} $(C,\abs{\cdotroomy}_C,\leq)$ is a Banach space $(C,\abs{\cdotroomy}_C)$ equipped with a partial order $\leq$ that is compatible with the vector addition, the scalar multiplication, and the norm of the Banach space, and such that each pair of vectors has the least upper bound, or, in other words, the supremum. (The precise definition of a Banach lattice is given in Section \ref{subsec_rudimentsbanachlattices}.) A linear operator $\lambda:C\to D$ from a Banach lattice $C$ to a Banach lattice $D$ is {\it positive} if $c\geq 0$ implies $Tc\geq 0$, for every $c\in C$. The {\it dyadic lattice  Hardy--Littlewood maximal operator} $\bar{M}_\cd:L^p_C\to L^p_C$ is defined by
\begin{equation}\label{eq_latticemaximalfcn}
\bar{M}_\cd f:=\sup_{Q\in\cd} \angles{f}_Q 1_Q,
\end{equation}
where the supremum is taken with respect to the order of the lattice. 
\begin{definition}[Dyadic Hardy--Littlewood property]\label{def_hardylittlewoodproperty}A Banach lattice $(E, \abs{\cdotroomy}_E,\leq)$  has the {\it dyadic Hardy--Littlewood property} if for some $p\in(1,\infty)$ there exists a finite constant $C_{p,E}$ such that 
\begin{equation}\label{eq_hardylittlewoodproperty}
\norm{\mbar_\cd}_{L^p_E\to L^p_E} \leq C_{p,E} 
\end{equation}
for every finite collection $\cd$ of dyadic cubes.
\end{definition}
\begin{remark}
The estimate \eqref{eq_hardylittlewoodproperty} holds for some $p\in(1,\infty)$ if and only if it holds for every $p\in(1,\infty)$, as proven by Garc\'ia-Cuerva, Mac\'ias, and Torrea in \cite{torrea1993}.
\end{remark}

\begin{example}

a) The Lebesgue space $L^r(A,\ca,\alpha)$ associated with an exponent $r\in(1,\infty)$ and a $\sigma$-finite measure space $(A,\ca,\alpha)$ is a Banach lattice that has the dyadic Hardy--Littlewood property, which is a choice of words for saying that the dyadic Fefferman--Stein vector-valued maximal inequality \cite{fefferman1971} holds:
$$
\norm{\bar{M} }_{L^p_{L^r(A)}\to L^p_{L^r(A)}} \leq C_{p,r}.
$$

\noindent b) A K\"othe function space $X$ with the Fatou property has the UMD property if and only if both $X$ and its function space dual $X'$ have the Hardy--Littlewood property, as proven by Bourgain,  and Rubio de Francia (see \cite{bourgain1984}, and \cite{rubiodefrancia1985}).
\end{example}
The Hardy--Littlewood property is studied by Garc\'ia-Cuerva, Mac\'ias, and Torrea in \cite{torrea1993} and \cite{torrea1998}. Among other things, they obtain various characterizations of the property. 
In fact, they define the Hardy--Littlewood property by means of the Hardy--Littlewood maximal operator with the supremum taken over centered balls,
whereas we define it with the supremum taken over dyadic cubes. In any case, for the Lebesgue measure, these maximal functions are comparable, as explained in Section \ref{sec_comparisionofdyadicandcentered}.

By duality, the norm inequality \eqref{eq_norminequality} for the operator $T_\lambda(\cdotroomy \sigma):\lp\to \llq$ is equivalent to the norm inequality
\begin{equation}\label{eq_dualnorminequality}
\norm{T^*_\lambda(g\omega)}_{\lpp}\lesssim \norm{g}_{\lqp}
\end{equation}
for the adjoint operator $T^*_\lambda(\cdotroomy \omega):\lqp\to\lpp$  defined by
\begin{equation*}\label{eq_adjointoperator}
T^*_\lambda(g\omega):=\sum_{Q\in\cd} \lambda_Q^* \int_Q g \domega 1_Q.
\end{equation*}
The localized versions $T_R$ of the operator $T$ and the localized version $T^*_R$ of its adjoint $T^*$ are defined by
\begin{equation}\label{eq_localization}
T_{\lambda,R}(f\sigma):=\sum_{\substack{Q\in\cd:\\Q\subseteq R}} \lambda_Q \int_Q f \dsigma 1_Q\quad \text{ and } \quad T^*_{\lambda,R}(g\omega):=\sum_{\substack{Q\in\cd:\\Q\subseteq R}} \lambda_Q^* \int_Q g \domega 1_Q.
\end{equation}

The characterization of the norm inequality \eqref{eq_norminequality} is obtained by weakening it and its dual \eqref{eq_dualnorminequality} by restricting the class of functions and by localizing the operator $T$ and its adjoint $T^*$ as in \eqref{eq_localization}. Thus, we obtain {\it the direct and the dual $L^\infty$ testing condition}:
\begin{subequations}
\label{eq_linftytestingconditions}
\begin{align}
\label{eq_testing_direct}\norm{T_R(f\sigma)}_{\llq}&\leq \mathfrak{T} \norm{f}_{L^\infty_C(R,\sigma)} \sigma(R)^{1/p}, \\
\label{eq_testing_dual}\norm{T^*_R(g\,\omega)}_{\lpp}&\leq \mathfrak{T}^* \norm{g}_{L^\infty_{D^*}(R,\omega)} \omega(R)^{1/q'},
\end{align}
\end{subequations}
for every $R\in\cd$, every $f\in L^\infty_C(R,\sigma)$, and every $g\in L^\infty_{D^*}(\omega,R)$.

\begin{theorem}[Two-weight norm inequality is characterized by the direct and the dual $L^\infty$ testing conditions] \label{thm_twoweight}Let $1<p\leq q<\infty$. 
Let $\sigma$ and $\omega$ be locally finite Borel measures. 
Let $C$ and $D$ be Banach lattices. Assume that $C$ and $D^*$ each have the dyadic Hardy--Littlewood property. Let $\{\lambda_Q:C\to D\}_{Q\in\cd}$ be positive operators. Let the operator $T_\lambda(\cdotroomy \sigma)$ be defined as in \eqref{eq_operator}, and the localizations $T_{\lambda,R}(\cdotroomy \sigma)$ and $T^*_{\lambda,R}(\cdotroomy \omega)$ as in \eqref{eq_localization}.  Then, 
$$ 
\max\{\mathfrak{T},\mathfrak{T}^*\}\leq \norm{T(\,\cdot\,\sigma)}_{L^p_C(\sigma)\to L^q_D(\omega)} \lesssim_{q,p} \norm{\bar{M}}_{L^p_C\to L^p_C} \mathfrak{T} +  \norm{\bar{M}}_{L^{q'}_{D^*}\to L^{q'}_{D^*}} \mathfrak{T^*},
$$
where the testing constants $\mathfrak{T}$ and $\mathfrak{T^*}$ are the least constants in the testing conditions \eqref{eq_testing_direct} and \eqref{eq_testing_dual}.
Here, $\norm{\bar{M}}_{L^p_C\to L^p_C}$ denotes the norm of the dyadic lattice  Hardy--Littlewood maximal operator $\bar{M}:L^p_C\to L^p_C$ defined in $\eqref{eq_latticemaximalfcn}$. 
\end{theorem}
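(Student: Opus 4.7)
The lower bound $\max\{\mathfrak{T},\mathfrak{T}^*\}\leq\norm{T_\lambda(\cdotroomy\sigma)}_{\lp\to\llq}$ is essentially tautological: for $R\in\cd$ and $f\in L^\infty_C(R,\sigma)$, positivity of the coefficients $\lambda_Q$ yields $T_{\lambda,R}(f\sigma)\leq T_\lambda(f1_R\sigma)$ in the lattice of $D$, and combining this with $\norm{f1_R}_\lp\leq\norm{f}_{L^\infty_C(R,\sigma)}\sigma(R)^{1/p}$ converts \eqref{eq_norminequality} into \eqref{eq_testing_direct}; the case of \eqref{eq_testing_dual} is symmetric. For the upper bound I dualize: by the duality of Lebesgue--Bochner spaces it suffices to bound
\begin{equation*}
\angles{T_\lambda(f\sigma),g\omega}=\sum_{Q\in\cd}\angles{\lambda_Q\avs{f}_Q,\avo{g}_Q}_{D,D^*}\sigma(Q)\omega(Q)
\end{equation*}
by $\norm{f}_\lp\norm{g}_\lqp$, with $f,g\geq 0$ of unit norm (the positivity of $\lambda_Q$ reduces the estimate to nonnegative functions).

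The core of the argument is the parallel stopping construction. I build a family $\cf\subseteq\cd$ of stopping cubes adapted to $(f,\sigma)$ such that for every $Q$ with $\pif(Q)=F$ the vector-valued average $\avs{f}_Q$ is dominated (up to a fixed constant) by $\avs{f}_F$ in the lattice order of $C$, and analogously a family $\cg$ for $(g,\omega)$ in $D^*$ with stopping parent $\pig(Q)$. The dyadic Hardy--Littlewood property of $C$ and of $D^*$, realized through the lattice maximal operator $\mbar$, then yields Carleson packing estimates of the form
\begin{equation*}
\sum_{F\in\cf}\norm{\avs{f}_F}_C^{p}\sigma(F)\lesssim\norm{\mbar}_{\lp\to\lp}^{p}\norm{f}_\lp^{p},\qquad \sum_{G\in\cg}\norm{\avo{g}_G}_{D^*}^{q'}\omega(G)\lesssim\norm{\mbar}_{\lqp\to\lqp}^{q'}\norm{g}_\lqp^{q'}.
\end{equation*}
I then decompose the bilinear form according to the pair $(F,G)=(\pif(Q),\pig(Q))$. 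Since each nonzero term requires $Q\subseteq F\cap G$, the pair $(F,G)$ is always nested, and I split into the cases $F\subseteq G$ and $G\subsetneq F$.

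In the case $F\subseteq G$, the $\cg$-stopping property lets each $\avo{g}_Q$ be replaced by a single vector $g_G^*\in D^*$ of norm $\approx\norm{\avo{g}_G}_{D^*}$. The remaining sum over $Q$ with $\pif(Q)=F$ assembles, after using $\avs{f}_Q\leq K\avs{f}_F$, into a quantity controlled by $\norm{T_{\lambda,F}(\tilde f_F\sigma)}_{\llq}$ for some $\tilde f_F\in L^\infty_C(F,\sigma)$ of norm $\approx\norm{\avs{f}_F}_C$; the direct testing condition \eqref{eq_testing_direct} then contributes a factor $\mathfrak{T}\norm{\avs{f}_F}_C\sigma(F)^{1/p}\norm{\avo{g}_G}_{D^*}\omega(G)^{1/q'}$ per nested pair. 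The case $G\subsetneq F$ is symmetric, producing the $\mathfrak{T}^*$-terms. Summing over nested pairs via H\"older's inequality and the Carleson packing estimates above delivers the stated bound. The main obstacle is the vector-valued nature of $\avs{f}_Q\in C$ and $\avo{g}_Q\in D^*$: in the scalar setting these pass freely through the bilinear pairing, but here the stopping substitution must be formulated in the lattice order, and the Carleson packing must be reduced from a lattice-valued to a scalar estimate. The dyadic Hardy--Littlewood property of $C$ and $D^*$ is precisely what enables this reduction, and it accounts for the constants $\norm{\mbar}_{\lp\to\lp}$ and $\norm{\mbar}_{\lqp\to\lqp}$ that appear in the final estimate.
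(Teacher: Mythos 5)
Your overall scheme --- dualize, build parallel stopping families $\cf$ for $f$ and $\cg$ for $g$, split the bilinear form according to the nesting of the stopping parents, absorb the inner sums by the testing conditions, and close with a Carleson packing argument --- is the same as the paper's proof, which follows Hyt\"onen's parallel-corona approach. However, your description of the stopping families and of the packing estimate misses the one genuinely vector-valued point, and as written the construction does not deliver what the argument needs.

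You claim that $\cf$ is built so that $\avs{f}_Q$ is dominated by a fixed constant times $\avs{f}_F$ in the lattice order of $C$ for every $Q$ with $\pif(Q)=F$, and you assert the packing estimate $\sum_{F\in\cf}\abs{\avs{f}_F}_C^{p}\,\sigma(F)\lesssim\norm{\mbar}_{L^p_C\to L^p_C}^{p}\norm{f}_\lp^{p}$. Neither is correct. Lattice-order domination of $\avs{f}_Q$ by $\avs{f}_F$ is not what a stopping construction yields: the auxiliary function one must insert into the testing condition \eqref{eq_testing_direct} is the lattice supremum $f_F=\sup_{\pif(Q)=F}\avs{f}_Q 1_Q$, and in a genuine lattice (say $\ell^s$) the averages at different scales can point in mutually incomparable directions, so $\norm{f_F}_{L^\infty_C}$ can be much larger than $\abs{\avs{f}_F}_C$. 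Moreover, your packing estimate as written does not use the Hardy--Littlewood property at all: since $\abs{\avs{f}_F}_C\leq\angles{\abs{f}_C}^\sigma_F$, the Carleson embedding theorem alone already gives $\sum_{F\in\cf}\abs{\avs{f}_F}_C^p\sigma(F)\lesssim_p\norm{f}_\lp^p$ with no maximal constant. Where the Hardy--Littlewood property is actually indispensable is in controlling $\norm{f_F}_{L^\infty_C}$, and this is why the paper's stopping condition \eqref{eq_strongstoppingcondition} involves the \emph{norm of the lattice maximal function}: one stops when $\abs{\sup_{Q\supseteq F'}\avs{f}_Q}_C>2\angles{\abs{\bar{M}^\sigma f}_C}^\sigma_F$, which simultaneously yields sparseness of $\cf$ and the bound $\norm{f_F}_{L^\infty_C}\leq 2\angles{\abs{\bar{M}^\sigma f}_C}^\sigma_F$ (Lemma \ref{lem_stoppingfamily}). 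The correct packing sum is $\sum_{F\in\cf}\norm{f_F}_{L^\infty_C}^p\sigma(F)\lesssim\norm{\bar{M}^\sigma f}_\lp^p$, and only at this stage do Theorem \ref{thm_universalbound} and the Hardy--Littlewood hypothesis convert $\norm{\bar{M}^\sigma}_{\lp\to\lp}$ into $\norm{\mbar}_{L^p_C\to L^p_C}$. Finally, the substitution on the $g$-side is not by a single vector $g_G^*\in D^*$ constant on $G$: the paper replaces $g$ inside $\int_Q g\domega$ by the \emph{function} $g_G=\sum_{G'\in\chg(G)}\avo{g}_{G'}1_{G'}+g1_{\eg(G)}$, which reproduces $\int_Q g\domega$ exactly for every $Q$ with $\pig(Q)=G$ (a constant vector would fail this), and whose packing is handled by Lemmas \ref{lem_pythagoras} and \ref{lem_decomposition}.
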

We note that, in the real-valued case (that is, $C=D=\br$), the $L^\infty$ testing conditions \eqref{eq_linftytestingconditions} can be rephrased as the Sawyer testing conditions:
\begin{equation}\label{eq_sawyer testing}
\norm{T_R(1_R\sigma)}_{L^q(\omega)}\lesssim \sigma(R)^{1/p},\quad \text{ and }\quad \norm{T^*_R(1_R\omega)}_{L^{p'}(\sigma)}\lesssim \omega(R)^{1/{q'}}.
\end{equation}
Such testing conditions were used by Sawyer \cite{sawyer1988} to characterize the boundedness of a large class of integral operators $I(\,\cdot\,\sigma):L^p(\sigma)\to L^q(\omega)$ with non-negative kernels, in particular, fractional integrals and Poisson integrals. 
In the real-valued case $T(\cdotroomy\sigma):L^p(\sigma)\to L^q(\omega)$, Theorem \ref{thm_twoweight} was first proven 
\begin{itemize}
\item for $p=q=2$ by Nazarov, Treil, and Volberg  \cite{nazarov1999} by the Bellman function technique,  
\item and for $1<p\leq q<\infty$ by Lacey, Sawyer, and Uriarte-Tuero \cite{lacey2009} by techniques that are similar to the ones used by Sawyer \cite{sawyer1988};
\end{itemize}
Alternative proofs were obtained
\begin{itemize}
\item by Treil \cite{treil2012} by splitting the summation over dyadic cubes in the dual pairing by the condition \lq$\sigma(Q)(\avs{f}_Q)^p>\omega(Q)(\avo{g}_Q)^{q'}$\rq, 
\item and by  
Hyt\"onen \cite{hytonen2012} by splitting the summation by using parallel stopping cubes. This technique originates from the work of Lacey, Sawyer, Shen, and Uriarte-Tuero \cite[Version 1]{lacey2012} on the two-weight boundedness of the Hilbert transform.  
\end{itemize}
For an exponent $s\in(1,\infty)$, and a collection $\{\beta_Q\}_{Q\in\cd}$ of non-negative real numbers, consider the particular vector-valued case $L^p(\sigma)\to L^q_{\ell^s(\cd)}(\omega)$, and the particular class of operators $T_{\lambda_\beta}(\cdotroomy \sigma)$ defined by
\begin{equation}\label{eq_operator_particular}
T_{\lambda_\beta}(f\sigma):=\{ \beta_Q \int_Q f \dsigma 1_Q \}_{Q\in\cd}.
\end{equation}
(We note that this is the operator \eqref{eq_operator} associated with the following coefficients: For each $Q\in\cd$, for every $r\in\br$, the sequence $\lambda_{\beta,Q}r\in\ell^s(\cd)$ is componentwise defined by setting $(\lambda_{\beta,Q} r)_{R}:=\delta_{Q,R} \beta_Q r$ for every $ R\in\cd$.) In this case, Theorem \ref{thm_twoweight} was proven
\begin{itemize}
\item by Scurry \cite{scurry2010} by adapting Lacey, Sawyer, and Uriarte-Tuero's {\cite{lacey2009}} proof of the real-valued case $T(\cdotroomy\sigma):L^p(\sigma)\to L^q(\omega)$.
\end{itemize}
In this paper, the characterization by the $L^\infty$ testing conditions  is extended to Banach lattices with the Hardy--Littlewood property. Note that this generality also has the advantage of being symmetric with respect to $T$ and $T^*$, which simplifies the notation.

We prove Theorem \ref{thm_twoweight} by using parallel stopping cubes, similarly as in Hyt\"onen's \cite{hytonen2012} proof of the real-valued case $L^p(\sigma)\to L^q(\omega)$ of the theorem. However, because of the vector-valuedness, we need to choose the stopping cubes  by a different stopping condition: Let $\mu$ be a locally finite Borel measure, and let  $(E,\abs{\cdotroomy}_E,\leq)$ be a Banach lattice. For each dyadic cube $F$, its stopping children $\chf(F)$ are defined as the maximal dyadic cubes $F'\subsetneq F$ such that
\begin{equation}\label{eq_strongstoppingcondition}
\abs{\sup_{\substack{Q\in\cd:\\Q\supseteq F'}}\angles{f}^\mu_Q}_E>2 \angles{\abs{ \sup_{Q\in\cd}\angles{f}^\mu_Q 1_Q}_E}^\mu_F,
\end{equation}
where the supremum is taken with respect to the order of the lattice. 

Note that, in the right-hand side of the stopping condition \eqref{eq_strongstoppingcondition}, there appears the  dyadic lattice Hardy--Littlewood maximal function $
\bar{M}^\mu_\cd f$, which is defined by $\bar{M}^\mu_\cd f:=\sup_{Q\in\cd} \angles{f}^\mu_Q 1_Q.
$
To control the averages appearing in the stopping condition \eqref{eq_strongstoppingcondition}, we assume that the operator $\bar{M}^\mu: L^p_E(\mu)\to L^p_E(\mu)$ is bounded. However, we want to obtain an estimate for the operator norm of the operator $T(\cdotroomy \sigma):L^p_C(\sigma)\to L^q_D(\omega)$ such that the estimate depends on the measures $\sigma$ and $\omega$ only via the testing contants. 
In particular, we do not want the estimate to depend on the measure $\sigma$ via the operator norm of the auxiliary operator $\bar{M}^\sigma: L^p_C(\sigma)\to L^p_C(\sigma)$. 
Thus, we want to view the boundedness of $\bar{M}^\sigma: L^p_E(\sigma)\to L^p_E(\sigma)$ as a consequence of the geometry of the Banach lattice $E$ itself, which we can do, thanks to the following theorem:

\begin{theorem}[Universal norm bound for the dyadic lattice Hardy--Littlewood maximal operator, \cite{nazarov1996} and \cite{kemppainen2011}]\label{thm_universalbound}Let $1<p<\infty$. Assume that $(E, \abs{\cdotroomy}_E,\leq )$ is a Banach lattice. Then
\begin{equation*}
\norm{\bar{M}^\mu}_{L^p_E(\mu)\to L^p_E(\mu)}\lesssim_p \norm{\bar{M}}_{L^p_E\to L^p_E}
\end{equation*}
for all locally finite Borel measures $\mu$. 
\end{theorem}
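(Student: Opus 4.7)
The plan is a principal-cubes decomposition adapted to $\mu$ combined with a transfer of each local contribution to the Lebesgue setting, where the Hardy--Littlewood hypothesis on $E$ applies. Since $\cd$ is finite (by the shorthand convention), by monotone convergence and density it suffices to treat a positive simple function $f:\rn\to E_+$ supported in a fixed large cube $F_0\in\cd$.

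First, construct principal cubes $\cf$ by the strong stopping condition: set $\cf_0:=\{F_0\}$, and for $F\in\cf$, let the stopping children $\chf(F)$ be the maximal dyadic cubes $F'\subsetneq F$ with
\[
\abs{\sup_{Q\in\cd:\,Q\supseteq F'}\angles{f}^\mu_Q}_E>2\angles{\abs{\mbar^\mu f}_E}^\mu_F;
\]
set $\cf:=\bigcup_n\cf_n$ and $E_F:=F\setminus\bigcup_{F'\in\chf(F)}F'$. Chebyshev applied to the scalar function $\abs{\mbar^\mu f}_E$ gives the Carleson packing $\sum_{F'\in\chf(F)}\mu(F')\leq\mu(F)/2$, hence $\mu(E_F)\geq\mu(F)/2$. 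By maximality of the stopping, for every $x\in E_F$ we obtain the \emph{lattice}-supremum bound $\abs{\sup_{Q:\,x\in Q\subseteq F}\angles{f}^\mu_Q}_E\leq 2\angles{\abs{\mbar^\mu f}_E}^\mu_F$; contributions from ancestors $Q\supseteq F$ are absorbed recursively along the parent tree of $\cf$.

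Summing over $F\in\cf$ and using sparsity via the scalar Carleson embedding yields the intermediate bound
\[
\norm{\mbar^\mu f}^p_{L^p_E(\mu)}\lesssim_p\sum_{F\in\cf}\bigl(\angles{\abs{\mbar^\mu f}_E}^\mu_F\bigr)^p\mu(F)\lesssim_p\norm{\abs{\mbar^\mu f}_E}^p_{L^p(\mu)}.
\]
To close the argument, control the scalar $L^p(\mu)$-norm on the right in terms of $\norm{f}_{L^p_E(\mu)}$ by invoking the Lebesgue-case Hardy--Littlewood hypothesis on $E$: within each principal cube $F\in\cf$, transfer $\mu|_F$ to Lebesgue measure via a dyadic-structure-preserving measure-preserving construction, apply $\norm{\mbar}_{L^p_E\to L^p_E}\leq C_{p,E}$ to the transported function, and assemble the local bounds.

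The main obstacle is this transfer step: the hypothesis is stated for a dyadic grid with Lebesgue measure, yet must be applied in the $\mu$-setting. In dimension one a monotone rearrangement handles this globally; in higher dimensions no canonical global rearrangement preserves the dyadic structure, but the principal-cubes decomposition localizes the question to a single cube at a time, where a local transfer does exist. The remaining technical content is tracking how the constants assemble across the principal-cubes tree so that the final estimate depends on $\mu$ only through the right-hand side $\norm{\mbar}_{L^p_E\to L^p_E}$ and not through any auxiliary $\mu$-dependent operator norm.
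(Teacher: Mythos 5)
Your proposal is circular at its core. The far-right side of your displayed intermediate bound, $\norm{\abs{\mbar^\mu f}_E}^p_{L^p(\mu)}$, is \emph{identical} (not merely comparable) to the far-left side $\norm{\mbar^\mu f}^p_{L^p_E(\mu)}$ --- both equal $\int\abs{\mbar^\mu f}_E^p\dmu$ --- so the chain records no progress. This is forced by the stopping condition you chose: because it references $\angles{\abs{\mbar^\mu f}_E}^\mu_F$, the decomposition is self-referential. It yields a pointwise bound of $\abs{\mbar^\mu f}_E$ on $E_F$ by $\angles{\abs{\mbar^\mu f}_E}^\mu_F$, and scalar Carleson embedding then returns the starting quantity. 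That stopping condition is genuinely useful in the paper for controlling a \emph{different} operator $T$ while treating $\bar{M}^\mu$ as a known black box, but it contributes nothing to bounding $\bar{M}^\mu$ itself. Switching to the Muckenhoupt--Wheeden condition $\angles{\abs{f}_E}^\mu_{F'}>2\angles{\abs{f}_E}^\mu_F$ does not help either: then the $L^\infty_E$ bound for $\sup_{\pif(Q)=F}\angles{f}^\mu_Q 1_Q$ goes through the $L^1_E(\mu)\to L^{1,\infty}_E(\mu)$ norm of $\bar{M}^\mu$, which is precisely what is not yet available.

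The whole weight of your argument therefore rests on the ``transfer $\mu|_F$ to Lebesgue measure'' step, and that step is not merely unproven but, as stated, does not exist. Inside a single cube $F$, the dyadic tree under $\mu$ splits parent mass among children in arbitrary ratios, while the Lebesgue-dyadic tree splits equally; there is no tree-structure-preserving, measure-preserving bijection between the two in $d\geq 2$, and even in $d=1$ a monotone rearrangement sends the $\mu$-balanced (not the standard) dyadic system onto the Lebesgue dyadic system. What is actually required is a mechanism that upgrades Lebesgue-measure boundedness to a statement stable under \emph{arbitrary} mass splitting, and that is exactly what the paper's Bellman-function argument supplies: boundedness on $L^p_E(\br)$ yields a midpoint-concave function $B(f,F,L)$ (Proposition~\ref{prop_boundednessimpliesbellman}), and since concavity applies with the convex weights $\mu(Q')/\mu(Q)$ for any $\mu$, iterating the Bellman inequality down the dyadic tree of $(\rn,\mu)$ closes the bound (Proposition~\ref{prop_existenceimpliesboundedness}). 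No stopping-cube decomposition can substitute for that concavity/iteration step, and your outline omits it entirely.
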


\begin{remark}This theorem  follows from either the technique \cite{nazarov1996} or, as communicated to the author by M. Kemppainen, the technique \cite{kemppainen2011}. For reader's convenience, the proof is presented in Section \ref{sec_universalnormbound}. 
\end{remark}

Thus, it is the proof technique of stopping cubes, in particular, the stopping condition \eqref{eq_strongstoppingcondition}, that leads us to consider the class of Banach lattices that have the Hardy--Littlewood property. The author is unaware of whether the statement, the characterization of the two-weight boundedness by the $L^\infty$ testing conditions, holds  without assuming the Hardy--Littlewood property (see Question \ref{question_additionalassumption}).

Next, we characterize the $L^p_C(\sigma)\to L^q_D(\omega)$ boundedness of the operator $T_\lambda(\cdotroomy\sigma)$ in the case that the measures $\sigma$ and $\omega$ satisfy the $A_\infty$ condition with respect to each other. In particular, this includes the unweighted case $\sigma=\omega=\mu$. By duality, the norm inequality \eqref{eq_norminequality} is equivalent to the bilinear norm inequality
\begin{equation}\label{eq_dualpairingnorminequality}
\int g T(f\sigma) \domega \lesssim \norm{f}_{\lp} \norm{g}_{\lqp}.
\end{equation}
Again, by restricting the class of functions and by localizing the operator, we obtain {\it the $L^\infty$ dual pairing testing condition}:
\begin{equation}\label{eq_dualpairingtestingcondition}
\int g T_R(f\sigma) \domega \leq \mathfrak{B}\norm{f}_{L^\infty_C(R,\sigma)} \norm{g}_{L^\infty_{D^*}(R,\omega)}\sigma(R)^{1/p} \omega(R)^{1/{q'}}
\end{equation}
for every $R\in\cd$, every $g\in L^\infty_{D^*}(\omega,R)$, and every $f\in L^\infty_C(\sigma,R)$.
The  {\it $A_\infty$ characteristic} $[\sigma]_{A_\infty(\omega)}$ of a measure $\sigma$ with respect to a measure $\omega$ is defined by
\begin{equation}\label{eq_ainfinitycharacteristic}
[\sigma]_{A_\infty(\omega)}:= \sup_{R\in\cd} \frac{1}{\sigma(R)} \int M_R^\omega(\sigma) \domega,
\end{equation}
where, for each $R\in\cd$, the localized Hardy--Littlewood maximal operator $M^\omega_R$ is defined by
$
M^\omega_R(\sigma):=\sup_{\substack{Q\in\cd:\\Q\subseteq R}} \frac{\sigma(Q)}{\omega(Q)} 1_Q.
$

\begin{theorem}[Norm inequality for $A_\infty$ weights is characterized by the $L^\infty$ dual pairing testing condition]\label{thm_dualpairingtesting}

In addition to the assumptions of Theorem \ref{thm_twoweight}, assume that the measures $\sigma$ and $\omega$ satisfy the $A_\infty$ condition with respect to each other. 
Then 
$$
\mathfrak{B}\leq \norm{T_\lambda(\,\cdot\,\sigma)}_{L^p_C(\sigma)\to L^q_D(\omega)} \lesssim_{p,q} \norm{\bar{M}}_{L^p_C\to L^p_C} \norm{\bar{M}}_{L^{q'}_{D^*}\to L^{q'}_{D^*}} \big([\sigma]_{A_\infty(\omega)}^{1/p} +[\omega]_{A_\infty(\sigma)}^{1/{q'}}\big) \mathfrak{B},
$$
where the dual pairing testing constant $\mathfrak{B}$ is the least constant in the dual pairing testing condition \eqref{eq_dualpairingtestingcondition}.
Here, the $A_\infty$ characteristics are defined as in \eqref{eq_ainfinitycharacteristic},
and $\norm{\bar{M}}_{L^p_C\to L^p_C}$ denotes the norm of the dyadic lattice Hardy--Littlewood maximal function $\bar{M}:L^p_C\to L^p_C$.
\end{theorem}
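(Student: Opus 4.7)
The lower bound $\mathfrak{B}\leq \norm{T_\lambda(\,\cdot\,\sigma)}_{\lp\to\llq}$ is immediate from Hölder's inequality, since any $f$ supported in $R$ satisfies $\norm{f}_{\lp}\leq \norm{f}_{L^\infty_C(R,\sigma)}\sigma(R)^{1/p}$, and analogously for $g$ against $T^*$. For the upper bound, my plan is to estimate the dual pairing directly by means of two parallel stopping families, in the style of the proof of Theorem~\ref{thm_twoweight} but using the bilinear testing condition \eqref{eq_dualpairingtestingcondition} on each stopping block. By positivity of $\lambda_Q$ together with the Kantorovich estimate $|\lambda_Q c|\leq \lambda_Q|c|$, I may assume $f,g\geq 0$ in the lattice order and expand
\[
\int g\,T_\lambda(f\sigma)\,\domega=\sum_{Q\in\cd}\omega(Q)\sigma(Q)\,\angles{\angles{g}^\omega_Q,\,\lambda_Q\angles{f}^\sigma_Q}.
\]

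Construct $\mathcal{F}\subseteq\cd$ as stopping cubes for $g$ with respect to $\omega$ (using the $D^*$-valued lattice maximal function in the stopping condition \eqref{eq_strongstoppingcondition}) and $\mathcal{G}\subseteq\cd$ as stopping cubes for $f$ with respect to $\sigma$ (using the $C$-valued lattice maximal function), and write $\pif(Q),\pig(Q)$ for the minimal stopping ancestors of $Q$. Partition the sum over $Q$ according to the pair $(\pif(Q),\pig(Q))=(F,G)$; by dyadic nesting only pairs with $F\subseteq G$ or $G\subsetneq F$ contribute. On each block the stopping condition forces $|\angles{g}^\omega_Q|_{D^*}\lesssim \beta_F:=\angles{|\bar{M}^\omega g|_{D^*}}^\omega_F$ and $|\angles{f}^\sigma_Q|_C\lesssim \alpha_G:=\angles{|\bar{M}^\sigma f|_C}^\sigma_G$. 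Replacing the vector-valued averages by constant-direction test functions of these sizes and invoking positivity of $\lambda_Q$ once more to reduce to a scalar bilinear estimate, the dual pairing testing condition \eqref{eq_dualpairingtestingcondition} applied on the smaller of $F,G$ gives
\[
\sum_{Q:\,(\pif(Q),\pig(Q))=(F,G)}\omega(Q)\sigma(Q)\,\angles{\angles{g}^\omega_Q,\,\lambda_Q\angles{f}^\sigma_Q}\lesssim \mathfrak{B}\,\alpha_G\,\beta_F\,\sigma(F\cap G)^{1/p}\,\omega(F\cap G)^{1/q'}.
\]

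It remains to sum over pairs, splitting into the cases $F\subseteq G$ and $G\subsetneq F$. In the first case, fix $F\in\mathcal{F}$; by the geometric decrease of $\alpha_G$ along the stopping tree $\mathcal{G}$, the sum $\sum_{G\in\mathcal{G},\,G\supseteq F}\alpha_G$ is dominated by a constant multiple of $\alpha_{\pig(F)}$, so the remaining $F$-sum has the form $\sum_{F\in\mathcal{F}}\beta_F\,\alpha_{\pig(F)}\,\sigma(F)^{1/p}\omega(F)^{1/q'}$. Apply Hölder's inequality in this $F$-sum so that the factor $(\sum_F\sigma(F))^{1/p}$ is absorbed via the $A_\infty$ Carleson embedding into $[\sigma]_{A_\infty(\omega)}^{1/p}\sigma(R)^{1/p}$, while the remaining factor is controlled by two iterations of the vector-valued Carleson embedding against the sparse stopping families $\mathcal{F},\mathcal{G}$, yielding $\norm{\bar{M}^\sigma f}_{L^p_C(\sigma)}\norm{\bar{M}^\omega g}_{L^{q'}_{D^*}(\omega)}$. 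Theorem~\ref{thm_universalbound} then converts these into $\norm{\bar{M}}_{L^p_C\to L^p_C}\norm{f}_{\lp}$ and $\norm{\bar{M}}_{L^{q'}_{D^*}\to L^{q'}_{D^*}}\norm{g}_{\lqp}$. The case $G\subsetneq F$ is handled symmetrically and produces the contribution with $[\omega]_{A_\infty(\sigma)}^{1/q'}$ in place of $[\sigma]_{A_\infty(\omega)}^{1/p}$.

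I expect the main obstacle to lie in the asymmetric Hölder split of the product $\sigma(F\cap G)^{1/p}\omega(F\cap G)^{1/q'}$: because $p\leq q$ forces $1/p+1/q'\geq 1$, the factors must be distributed so that in each of the two cases exactly one $A_\infty$ characteristic is activated by a geometric $A_\infty$ Carleson embedding and the remaining factor is absorbed by the vector-valued Carleson embedding controlled through the lattice maximal operator. Executing this bookkeeping so that the measure-dependent maximal operator norms disappear via Theorem~\ref{thm_universalbound} in favour of the universal constants $\norm{\bar{M}}_{L^p_C\to L^p_C}$ and $\norm{\bar{M}}_{L^{q'}_{D^*}\to L^{q'}_{D^*}}$ is the technical heart of the argument.
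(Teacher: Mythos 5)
Your plan is essentially the same as the paper's proof: the same parallel stopping families defined by the lattice-supremum stopping condition \eqref{eq_strongstoppingcondition}, the same block decomposition by $(\pif(Q),\pig(Q))$, the same application of the dual pairing testing condition \eqref{eq_dualpairingtestingcondition} on the smaller stopping cube of each block, then H\"older with exponents $p$ and $q'$, the $A_\infty$–Carleson equivalence (Lemma \ref{lem_ainfinitycarleson}) to absorb $\sum_{F:\pig(F)=G}\sigma(F)\lesssim [\sigma]_{A_\infty(\omega)}\sigma(G)$, the Carleson embedding (Lemma \ref{lem_carlesonembedding}), and Theorem \ref{thm_universalbound}. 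The block estimate you write and the final bookkeeping are correct.

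Two remarks on the exposition. First, the phrase \emph{``replacing the vector-valued averages by constant-direction test functions of these sizes\ldots to reduce to a scalar bilinear estimate''} is not what the mechanism actually is, and taken literally it would fail: the lattice order is not determined by the norm, so a bound $|\angles{f}^\sigma_Q|_C\lesssim \alpha_G$ does \emph{not} give $\angles{f}^\sigma_Q\leq \alpha_G\,c_0$ for any fixed $c_0\in C_+$, and the testing condition is not applied to scalars. What actually works (and what the paper does) is to replace $f$ by the genuinely $C$-valued lattice-supremum auxiliary function $f_G:=\sup_{\pig(Q)=G}\angles{f}^\sigma_Q 1_Q$ (and $g$ by its $D^*$-valued analogue), use positivity of $\lambda_Q$ to get $\int_Q f\,\dsigma\leq \int_Q f_G\,\dsigma$ for each $Q$ with $\pig(Q)=G$, enlarge the $Q$-sum to all $Q\subseteq F\cap G$, and only then invoke the vector-valued testing condition with the controlled bound $\norm{f_G}_{L^\infty_C}\lesssim \alpha_G$. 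Second, the ``geometric decrease'' argument for $\sum_{G\supseteq F}\alpha_G\lesssim \alpha_{\pig(F)}$ is a detour: for each $F$ the only $G$ with $F\subseteq G$ that can appear as $\pig(Q)$ when $\pif(Q)=F$ is $G=\pig(F)$ itself (by the dyadic-nesting observation the paper records in step ii) of \eqref{eq_rearrangement}), so the block sum is a single term.
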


We observe that the $L^\infty$ dual pairing testing condition \eqref{eq_dualpairingtestingcondition} for $T_\lambda(\cdotroomy \mu):L^p_C(\mu)\to L^p_D(\mu)$ is independent of $p$. Therefore:
\begin{corollary}Assume that $C$ and $D^*$ each have the Hardy--Littlewood property. Then, the operator $T_\lambda(\cdotroomy \mu):L^p_C(\mu)\to L^p_D(\mu)$ is bounded for some $p\in(1,\infty)$ if and only if it is bounded for every $p\in(1,\infty)$.
\end{corollary}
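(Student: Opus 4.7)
The plan is to obtain the corollary as an essentially formal consequence of Theorem \ref{thm_dualpairingtesting}, applied in the unweighted case $\sigma=\omega=\mu$ and with $p=q$, together with the observation made just before the statement: the $L^\infty$ dual pairing testing condition \eqref{eq_dualpairingtestingcondition} is independent of $p$ in this setting. Concretely, setting $\sigma=\omega=\mu$ and $q=p$ in \eqref{eq_dualpairingtestingcondition}, the right-hand side collapses to
\[
\mathfrak{B}\,\norm{f}_{L^\infty_C(R,\mu)}\,\norm{g}_{L^\infty_{D^*}(R,\mu)}\,\mu(R)^{1/p+1/p'}=\mathfrak{B}\,\norm{f}_{L^\infty_C(R,\mu)}\,\norm{g}_{L^\infty_{D^*}(R,\mu)}\,\mu(R),
\]
and no $p$ appears on either side. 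I would denote by $\mathfrak{B}$ the least such constant and observe that this constant is intrinsically attached to the pair $(T_\lambda,\mu)$, not to any exponent.

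Before using the theorem I would quickly check that all its hypotheses hold in the unweighted case. The $A_\infty$ condition $[\mu]_{A_\infty(\mu)}<\infty$ is trivial, because the localized maximal function $M^\mu_R(\mu)=\sup_{Q\subseteq R}\frac{\mu(Q)}{\mu(Q)}1_Q$ equals $1_R$, so $[\mu]_{A_\infty(\mu)}=\sup_R \mu(R)/\mu(R)=1$. The assumption that $C$ and $D^*$ have the Hardy--Littlewood property, combined with the $p$-independence stated in the Remark after Definition \ref{def_hardylittlewoodproperty}, ensures that the unweighted maximal norms $\norm{\bar M}_{L^p_C\to L^p_C}$ and $\norm{\bar M}_{L^{p'}_{D^*}\to L^{p'}_{D^*}}$ are finite for \emph{every} $p\in(1,\infty)$.

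With these ingredients in place, the argument closes itself. Suppose $T_\lambda(\,\cdot\,\mu)\colon L^{p_0}_C(\mu)\to L^{p_0}_D(\mu)$ is bounded for some $p_0\in(1,\infty)$. The lower bound in Theorem \ref{thm_dualpairingtesting}, applied with $p=q=p_0$, gives $\mathfrak{B}\le \norm{T_\lambda(\,\cdot\,\mu)}_{L^{p_0}_C(\mu)\to L^{p_0}_D(\mu)}<\infty$. Since the condition that defines $\mathfrak{B}$ does not involve $p$, the same finite $\mathfrak{B}$ serves as the dual pairing testing constant for every exponent. Applying the upper bound in Theorem \ref{thm_dualpairingtesting} now with $p=q$ arbitrary in $(1,\infty)$ yields
\[
\norm{T_\lambda(\,\cdot\,\mu)}_{L^p_C(\mu)\to L^p_D(\mu)}\lesssim_p \norm{\bar M}_{L^p_C\to L^p_C}\,\norm{\bar M}_{L^{p'}_{D^*}\to L^{p'}_{D^*}}\,\mathfrak{B}<\infty,
\]
which is the desired conclusion. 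I do not anticipate any genuine obstacle: all the work is already inside Theorem \ref{thm_dualpairingtesting}, and the corollary amounts to reading off the $p$-invariance of the right-hand side of \eqref{eq_dualpairingtestingcondition} when $\sigma=\omega$ and $p=q$.
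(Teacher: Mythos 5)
Your proof is correct and follows exactly the route the paper intends: the corollary in the paper is stated as an immediate consequence of the observation, made in the preceding sentence, that the dual pairing testing condition \eqref{eq_dualpairingtestingcondition} is $p$-independent when $\sigma=\omega=\mu$ and $q=p$, combined with Theorem \ref{thm_dualpairingtesting}. You have simply spelled out the details (triviality of the $A_\infty$ condition, $p$-independence of the Hardy--Littlewood bounds via the Remark after Definition \ref{def_hardylittlewoodproperty}, reading off the lower and upper bounds of the theorem) that the paper leaves implicit.
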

More corollaries, among which is is an alternative proof for an embedding theorem by Nazarov, Treil, and Volberg \cite[Theorem 3.1]{nazarov2003}, are stated in Section \ref{sec_corollaries}.

Next, we point out that the assumption that the Banach space has the Hardy--Littlewood property can be replaced by assuming that the measure is doubling, or by strenghtening the testing condition. In the unweighted case $T_\lambda(\cdotroomy \mu):L^p_E(\mu)\to L^p_E(\mu)$, this reads as:
\begin{theorem}[$L^\infty$ testing condition together with an additional assumption implies the boundedness]\label{thm_alternativetesting}Let $p\in(1,\infty)$. Let $(E,\abs{\cdotroomy}_E,\leq)$ be a Banach lattice. Let $\mu$ be a locally finite Borel measure. Then, the operator $T_\lambda(\cdotroomy \mu):L^p_E(\mu)\to L^p_E(\mu)$ is bounded if any of the following conditions is satisfied:
\begin{itemize}
\item[i)]The operator $T_\lambda(\cdotroomy)$ satisfies the endpoint direct $L^\infty$ testing condition:
\begin{equation}
\label{eq_testing_endpointdirect}
\norm{T_R(f\mu)}_{L^1_E(\mu)}\leq \mathfrak{B} \norm{f}_{L^\infty_E(R,\mu)} \mu(R)
\end{equation}
for every $R\in\cd$, and every $f\in L^\infty_E(R,\mu)$, and, additionally, the Banach lattice $E$ has the Hardy--Littlewood property.
\item[ii)]The operator $T_\lambda(\cdotroomy)$ satisfies the endpoint direct $L^\infty$ testing condition \eqref{eq_testing_endpointdirect}, and, additionally, the measure $\mu$ is doubling.
\item[iii)]The operator $T_\lambda(\cdotroomy)$ satisfies,  for some  $t\in(p,\infty)$, the endpoint direct $L^t$ testing condition: 
\begin{equation}\label{eq_ltendpointtestingcondition}
\norm{T_R(f\mu)}_{L^1_E(\mu)} \leq \mathfrak{B}_{t} \norm{f}_{L^t_E(\mu,R)} \mu(R)^{1-1/t}
\end{equation} 
for every $R\in\cd$ and every $f\in L^t_E(R,\mu)$. 
\end{itemize}
\end{theorem}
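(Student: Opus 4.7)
The strategy is common to the three cases: we aim to establish the bilinear estimate
\[
\Big|\int\langle g,T_\lambda(f\mu)\rangle\dmu\Big|\lesssim\norm{f}_{L^p_E(\mu)}\norm{g}_{L^{p'}_{E^*}(\mu)},
\]
which by duality is equivalent to the claimed boundedness. In the unweighted setting one has $[\mu]_{A_\infty(\mu)}=1$, since $M^\mu_R(\mu)=1_R$, so the $A_\infty$ factor from Theorem \ref{thm_dualpairingtesting} disappears. Moreover, each endpoint direct testing condition implies a dual pairing testing condition in the sense of \eqref{eq_dualpairingtestingcondition}: from the pointwise bound $|\langle g(x),T_{\lambda,R}(f\mu)(x)\rangle|\leq|g(x)|_{E^*}\,|T_{\lambda,R}(f\mu)(x)|_E$ and H\"older's inequality,
\[
\int\langle g,T_{\lambda,R}(f\mu)\rangle\dmu\leq\norm{g}_{L^\infty_{E^*}(R,\mu)}\,\norm{T_{\lambda,R}(f\mu)}_{L^1_E(\mu)},
\]
and then the direct testing assumption bounds the second factor.

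For case (i), the HL property of $E$ is available; the display above is precisely the dual pairing testing condition in the unweighted case, and the parallel stopping-cube proof of Theorem \ref{thm_dualpairingtesting} then applies with only the HL property of $E$ being genuinely used (the dual side is handled through the $L^\infty$ bound on $g$ and H\"older, bypassing any HL property on $E^*$). For cases (ii) and (iii) the HL property is unavailable, so we cannot invoke Theorem \ref{thm_dualpairingtesting} as a black box; the plan is to mirror its stopping-cube proof but with the \emph{lattice} stopping condition \eqref{eq_strongstoppingcondition} replaced by the \emph{scalar} stopping condition: the stopping children of $F$ are the maximal cubes $F'\subsetneq F$ with $\langle|f|_E\rangle^\mu_{F'}>2\langle|f|_E\rangle^\mu_F$, and symmetrically for $g$ using $|g|_{E^*}$. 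The pointwise lattice domination $|\langle f\rangle^\mu_Q|_E\leq\langle|f|_E\rangle^\mu_Q$ then controls vector averages by scalar ones on the non-stopping parts of the stopping tree, and only the \emph{scalar} dyadic maximal operator $M^\mu$ acting on the scalar function $|f|_E\in L^p(\mu)$ enters the estimate.

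In case (ii), this scalar maximal operator is bounded on $L^p(\mu)$ precisely because $\mu$ is doubling; the rest of the argument proceeds as in Theorem \ref{thm_dualpairingtesting}, using the $L^\infty$ dual pairing testing derived above. In case (iii), the dyadic scalar maximal operator is bounded on $L^s(\mu)$ for every $s>1$ and any locally finite Borel $\mu$, so no doubling is required; the additional strength of the $L^t$ endpoint testing over the $L^\infty$ one, with $t>p$, compensates for the loss in replacing lattice stopping by scalar stopping, by allowing H\"older with exponent $t/p$ on each stopping cube followed by a Carleson-type summation against $|f|_E^p\in L^1(\mu)$.

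The principal obstacle is the precise re-casting of the lattice stopping-cube argument of Theorem \ref{thm_dualpairingtesting} into a scalar one in (ii) and (iii), with bookkeeping careful enough to preserve the linear dependence on the testing constants $\mathfrak{B}$ (respectively $\mathfrak{B}_t$); the pointwise domination $|\langle f\rangle^\mu_Q|_E\leq\langle|f|_E\rangle^\mu_Q$ is the essential replacement for the HL property in these two cases.
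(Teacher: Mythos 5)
The paper's route is genuinely different from yours and, crucially, simpler in a way that resolves a gap in your proposal.

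\textbf{What the paper does.} It does \emph{not} dualize. The core is Lemma \ref{lemma_existenceofcollection}: one builds a single stopping family $\cf$ (for $f$ only — there is no $g$ and no stopping family for $g$ anywhere) satisfying a replacement rule, an $L^t$ bound on the auxiliary functions $f_F$, an $L^\infty$ bound on $\sum_{\pi_\cf(Q)=F}\lambda_Q\int_Q f\dmu\,1_Q$ in terms of $\langle|T(f\mu)|_E\rangle_F$, and sparseness. One then estimates $\norm{T(f\mu)}_{L^p_E(\mu)}$ directly: $L^p$-Pythagoras (Lemma \ref{lem_pythagoras}) reduces to $\bigl(\sum_F\norm{\cdots}_{L^p}^p\bigr)^{1/p}$, and the elementary interpolation $\norm{h}_{L^p}^p\leq\norm{h}_{L^\infty}^{p-1}\norm{h}_{L^1}$ lets one feed the endpoint $L^t$ testing condition into the $L^1$ factor while the $L^\infty$ factor is controlled by $\langle|T(f\mu)|_E\rangle_F$. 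Carleson embedding then closes the argument with a bootstrap: $\norm{T(f\mu)}_{L^p}\lesssim\norm{T(f\mu)}_{L^p}^{1/p'}(\mathfrak{B}_t\norm{f}_{L^p})^{1/p}$, and one divides out. The $L^\infty$ factor is procured by adding a second stopping condition, D in Table \ref{table_stopping conditionss}, based on the output $T(f\mu)$ itself. The three cases then differ only in which $f$-stopping condition (A, B, or C) one combines with D.

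\textbf{Where your proposal has a gap.} In case (i) you claim the bilinear parallel-stopping argument of Theorem \ref{thm_dualpairingtesting} goes through using only the Hardy--Littlewood property of $E$, the $g$-side being ``handled through the $L^\infty$ bound on $g$ and H\"older.'' But the proof of Theorem \ref{thm_dualpairingtesting} requires a sparse stopping family $\cg$ for $g$, and in the symmetric term $S_{F\subsetneq G}$ (and also in the estimate $\bigl(\sum_G\norm{g_G}_{L^\infty_{E^*}}^{p}\mu(G)\bigr)^{1/p}$) one must bound $\norm{g_G}_{L^\infty_{E^*}}$ by an average of the lattice maximal function of $g$, which is exactly where the Hardy--Littlewood property of $E^*$ enters. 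Replacing the lattice stopping for $g$ by the scalar one (as you propose for cases ii and iii) would require bounding the parent-to-child measure ratio, hence doubling, or an $L^t$ endpoint condition — neither of which is available in case (i). Your argument does not explain how to avoid this. The paper sidesteps it entirely by never introducing $g$.

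\textbf{On cases (ii) and (iii).} Your scalar-stopping idea is morally right for (ii), where the parent-to-child ratio is controlled by doubling. For (iii), however, the paper's mechanism is not ``H\"older with exponent $t/p$ on each stopping cube'': it replaces the auxiliary function by one that averages onto the dyadic \emph{parent} $\hat{F'}$ rather than $F'$ itself, namely $f_F=\sum_{F'}\bigl(\int_{F'}f\dmu\bigr)\frac{1_{\hat{F'}}}{\mu(\hat{F'})}+f1_{\ef(F)}$, whose $L^t$ norm is controlled without any doubling via Lemma \ref{elegantestimate} (L\'opez-S\'anchez, Martell, Parcet). This is a specific structural device that your sketch does not reproduce, and the $L^t$ endpoint testing is then applied directly to $\norm{T_F(f_F\mu)}_{L^1_E(\mu)}$, not in the manner you describe.
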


We remark that the $L^\infty$ testing condition 
has been used to characterize $L^p_E\to L^p_E$ boundedness in at least the following instances: 
\begin{itemize}
\item Let $(E,\abs{\cdotroomy}_E,\leq )$ be a Banach lattice. By using the theory of vector-valued singular integrals, Garc\'ia-Cuerva, Mac\'ias, and Torrea  \cite{torrea1993} proved that {\it the smooth lattice Hardy--Littlewood maximal operator} $\bar{M}_{\varphi,J}:L^p_E\to L^p_E$ is bounded if and only if it satisfies the end-point direct $L^\infty$ testing condition \eqref{eq_testing_endpointdirect}. An alternative proof for this is given in Section \ref{sec_alternativeproofformaximal} by using stopping cubes.
 \item Let $(E,\abs{\cdotroomy}_E)$ be a UMD space. By using stopping cubes, the author and Hyt\"onen \cite{hanninenhytonen2014} proved that {\it the operator-valued dyadic paraproduct} $\Pi_b:L^p_E\to L^p_E$ is bounded if and only if it satisfies the direct $L^\infty$ testing condition \eqref{eq_testing_direct}.
\end{itemize}

We conclude the introduction by comparing the testing conditions. Observe that the direct $L^\infty$ testing condition \eqref{eq_testing_direct} or the dual $L^\infty$ testing condition \eqref{eq_testing_dual} each imply, by H\"older's inequality, the $L^\infty$ dual pairing testing condition \eqref{eq_dualpairingtestingcondition}. Furthermore, the direct $L^t$ testing condition,
\begin{equation}\label{eq_lttestingcondition}
\norm{T_R(f\sigma)}_{L^q_D (\omega)} \leq \mathfrak{T}_{t} \norm{f}_{L^t_C(\sigma,R)} \sigma(R)^{1/p-1/t}
\end{equation}
for every $R\in\cd$, and every $f\in L^t_C(\sigma,R)$, implies, again by H\"older's inequality, the direct $L^\infty$ testing condition \eqref{eq_testing_direct}. Altogether, the testing constants satisfy the comparision:
$$\mathfrak{B}\leq \mathfrak{T}\leq \mathfrak{T}_t\leq \norm{T(\cdotroomy \sigma)}_{L^p(\sigma)\to L^q(\omega)}.$$ 
The $L^\infty$ testing condition \eqref{eq_testing_direct}
can be viewed as the limiting case ($t=\infty)$ of the $L^t$ testing condition \eqref{eq_lttestingcondition}. Furthermore, the $L^\infty$ dual pairing testing condition \eqref{eq_dualpairingtestingcondition} 
is, by duality,  equivalent to {\it the end-point direct $L^\infty$ condition} or {\it the end-point dual $L^\infty$ condition}:
\begin{subequations}
\label{eq_limitingcaseoflinfinity}
\begin{align}
\label{eq_limitingcaseoflinfinitydirect}\norm{T_R(f\sigma)}_{L^1_D(\omega)} &\lesssim \norm{f}_{L^\infty_C(R,\sigma)}\sigma(R)^{1/p}\omega(R)^{1/q'},\;\; \\
\norm{T^*_R(g\omega)}_{L^1_{C^*}(\sigma)}&\lesssim \norm{g}_{L^\infty_{D^*}(R,\omega)} \sigma(R)^{1/p}\omega(R)^{1/q'}.
\end{align}
\end{subequations}
In particular, in the unweighted case $T(\cdotroomy \mu):L^p_C(\mu)\to L^p_D(\mu)$, these conditions can be viewed as the limiting case of the $L^\infty$ testing conditions \eqref{eq_linftytestingconditions}.

\section{Preliminaries}

\subsection{Rudiments of Banach lattices}\label{subsec_rudimentsbanachlattices}
A {\it lattice } $(C,\leq)$ is a set equipped with a partial order relation $\leq$ such that for every $c,d\in C$ there exists the least upper bound $c\vee d $ and the greatest lower bound $c\wedge d$.

\begin{definition}[Banach lattice]
A {\it Banach lattice} $(C,\abs{\cdotroomy}_C,\leq)$ is both a real Banach space $(C,\abs{\cdotroomy}_C)$ and a lattice $(C,\leq)$ so that both structures are compatible: 
\begin{itemize}
\item[i)]$c\leq d$ implies
$c+e\leq d+e$, for every $c,d,e\in C$. 
\item[ii)]$r\geq 0$ and $c\geq 0$ implies $rc\geq 0$, for every  $r\in\br$ and $c\in C$.  
\item[iii)]$\abs{c\,}_C=\abs{\,\abs{c}\,}_C$, and $0\leq c\leq d$ implies $\abs{c\,}_C\leq \abs{d\,}_C$, for every $c,d\in C$. Here, 
the {\it positive part} $c_+$ of a vector $c\in C$ is defined by $c_+:=c\vee 0$, the {\it negative part} $c_-$ by $c_-:=-c\vee 0 $, and the {\it  absolute value} $\abs{c}$ by $\abs{c}:=c\vee -c$. 
\end{itemize}
\end{definition}
From the existence of the pairwise supremum (in other words, the least upper bound), it follows that for every finite set there exists the supremum. This supremum can be computed by taking pairwise suprema and using the recursive formula $\sup \{c_n\}_{n=1}^N=\sup\{c_n\}_{n=1}^{N-1} \vee c_N$.

From the definitions, it follows that $c=c_+-c_-$, and $\abs{c}=c_++c_-$ for every $c\in C$. This splitting implies that, for every linear operator $T:C\to D$ from a Banach lattice $C$ to another $D$, the norm estimate $\abs{Tc\,}_D\lesssim \abs{c}_C$ holds for all $c\in C$ if and only if it holds for all $c\in C$ such that $c\geq 0$. 

The Lebesgue--Bochner space $L^p_C(\sigma)$ associated with a Banach lattice $(C,\abs{\cdotroomy}_C,\leq)$ is again a Banach lattice. The order is defined by using  the lattice order pointwise: For $f_1,f_2\in L^p_C(\sigma)$, we impose that $f_1\leq f_2$ if and only $f_1(x)\leq f_2(x)$ for $\sigma$-almost every $x\in\br^d$. 

\subsubsection*{Dual of a Banach lattice}
The dual $C^*$ of a Banach lattice $C$ is also a Banach lattice, provided that it is equipped with the lattice order defined as follows: For $c^*,d^*\in C^*$, we impose
\begin{equation}
\label{eq_orderofthedual}
c^*\leq  d^* \text{ if and only if } c^*c\leq  d^*c \text{ for every $c\in C$ with $c\geq 0$}.
\end{equation}

In this paper, it is implicitly understood that the dual of a Banach lattice is equipped with this lattice order. The supremum $c^*\vee d^*$ of $c^*,d^*\in C^*$ is given by
$$
(c^*\vee d^*)(c)=\sup \{c^*(d)+d^*(c-d) : 0\leq d \leq c \}.
$$
\subsubsection*{Positive operator}
An operator $T:C\to D$ from a Banach lattice $C$ to a Banach lattice $D$ is {\it positive} if $c\geq 0$ implies $Tc\geq 0$, for every $c\in C$. By the definition of the lattice order of the dual \eqref{eq_orderofthedual}, the adjoint $T^*:D^*\to C^*$ of a positive operator $T:C\to D$ is also a positive operator, which reads
\begin{equation*}
(T^*d^*) c=d^* (Tc)\geq 0\text{ for every $d^*\in D^*$ with $d^*\geq 0$ and $c\in C$ with $c\geq 0$}.
\end{equation*}

For more on Banach lattices, see Lindenstrauss and Tzafriri's book \cite[Chapter 1]{lindenstrauss1979}.

\subsection{Stopping families and dyadic analysis}
\subsubsection{Terminology}
Let $\cs$ be a collection of dyadic cubes. Let $\mu$ be a locally finite Borel measure.
\begin{itemize}\item {\it $\cs$-children} of $S\in\cs$, denoted by $\chs(S)$, are defined by
$$
\chs(S):=\{S'\in\cs : \text{$S'$ maximal with $S'\subsetneq S$}\}.
$$
\item  {\it $\cs$-parent} of $Q\in\cd$, denoted by $\pis(Q)$, is defined by
$$
\pis(Q):=\{S\in\cs : \text{$S$ minimal with $S\supseteq Q$}\}.
$$
\item $ \es(S):=S\setminus \bigcup_{S'\in\chs(S)} S'$.
\item Let $0<c<1$. The collection $\cs$ is {\it $(c,\mu)$-sparse} if, for every $S\in\cs$,
\begin{equation}\label{def_strictsparseness}
\mu(\es(S))\geq c\mu(S).
\end{equation} By taking the complement, this is equivalent to the condition that, for every $S\in\cs$, $$\sum_{S'\in\cs(S)}\mu(S')\leq (1-c)\mu(S).$$
In the case that the constant $c$ is not explicitly specified, we use the convention that $c=\frac{1}{2}$.
\item Let $C>1$. The collection $\cs$ is {\it $(C,\mu)$-Carleson} if, for every $S\in\cs$, $$\sum_{\substack{S'\in\cs:\\S'\subseteq S}} \mu(S')\leq C \mu(S).$$ 
In the case that the constant $C$ is not explicitly specified, we use the convention that $C=2$.
\item For each $Q\in\cd$, let  $\chs(Q)$ be a collection of pairwise disjoint dyadic subcubes of $Q$. We say that {\it $\cs$ is the family starting at a dyadic cube $S_0$ and defined by the children $\chs$} if $\cs$ is defined recursively as follows: $\cs_0:=\{S_0\}$, $\cs_{k+1}:=\bigcup_{S\in\cs_k} \chs(S)$, and $\cs:=\bigcup_{k=0}^\infty \cs_k$. (Once $\cs$ is defined so, then $\chs(S)=\{S'\in\cs : \text{$S'$ maximal with $S'\subsetneq S$}\}$, for every $S\in\cs$.)
\end{itemize}
\subsubsection{Basic lemmas}
The {\it dyadic (real-valued) Hardy--Littlewood maximal operator} $M^\mu$ is defined by
$$
M^\mu h:= \sup_{Q\in\cd} \angles{h}^\mu_Q 1_Q.
$$
\begin{lemma}[Universal norm bound for the dyadic Hardy--Littlewood maximal operator]Let $1<p\leq \infty$. Let $\mu$ be a locally finite Borel measure. Then
$$
\norm{M^\mu}_{L^p(\mu)\to L^p(\mu)}\leq p'.
$$
\end{lemma}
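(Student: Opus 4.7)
My plan is to handle the easy endpoint $p=\infty$ separately (since $\langle h\rangle^{\mu}_{Q}\leq \lVert h\rVert_{L^{\infty}(\mu)}$ trivially gives bound $1\leq p'$), and then for $1<p<\infty$ proceed in two stages: establish a weak-type $(1,1)$ bound with constant $1$, and then upgrade it to the sharp $L^{p}$ bound $p'$ by a standard layer-cake integration combined with Hölder.

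\textbf{Weak-type $(1,1)$.} First I would reduce to the case $h\ge 0$, since $M^{\mu}h\le M^{\mu}|h|$. For $\lambda>0$, consider the set $E_{\lambda}:=\{x:M^{\mu}h(x)>\lambda\}$. Every point of $E_{\lambda}$ is contained in some dyadic cube $Q$ with $\langle h\rangle^{\mu}_{Q}>\lambda$; because $h\in L^{1}(\mu)$ forces $\langle h\rangle^{\mu}_{Q}\to 0$ as $Q$ ascends to $\mathbb{R}^{d}$, such a cube has a maximal ancestor in the dyadic grid still satisfying the same averaged inequality. The maximal such cubes $\{Q_{j}\}_{j}$ are pairwise disjoint and cover $E_{\lambda}$, so
\begin{equation*}
\mu(E_{\lambda})=\sum_{j}\mu(Q_{j})\leq \sum_{j}\frac{1}{\lambda}\int_{Q_{j}}h\dmu
=\frac{1}{\lambda}\int_{E_{\lambda}}h\dmu\leq\frac{1}{\lambda}\lVert h\rVert_{L^{1}(\mu)}.
\end{equation*}
This is the expected weak-type bound with constant $1$; the minor subtlety is the existence of the maximal covering cubes for locally finite $\mu$, which is ensured by local integrability together with the hierarchical dyadic structure.

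\textbf{Upgrading to $L^{p}$ with constant $p'$.} By the layer-cake formula followed by the refined weak-type estimate $\mu(E_{\lambda})\leq \lambda^{-1}\int_{E_{\lambda}}h\dmu$ from above, I would compute
\begin{equation*}
\int (M^{\mu}h)^{p}\dmu
=p\int_{0}^{\infty}\lambda^{p-1}\mu(E_{\lambda})\,\mathrm{d}\lambda
\leq p\int_{0}^{\infty}\lambda^{p-2}\int_{E_{\lambda}}h\dmu\,\mathrm{d}\lambda.
\end{equation*}
Since $x\in E_{\lambda}$ iff $\lambda<M^{\mu}h(x)$, Fubini turns the inner integral into $\int_{0}^{M^{\mu}h(x)}\lambda^{p-2}\,\mathrm{d}\lambda=(p-1)^{-1}(M^{\mu}h(x))^{p-1}$. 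Thus
\begin{equation*}
\int (M^{\mu}h)^{p}\dmu\leq \frac{p}{p-1}\int h\cdot (M^{\mu}h)^{p-1}\dmu\leq p'\lVert h\rVert_{L^{p}(\mu)}\lVert M^{\mu}h\rVert_{L^{p}(\mu)}^{p-1}
\end{equation*}
by Hölder. Dividing both sides by $\lVert M^{\mu}h\rVert_{L^{p}(\mu)}^{p-1}$ yields $\lVert M^{\mu}h\rVert_{L^{p}(\mu)}\leq p'\lVert h\rVert_{L^{p}(\mu)}$.

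\textbf{Main obstacle.} There is no conceptual obstacle here; everything is classical. The only nontrivial point is that dividing by $\lVert M^{\mu}h\rVert_{L^{p}(\mu)}^{p-1}$ requires this quantity to be finite, so one should first prove the estimate for $h$ bounded with support where $\mu$ is finite (then $M^{\mu}h\in L^{\infty}(\mu)$ is supported where $\mu$ is finite, hence in $L^{p}(\mu)$) and conclude by monotone convergence for general nonnegative $h\in L^{p}(\mu)$. The factor $p/(p-1)=p'$ is sharp and arises purely from the $\lambda^{p-2}$-integration, which is the reason the refined form of the weak-type estimate (retaining $\int_{E_{\lambda}}h\dmu$ rather than $\lVert h\rVert_{1}$) must be used.
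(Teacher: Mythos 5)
Your argument is correct and is the standard way to obtain the sharp constant $p'$: a Calder\'on--Zygmund covering to get the refined weak-type estimate $\mu(E_\lambda)\le\lambda^{-1}\int_{E_\lambda}h\,\mathrm{d}\mu$, followed by the layer-cake/Fubini computation and H\"older. The paper itself states this lemma without proof (it is invoked as classical background in Section~2.2.2), so there is no proof to compare against; your write-up fills that gap appropriately. One small remark on the covering step: in the paper's conventions $\cd$ is always a \emph{finite} collection of dyadic cubes, so maximal cubes with $\angles{h}^\mu_Q>\lambda$ exist for free, and one does not need to invoke $h\in L^1(\mu)$ or the decay of averages along the tower; indeed $h\in L^p(\mu)$ need not be integrable when $\mu$ is infinite, so that particular justification would require a little more care if $\cd$ were the full grid, whereas finiteness of $\cd$ sidesteps it entirely. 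Your handling of the division by $\norm{M^\mu h}_{L^p(\mu)}^{p-1}$ via truncation and monotone convergence is the right way to make the argument rigorous.
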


\begin{lemma}[Dyadic Carleson embedding theorem]\label{lem_carlesonembedding}Let $1<p<\infty$. Let $\mu$ be a locally finite Borel measure. Let $E$ be a Banach space. Suppose that $\cs$ is a sparse  collection. Then
$$
\Big(\sum_{S\in\cs} \big(\angles{\abs{f}_E}^\mu_S \big)^p \mu(S) \Big)^{1/p}\leq 2 p' \norm{f}_{L^p_E(\mu)}.
$$
\end{lemma}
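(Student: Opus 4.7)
The plan is to reduce the vector-valued statement to the scalar-valued one and then exploit the sparseness together with the universal bound for the dyadic Hardy--Littlewood maximal operator stated in the preceding lemma.

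First I would set $h := \abs{f}_E$, which is a nonnegative scalar function with $\norm{h}_{L^p(\mu)} = \norm{f}_{L^p_E(\mu)}$, and observe that $\angles{\abs{f}_E}^\mu_S = \angles{h}^\mu_S$. Thus it suffices to prove
\[
\Big(\sum_{S\in\cs} (\angles{h}^\mu_S)^p \mu(S)\Big)^{1/p} \leq 2 p' \norm{h}_{L^p(\mu)}
\]
for every nonnegative $h \in L^p(\mu)$.

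Next I would invoke sparseness: by the convention $c = \tfrac{1}{2}$, for each $S \in \cs$ we have $\mu(S) \leq 2\mu(\es(S))$, where the sets $\{\es(S)\}_{S\in\cs}$ are pairwise disjoint subsets of $\br^d$. Using this, together with the pointwise estimate $\angles{h}^\mu_S 1_S \leq M^\mu h$, I would estimate
\[
\sum_{S\in\cs} (\angles{h}^\mu_S)^p \mu(S)
\leq 2 \sum_{S\in\cs} (\angles{h}^\mu_S)^p \mu(\es(S))
\leq 2 \sum_{S\in\cs} \int_{\es(S)} (M^\mu h)^p \dmu
\leq 2 \int_{\br^d} (M^\mu h)^p \dmu,
\]
where in the last step I used that the sets $\es(S)$ are pairwise disjoint. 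Applying the preceding lemma, the last integral is at most $(p')^p \norm{h}_{L^p(\mu)}^p$.

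Taking $p$-th roots yields the bound $2^{1/p} p' \norm{h}_{L^p(\mu)} \leq 2 p' \norm{h}_{L^p(\mu)}$, as desired. There is essentially no obstacle here: the vector-valuedness collapses because only $\abs{f}_E$ enters the averages, and the core mechanism is the standard sparseness-to-maximal-function reduction. The only point requiring mild care is confirming the constant $2^{1/p} \leq 2$ to absorb the sparseness factor into the stated bound $2p'$.
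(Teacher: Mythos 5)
Your proof is correct and uses the standard reduction: the vector-valued case collapses to the scalar case through $h=\abs{f}_E$, the sparseness condition $\mu(S)\leq 2\mu(\es(S))$ together with disjointness of the sets $\es(S)$ converts the sum to an integral of $(M^\mu h)^p$, and the universal bound $\norm{M^\mu}_{L^p(\mu)\to L^p(\mu)}\leq p'$ finishes the estimate with constant $2^{1/p}p'\leq 2p'$. The paper states this lemma without proof, so there is no in-text argument to compare against; your argument is the expected one, and the only point worth flagging is that the pointwise inequality $\angles{h}^\mu_S 1_S\leq M^\mu h$ uses the maximal operator taken over a collection containing $\cs$, which is harmless since the universal bound $p'$ holds uniformly over all finite collections of dyadic cubes.
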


\begin{lemma}[$L^p$-variant of Pythagoras' theorem, Lemma 2.7 in \cite{hanninenhytonen2014}]\label{lem_pythagoras}Let $1\leq p <\infty$. Let $\mu$ be a locally finite Borel measure. Let $E$ be a Banach space. Assume that $\cs$ is a sparse collection of dyadic cubes. Assume that $\{f_S\}_{S\in\cs}$ is a collection of $E$-valued functions such that every $f_S$ is supported on $S$ and constant on each $S'\in\chs(S)$. Then
$$
\norm{\sum_{S\in\cs} f_S}_{L^p_E(\mu)}\leq 3p \Big(\sum_{S\in\cs} \norm{f_S}_{L^p_E(\mu)}^p\Big)^{1/p}.
$$ 
\end{lemma}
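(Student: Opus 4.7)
The plan is to split each $f_S$ into its part supported on the leaf set $\es(S)$ and its piecewise-constant part on the stopping children. Specifically, write $f_S = F_S + H_S$, where $F_S := f_S 1_{\es(S)}$ and $H_S := \sum_{S' \in \chs(S)} c_{S,S'}\, 1_{S'}$, with $c_{S,S'} \in E$ denoting the constant value of $f_S$ on $S' \in \chs(S)$ guaranteed by the hypothesis. By the triangle inequality in $L^p_E(\mu)$, it suffices to bound each of $\norm{\sum_S F_S}_{L^p_E(\mu)}$ and $\norm{\sum_S H_S}_{L^p_E(\mu)}$ by a constant times $\bigl(\sum_S \norm{f_S}_{L^p_E(\mu)}^p\bigr)^{1/p}$.

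The first term is handled by a disjoint-support argument: the leaf sets $\{\es(S)\}_{S \in \cs}$ are pairwise disjoint, so
\[
\norm{\sum_{S \in \cs} F_S}_{L^p_E(\mu)}^p \;=\; \sum_{S \in \cs} \int_{\es(S)} \abs{f_S}_E^p \dmu \;\leq\; \sum_{S \in \cs} \norm{f_S}_{L^p_E(\mu)}^p.
\]
The second term is the main point. Reindexing by stopping parent,
\[
\sum_{S \in \cs} H_S \;=\; \sum_{S' \in \cs'} c_{\pis(S'), S'}\, 1_{S'},
\]
where $\cs'$ denotes the set of $S' \in \cs$ that have a stopping parent. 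For $p > 1$ I estimate this via duality: the $L^p_E(\mu)$-norm equals the supremum over $g$ with $\norm{g}_{L^{p'}_{E^*}(\mu)} \leq 1$ of $\bigl|\sum_{S' \in \cs'} \mu(S')\,\langle \angles{g}^\mu_{S'},\, c_{\pis(S'), S'} \rangle\bigr|$. By H\"older's inequality in the sum weighted by $\mu(S')$, this is bounded by the product
\[
\Bigl(\sum_{S' \in \cs'} \bigl(\angles{\abs{g}_{E^*}}^\mu_{S'}\bigr)^{p'} \mu(S')\Bigr)^{1/p'} \Bigl(\sum_{S' \in \cs'} \abs{c_{\pis(S'), S'}}_E^p\, \mu(S')\Bigr)^{1/p}.
\]

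The first factor is controlled by the dyadic Carleson embedding theorem (Lemma \ref{lem_carlesonembedding}) applied to $\abs{g}_{E^*}$, which is exactly where the sparseness hypothesis on $\cs$ enters, yielding the bound $2p\, \norm{g}_{L^{p'}_{E^*}(\mu)} \leq 2p$. For the second factor, since $f_{\pis(S')}$ is constant on $S'$ with value $c_{\pis(S'), S'}$, one has $\abs{c_{\pis(S'), S'}}_E^p\, \mu(S') = \int_{S'} \abs{f_{\pis(S')}}_E^p \dmu$; summing and regrouping by the parent $S = \pis(S')$ gives $\sum_{S \in \cs} \int_{\bigcup_{S' \in \chs(S)} S'} \abs{f_S}_E^p \dmu \leq \sum_S \norm{f_S}_{L^p_E(\mu)}^p$. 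Combining, $\norm{\sum_S H_S}_{L^p_E(\mu)} \leq 2p \bigl(\sum_S \norm{f_S}_{L^p_E(\mu)}^p\bigr)^{1/p}$, so adding the two estimates gives the total constant $1 + 2p \leq 3p$ for $p \geq 1$. The edge case $p = 1$ is immediate from the ordinary triangle inequality. I do not anticipate any serious obstacles; the only mildly delicate point is the standard Bochner-valued H\"older step $\abs{\langle u, v\rangle} \leq \abs{u}_{E^*}\abs{v}_E$ used inside the duality.
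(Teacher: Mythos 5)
Your argument is correct. Since this lemma is quoted in the present paper as Lemma~2.7 of \cite{hanninenhytonen2014} with no proof reproduced here, there is no in-paper proof to compare against; but for the record, your argument is the standard one for this type of statement and matches the intended approach. A few remarks:

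The decomposition $f_S = f_S\,1_{\es(S)} + \sum_{S'\in\chs(S)} c_{S,S'}\,1_{S'}$, the pairwise disjointness of the sets $\es(S)$, and the reindexing of the stopping-children part over cubes $S'$ that arise as stopping children are all correct, and the duality step $\norm{h}_{L^p_E(\mu)} = \sup\{|\int\langle g,h\rangle\dmu| : \norm{g}_{L^{p'}_{E^*}(\mu)}\le 1\}$ is legitimate for an arbitrary Banach space $E$ (even though $L^{p'}_{E^*}(\mu)$ need not be the full dual of $L^p_E(\mu)$, testing against it still recovers the norm, by approximation with simple functions). The passage from $|\angles{g}^\mu_{S'}|_{E^*}$ to $\angles{|g|_{E^*}}^\mu_{S'}$ is the triangle inequality for Bochner integrals, after which the discrete H\"older inequality, the Carleson embedding (applied with exponent $p'$, so yielding the constant $2(p')' = 2p$), and the regrouping by stopping parent are all handled correctly. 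The constant $1 + 2p \le 3p$ for $p\ge 1$ matches the stated bound, and isolating $p=1$ via the plain triangle inequality is exactly right since the Carleson lemma is only available for exponents in $(1,\infty)$.

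One small notational slip: you write $\pis(S')$ for the \emph{strict} stopping parent of $S'\in\cs$, but with the paper's definition $\pis(S') = S'$ when $S'\in\cs$ (the minimal $S\in\cs$ containing $S'$ is $S'$ itself). What you mean is the minimal $S\in\cs$ with $S\supsetneq S'$, i.e.\ the unique $S$ with $S'\in\chs(S)$. This is clear from context and does not affect the argument, but it is worth fixing if this is to be written up.
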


\subsection{Equivalence of the $A_\infty$ condition and the Carleson condition}
The equivalence presented in this section is well-known. However, for reader's convenience, we represent a proof for it.
\begin{lemma}[Equivalence of the $A_\infty$ condition and the Carleson condition]\label{lem_ainfinitycarleson}Let $\sigma$ and $\omega$ be locally finite Borel measures. Then the measure $\sigma$ satisfies the $A_\infty$ condition with respect to the measure $\omega$ if and only if every $\omega$-Carleson collection is also $\sigma$-Carleson. Quantitatively,
$$
[\sigma]_{A_\infty (\omega)}\eqsim [\sigma]_{\operatorname{Car}(\omega)},
$$
where
$$
[\sigma]_{A_\infty (\omega)}:=\sup_{Q\in\cd} \frac{1}{\sigma(Q)}\int M_Q^\omega(\sigma)\domega,\quad [ \sigma]_{\operatorname{Car}(\omega)}:=\sup_{\substack{\cg\subseteq \cd:\\ \text{$\cg$ $w$-Carleson}}} \sup_{G\in\cg} \frac{1}{\sigma(G)}\sum_{\substack{G'\in\cg:\\G'\subseteq G}}\sigma(G').
$$
\end{lemma}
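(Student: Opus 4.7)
The plan is to prove both inequalities through a single stopping-cube construction on the density $\sigma/\omega$. For the direction $[\sigma]_{A_\infty(\omega)} \lesssim [\sigma]_{\operatorname{Car}(\omega)}$, I would fix $Q \in \cd$ and build, starting from $Q$, the family $\cs \subseteq \cd$ whose children are
\[
\chs(S) := \{S' \in \cd : S' \subsetneq S \text{ maximal with } \sigma(S')/\omega(S') > 2\sigma(S)/\omega(S)\}.
\]
The standard weak-$(1,1)$ inequality for the dyadic $\omega$-maximal function of $\sigma$, applied with $\lambda = 2\sigma(S)/\omega(S)$, yields $\omega\big(\bigcup \chs(S)\big) \leq \omega(S)/2$, so $\cs$ is $(1/2,\omega)$-sparse, and hence $(2,\omega)$-Carleson by sparse-to-Carleson. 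The maximality built into the stopping rule then gives the pointwise bound $M^\omega_Q(\sigma) \leq 2\sigma(S)/\omega(S)$ on $\es(S)$ for every $S \in \cs$; integrating over the partition $\{\es(S)\}_{S \in \cs}$ of $Q$ and invoking the $\sigma$-Carleson property of the $\omega$-Carleson family $\cs$,
\[
\int_Q M^\omega_Q(\sigma)\,d\omega \leq 2\sum_{S \in \cs}\sigma(S) \leq 2[\sigma]_{\operatorname{Car}(\omega)}\,\sigma(Q).
\]

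For the reverse inequality $[\sigma]_{\operatorname{Car}(\omega)} \lesssim [\sigma]_{A_\infty(\omega)}$, I would take an arbitrary $\omega$-Carleson family $\cg$ with constant $C$, fix $G \in \cg$, and run the same stopping construction on $\sigma/\omega$ inside $G$ to obtain an auxiliary $(1/2,\omega)$-sparse principal-cube family $\mathcal{P}$ starting at $G$. For each $G' \in \cg$ with $G' \subseteq G$, setting $P := \pi_{\mathcal{P}}(G')$, the maximality of the stopping forces $\sigma(G')/\omega(G') \leq 2\sigma(P)/\omega(P)$ (the alternative would place a stopping-property cube containing some $\operatorname{ch}_{\mathcal{P}}(P)$-element, contradicting the maximality defining $\operatorname{ch}_{\mathcal{P}}(P)$). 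Grouping by the $\mathcal{P}$-ancestor and using the $\omega$-Carleson property of $\cg$ to bound $\sum_{\pi_{\mathcal{P}}(G')=P}\omega(G') \leq C\omega(P)$,
\[
\sum_{\substack{G' \in \cg \\ G' \subseteq G}} \sigma(G') = \sum_{P \in \mathcal{P}}\sum_{\pi_{\mathcal{P}}(G')=P}\omega(G')\frac{\sigma(G')}{\omega(G')} \leq 2C\sum_{P \in \mathcal{P}}\sigma(P).
\]
The sparseness of $\mathcal{P}$ then supplies disjoint sets $E_{\mathcal{P}}(P) \subseteq P$ with $\omega(E_{\mathcal{P}}(P)) \geq \omega(P)/2$, and the trivial pointwise bound $M^\omega_G(\sigma) \geq \sigma(P)/\omega(P)$ on $P$ yields $\sum_{P \in \mathcal{P}}\sigma(P) \leq 2\int_G M^\omega_G(\sigma)\,d\omega \leq 2[\sigma]_{A_\infty(\omega)}\sigma(G)$, closing the argument.

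The main obstacle is that the equivalence \emph{$\omega$-Carleson $\Leftrightarrow$ $\omega$-sparse} fails with the natural default constants: a $(2,\omega)$-Carleson family need not be $(1/2,\omega)$-sparse, as one sees when a cube is split into two children of half its $\omega$-mass with no further nesting. This rules out a direct disjointification of $\cg$ in the reverse direction. The workaround is exactly to introduce the auxiliary principal-cube family $\mathcal{P}$ of $\sigma/\omega$---which is sparse by construction---and to use the $\omega$-Carleson of $\cg$ only as a bookkeeping device to count how much $\omega$-mass of $\cg$-cubes is collected at each $P \in \mathcal{P}$.
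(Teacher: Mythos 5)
Your proof is correct and follows essentially the same route as the paper's: both directions hinge on the same stopping-cube family built from the ratio $\sigma/\omega$, with the opposite of the stopping condition giving the pointwise bound on $M^\omega_Q(\sigma)$ over $\es(S)$ in one direction and the bound $\sigma(G')/\omega(G')\leq 2\sigma(P)/\omega(P)$ (for $\pi_{\mathcal P}(G')=P$) in the other, and with sparseness and the $\omega$-Carleson hypothesis supplying the counting estimates. The only cosmetic differences are that you justify sparseness via the weak $(1,1)$ bound for $M^\omega(\sigma)$ (the paper does the same computation directly via disjointness) and that in the $[\sigma]_{\operatorname{Car}(\omega)}\lesssim[\sigma]_{A_\infty(\omega)}$ direction you build the auxiliary principal-cube family $\mathcal P$ with children drawn from all of $\cd$, whereas the paper restricts the children to the Carleson collection itself; both variants run with the identical estimates.
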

\begin{proof}First, we prove that $[\sigma]_{\operatorname{Car}(\omega)}\lesssim [\sigma]_{A_\infty (\omega)}$. Let $\cah$ be an $\omega$-Carleson collection. Fix $H_0\in\cah$. Let $\cg$ be the stopping family starting at $H_0$ and defined by
$$\chg(G):=\{G'\in\cah : G'\subseteq G \text{ maximal with } \frac{\sigma(G')}{\omega(G')}>2 \frac{\sigma(G)}{\omega(G)}\}.$$
Observe that the collection $\cg$ is $\omega$-sparse because
$$
\sum_{G'\in\chg(G)}\omega(G')<\frac{1}{2} \omega(G) \Big( \frac{1}{\sigma(G)}\sum_{G'\in\chg(G)} \sigma(G')\Big)\leq \frac{1}{2}\omega(G).
$$
Let $\eg(G):=G\setminus \bigcup_{G'\in\chg(G)} G'$. Moreover, observe that $\pig(H)=G$ implies that $H$ satisfies the opposite of the stopping condition. Altogether,
\begin{itemize}
\item The sets $\eg(G)$ are pairwise disjoint and satisfy $\omega(G)\leq 2 \omega(\eg(G))$.
\item $\frac{\sigma(H)}{\omega(H)}\leq 2 \frac{\sigma(G)}{\omega(G)}$ whenever $G\in \cg$ and $H\in\cah$ are such that $\pig(H)=G$.
\end{itemize} 
Now,
\begin{equation*}
\begin{split}
&\sum_{\substack{H\in\cah:\\ H\subseteq H_0}} \sigma(H)= \sum_{G\in\cg} \sum_{\substack{H\in\cah:\\ \pig(H)=G}}\frac{\sigma(H)}{\omega(H)}\omega(H)\leq 2 \sum_{G\in\cg} \frac{\sigma(G)}{\omega(G)} \sum_{\substack{H\in\cah:\\ H \subseteq G}} \omega(H)\leq 4 \sum_{G\in\cg} \frac{\sigma(G)}{\omega(G)} \omega(G)\\
& \leq 8  \sum_{G\in\cg} \frac{\sigma(G)}{\omega(G)} \omega(\eg(G))\leq  8 \int_{H_0}  M^\omega_G(\sigma) \domega \leq 8 [\sigma]_{A_\infty(\omega)} \sigma(H_0).
\end{split}
\end{equation*}

Next, we prove that $ [\sigma]_{A_\infty (\omega)}\lesssim [\sigma]_{\operatorname{Car}(\omega)}$.  Fix $Q_0\in\cd$. Again, let $\cg$ be the stopping family starting at $Q_0$ and defined by
$$\chg(G):=\{G'\in\cd : G'\subseteq G \text{ maximal with } \frac{\sigma(G')}{\omega(G')}>2 \frac{\sigma(G)}{\omega(G)}\}.$$ Then, $1_{\eg(G)}M_{Q_0}^\omega(\sigma)\leq 2 \frac{\sigma(G)}{\omega(G)}$, and $1_{Q_0}=\sum_{\substack{G\in\cg}} 1_{\eg(G)}$ $\omega$-almost everywhere. Moreover, since $\cg$ is $\omega$-sparse, it is $\omega$-Carleson: $$\sum_{\substack{G'\in\cg :\\ G'\subseteq G}}\omega(G)\leq 2 \sum_{\substack{G'\in\cg :\\ G'\subseteq G}}\omega(\eg(G'))=2\omega(\bigcup_{\substack{G'\in\cg :\\ G'\subseteq G}} \eg(G'))\leq 2 \omega(G).$$  Now,
\begin{equation*}
\begin{split}
&\int_{Q_0} M_{Q_0}^\omega(\sigma) \domega= \int_{Q_0} \sum_{\substack{G\in\cg}} 1_{\eg(G)} M_{Q_0}^\omega(\sigma) \domega\leq 2\sum_{\substack{G\in\cg}} \frac{\sigma(G)}{\omega(G)} \omega(\eg(G))\\
&\leq 2 \sum_{\substack{G\in\cg: \\G\subseteq Q_0}} \sigma(G)\leq 2[\sigma]_{\operatorname{Car}(\omega)} \sigma(Q_0).
\end{split}
\end{equation*}
\end{proof}

\section{Weighted characterizations}
In this section, we prove Theorem \ref{thm_twoweight} and Theorem \ref{thm_dualpairingtesting}.
\subsection{Particular family of stopping cubes}\label{sec_particularfamily}

\begin{lemma}[Properties of a particular stopping family]\label{lem_stoppingfamily}Let $E$ be a Banach lattice. Let $\mu$ be a locally finite Borel measure. Let $\cd$ be a finite collection of dyadic cubes. Let $f:\br^d\to E_+$ be a locally integrable, positive function. 

For each dyadic cube $F\in\cd$, the stopping children $\chf(F)$ of $F$ is defined as the collection of all the maximal dyadic cubes $F'\in\{F'\in\cd : F'\subseteq F\}$ that satisfy the stopping condition
\begin{equation}\label{eq_lemma_strongstoppingcondition}
\abs{\sup_{\substack{Q\in\cd:\\Q\supseteq F'}}\angles{f}^\mu_Q}_E>2 \angles{\abs{ \sup_{Q\in\cd}\angles{f}^\mu_Q 1_Q}_E}^\mu_F.
\end{equation}
Let $\cf$ be the stopping family defined by the stopping children $\chf$. For each $F\in \cf$, define the auxiliary function $$f_F:=\sup_{\pif(Q)=F} \angles{f}^\mu_Q 1_Q.$$
Then, the following conditions are satisfied:
\begin{itemize}
\item[a)] The collection $\cf$ is sparse.
\item[b)] Each auxiliary function $f_F$ satisfies the $L^\infty$ estimate
\begin{equation}\label{eq_lemma_auxiliaryfunction}
\norm{f_F}_{L^\infty_E}\leq 2 \angles{\abs{ \sup_{Q\in\cd}\angles{f}^\mu_Q 1_Q}_E}^\mu_F.
\end{equation}
\item[c)] Each auxiliary function $f_F$ satisfies the replacement rule
$$
\int_Q f \dmu \leq \int_Q f_F \dmu\quad\text{ whenever $\pif(Q)=F$}.
$$
\end{itemize}
\end{lemma}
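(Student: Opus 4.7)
The plan is to derive all three parts from a single consequence of the construction: whenever $\pif(Q) = F$, the cube $Q$ itself fails the stopping condition \eqref{eq_lemma_strongstoppingcondition}. The reason is that if $Q \subsetneq F$ satisfied it, then by maximality some ancestor of $Q$ strictly inside $F$ would lie in $\chf(F)$, contradicting $\pif(Q) = F$; the borderline case $Q = F$ is easier, since every $R \supseteq F$ in $\cd$ satisfies $R \ni x$ for all $x \in F$, giving $\abs{\sup_{R \supseteq F} \angles{f}^\mu_R}_E \leq \angles{\abs{\bar{M}^\mu f}_E}^\mu_F$ directly by averaging. In either case one obtains the master estimate
\begin{equation*}
\Bigl\lvert \sup_{R \in \cd,\, R \supseteq Q} \angles{f}^\mu_R \Bigr\rvert_E \,\leq\, 2 \angles{\abs{\bar{M}^\mu f}_E}^\mu_F \qquad \text{whenever } \pif(Q) = F.
\end{equation*}

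For part (b), since $\cd$ is finite, the lattice supremum defining $f_F$ can be computed pointwise in $E$. At each $x \in F$, the cubes $Q$ with $\pif(Q) = F$ containing $x$ form a chain in $\cd$ with a smallest element $Q^*_x$. Positivity of $f$ keeps all averages in the positive cone, so $0 \leq f_F(x) \leq \sup_{R \supseteq Q^*_x} \angles{f}^\mu_R$ in $E$; applying $\abs{\cdot}_E$ (monotone on positives) and the master estimate at $Q = Q^*_x$ yields \eqref{eq_lemma_auxiliaryfunction}. Part (c) is immediate: for $\pif(Q) = F$ the term $\angles{f}^\mu_Q 1_Q$ is one of those whose lattice supremum defines $f_F$, so $f_F \geq \angles{f}^\mu_Q$ pointwise on $Q$, and integration gives $\int_Q f_F \dmu \geq \mu(Q) \angles{f}^\mu_Q = \int_Q f \dmu$.

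For part (a), each $F' \in \chf(F)$ satisfies \eqref{eq_lemma_strongstoppingcondition}; since any $R \supseteq F'$ contains every $x \in F'$, monotonicity gives $\abs{\bar{M}^\mu f(x)}_E \geq \abs{\sup_{R \supseteq F'} \angles{f}^\mu_R}_E > 2 \angles{\abs{\bar{M}^\mu f}_E}^\mu_F$ for $x \in F'$. Integrating over $F'$, summing over the disjoint collection $\chf(F)$, and comparing with $\int_F \abs{\bar{M}^\mu f}_E \dmu = \mu(F) \angles{\abs{\bar{M}^\mu f}_E}^\mu_F$ forces $\sum_{F' \in \chf(F)} \mu(F') \leq \tfrac{1}{2} \mu(F)$, which is the sparsity condition. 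The one subtle point throughout is in (b): $\abs{\cdot}_E$ need not commute with lattice suprema in an abstract Banach lattice, and this is resolved by first dominating the lattice sup by a single positive element $\sup_{R \supseteq Q^*_x} \angles{f}^\mu_R$ before applying $\abs{\cdot}_E$.
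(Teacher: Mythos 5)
Your proposal is correct and follows essentially the same route as the paper: observe that $\pif(Q)=F$ forces $Q$ to fail the stopping condition, use a minimal cube $Q^*_x$ to transfer this failure into the $L^\infty$ bound on $f_F$ (taking care to dominate the lattice supremum by a single positive element before applying $\abs{\cdot}_E$, exactly as the paper does), get the replacement rule from positivity, and get sparseness by averaging the stopping inequality over the disjoint stopping children. The only genuine addition is your explicit treatment of the borderline case $Q=F$ (to rule out $F\in\chf(F)$, which the paper glosses over by writing $F'\subsetneq F$ in its introduction but $F'\subseteq F$ in the lemma), and that is a correct and welcome clarification rather than a departure in method.
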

\begin{proof}

First, we check that each auxiliary function satisfies the $L^\infty$ estimate. We note that the condition $\pif(Q)=F$ implies that $Q$ satisfies the opposite of the stopping condition. Now, fix $x\in \bigcup_{Q\in\cd: \pif(Q)=F} Q$. Let $Q_x$ be the minimal (which exists since the collection $\cd$ is finite) dyadic cube such that $\pif(Q_x)=F$ and $Q\ni x$. Since the cube $Q_x$ satisfies the opposite of the stopping condition  \eqref{eq_lemma_strongstoppingcondition}, we have 
$$
\abs{f_F(x)}_E=\abs{\sup_{\substack{Q\in\cd:  \\\pif(Q)=F,\\ Q\ni x}} \angles{f}^\mu_Q}_E\leq \abs{\sup_{\substack{Q\in\cd:  \\Q\supseteq Q_x}} \angles{f}^\mu_Q}_E\leq  2 \angles{\abs{ \sup_{Q\in\cd}\angles{f}^\mu_Q 1_Q}_E}^\mu_F.
$$

Next, we check that $\cf$ is sparse. By the stopping condition \eqref{eq_lemma_strongstoppingcondition},
\begin{equation*}
\begin{split}
\angles{\abs{ \sup_{Q\in\cd}\angles{f}^\mu_Q 1_Q}_E}^\mu_F&\geq \sum_{F'\in\chf(F)} \frac{\mu(F')}{\mu(F)}\angles{\abs{ \sup_{Q\in\cd}\angles{f}^\mu_Q 1_Q}_E}^\mu_{F'}\geq \sum_{F'\in\chf(F)} \frac{\mu(F')}{\mu(F)}\abs{ \sup_{\substack{Q\in\cd:\\Q\supseteq F'}}\angles{f}^\mu_Q}_E\\
&\geq 2 \angles{\abs{ \sup_{Q\in\cd}\angles{f}^\mu_Q 1_Q}_E}^\mu_F \sum_{F'\in\chf(F)} \frac{\mu(F')}{\mu(F)}.
\end{split}
\end{equation*}
Dividing out the factor $\angles{\abs{ \sup_{Q\in\cd}\angles{f}^\mu_Q 1_Q}_E}^\mu_F$ yields $\sum_{F'\in\chf(F)} \mu(F')\leq \frac{1}{2}\mu(F)$.

Finally, we observe that the replacement follows from positivity: $$\int_Q f \dmu = \int_Q \angles{f}_Q^\mu 1_Q \dmu\leq \int_Q f_F \dmu.$$

\end{proof}
\begin{remark}
Instead of the stopping condition \eqref{eq_lemma_strongstoppingcondition}, we could use the stopping condition
\begin{equation}\label{eq_weakstoppingcondition}
\abs{\sup_{\substack{Q\in\cd:\\F\supseteq Q\supseteq F'}}\angles{f}^\mu_Q}_E \geq 2 \norm{\bar{M}^\mu}_{L^1_E(\mu)\to L^{1,\infty}_E(\mu)} \angles{\abs{f}_E}^\mu_F,
\end{equation}
which in the real-valued case (that is, $E=\br$) coalesces with the Muckenhoupt--Wheeden principal cubes stopping condition $\abs{\angles{f}}^\mu_{F'}>2 \angles{\abs{f}}^\mu_F$. 
The stopping family defined by the condition \eqref{eq_weakstoppingcondition} is sparse, because
$$
\sum_{F'\in\chf(F)} \mu(F')\leq \mu\big(\{ \abs{\bar{M}^\mu (1_Ff)}_E> 2 \norm{\bar{M}^\mu}_{L^1_E(\mu)\to L^{1,\infty}_E(\mu)} \angles{\abs{f}_E}^\mu_F\}\big)\leq \frac{1}{2}\mu(F),
$$
and the auxiliary function $f_F:=\sup_{\pif(Q)=F} \angles{f}^\mu_Q 1_Q$ associated with the stopping family satisfies the estimate
$$
\norm{f_F}_{L^\infty_E}\leq 2 \norm{\bar{M}^\mu}_{L^1_E(\mu)\to L^{1,\infty}_E(\mu)} \angles{\abs{f}_E}^\mu_F,
$$
because of a similar argument as in the proof of Lemma \ref{lem_stoppingfamily}.
\end{remark}
\subsection{Proof of the two weight characterization}
In this subsection, we prove Theorem \ref{thm_twoweight}. 
\begin{proof}We prove the norm estimate \eqref{eq_norminequality} by using duality. Let $f\in\lp$ be such that $f\geq 0$, and $g\in\lqp$ be such that $g\geq 0$. By writing out the definition of the operator,
\begin{equation*}
\begin{split}
S:=\int g T(f\sigma) \domega=\sum_{Q\in\cd} \int_Q g \domega \lambda_Q \int_Q f \dsigma.
\end{split}
\end{equation*}

First, we define stopping families.
Associated with $f\in\lp$, let $\cf$ be the stopping family defined by the stopping children
$$
\chf(F):=\{F'\in\cd : F'\subsetneq F \text{ maximal with } \abs{\sup_{\substack{Q\in\cd :\\ Q\supseteq F'}}\angles{f}^\sigma_{Q}}_C>2 \angles{\abs{ \sup_{Q\in\cd} \angles{ f }^\sigma_Q 1_Q }_C}_F^\sigma \}.
$$
 Similarly, let $\cg$ be the stopping family associated with $g\in\lqp$.

 Next, we rearrange the summation by means of the stopping cubes. We use the notation $\pi(Q)=(F,G)$ to indicate that $\pif(Q)=F$ and $\pig(Q)=G$. We have
\begin{equation}
\begin{split}\label{eq_rearrangement}
S:=\sum_{Q\in\cd}=\sum_{F\in\cf, G\in\cg} \sum_{\substack{Q\in\cd:\\\pi(Q)=(F,G)}}&\overset{i)}{=} \Big(\sum_{F\in\cf} \sum_{\substack{G\in\cg:\\G\subseteq F}} +\sum_{G\in\cg} \sum_{\substack{F\in\cf:\\F\subsetneq G}}\Big) \sum_{\substack{Q\in\cd:\\\pi(Q)=(F,G)}}\\
&\overset{ii)}{\leq}\Big(\sum_{F\in\cf} \sum_{\substack{G\in\cg:\\ \pif(G)=F}}+\sum_{G\in\cg} \sum_{\substack{F\in\cf:\\\pig(F)=G}}\Big) \sum_{\substack{Q\in\cd:\\\pi(Q)=(F,G)}}\\
&=:S_{G\subseteq F}+S_{G\supseteq F},
\end{split}
\end{equation}
because of the following observations:
\begin{itemize}
\item[i)] Under the condition $\pi(Q)=(F,G)$, we have $F\cap G\neq \emptyset$. Hence, by dyadic nestedness, either $G\subseteq F$ or $G\supsetneq F$.
\item[ii)] Under the conditions $\pi(Q)=(F,G)$ and $G\subseteq F$, we have $Q\subseteq G\subseteq F$. Hence $F=\pif(Q)\subseteq \pif(G)\subseteq \pif(F)=F$, which implies that $\pif(G)=F$. Similarly, when $\pi(Q)=(F,G)$ and $F\subsetneq G$, we have $\pig(F)=G$.
\end{itemize}

By symmetry, it suffices to consider the  summation $S_{G\subseteq F}$ in the inequality \eqref{eq_rearrangement}. 
Under the condition $\pig(Q)=(F,G)$, we can write
\begin{equation}\label{eq_replacements}
\int_Q g \domega \lambda_Q \int_Q f \dsigma\leq \int_Q g_G \domega \lambda_Q \int_Q f_F \dsigma=\int g_G \lambda_Q \big( \int_Q f_F \dsigma \big) 1_Q \domega,
\end{equation}
where
\begin{subequations}
\begin{align}
\nonumber g_G&:=\sum_{G'\in\chg(G)} \angles{g}^\omega_{G'} 1_{G'} +g 1_{\eg(G)}, \\\nonumber
f_F&:=\sup_{\substack{Q\in\cd:\\\pif(Q)=F}}\angles{f}^\sigma_Q 1_{Q},
\end{align}
\end{subequations}
which follows from the following observations:
\begin{itemize}
\item If $G'\in\chg(G)$ is such that $G'\cap Q\neq \emptyset$, then, by dyadic nestedness, either $G'\subsetneq Q$ or $Q\subseteq G'$, the latter of which is excluded by the condition $\pig(Q)=G$. Therefore
$$
\int_Q g 1_{G'} \domega=\int_Q \angles{g}^\omega_{G'} 1_{G'} \domega,
$$
which implies that
\begin{equation}\label{eq_replacement}
\begin{split}
\int_Q g \domega &=\int_Q (\sum_{G'\in\chg(G)} 1_{G'} + 1_{\eg(G)}) g\domega\\
&=\int_Q  \big(\sum_{G'\in\chg(G)} \angles{g}^\omega_{G'} 1_{G'} +1_{\eg(G)}  g\big) \domega=:\int_Q g_G \domega.
\end{split}
\end{equation}
\item By positivity,
$$
\int_Q f \dsigma= \int_Q \angles{f}^\sigma_Q 1_{Q} \dsigma \leq \int_Q \big( \sup_{\substack{Q\in\cd:\\\pif(Q)=F}}\angles{f}^\sigma_Q 1_{Q}\big) \dsigma.
$$
\end{itemize}

Combining \eqref{eq_rearrangement} and \eqref{eq_replacements} yields, by positivity,
$$
S_{G\subseteq F}\leq \sum_{F\in\cf} \int \big(\sum_{\substack{G\in\cg:\\ \pif(G)=F}} g_G \big) \big(\sum_{\substack{Q\in\cd: \\Q\subseteq F}} \lambda_Q \int_Q f_F \dsigma 1_Q \big)\domega.
$$
By definition, $T_F(f_F\sigma):= \sum_{\substack{Q\in\cd: \\Q\subseteq F}} \lambda_Q \int_Q f_F \dsigma 1_Q$. We write $G_F:= \sum_{\substack{G\in\cg:\\ \pif(G)=F}} g_G$.
By H\"older's inequality, the direct $L^\infty$ testing condition \eqref{eq_testing_direct}, and H\"older's inequality with the exponents $p$ and $q'$ (which holds because, by assumption, $\frac{1}{p}+\frac{1}{q'}\geq 1$), we obtain
\begin{equation}\label{eq_intermediate}
\begin{split}\
S_{G\subseteq F}&\leq \sum_{F\in\cf} \int G_F T_F(f_F\sigma)\domega\\
&\leq  \sum_{F\in\cf} \norm{G_F}_{\lqp} \norm{T_F(f_F\sigma)}_{\llq}\\
&\leq  \mathfrak{T}\sum_{F\in\cf} \norm{G_F}_{\lqp} \norm{f_F}_{L^\infty_C} \sigma(F)^{1/p}\\
&\leq \mathfrak{T}\Big(\sum_{F\in\cf} \norm{G_F}_{\lqp}^{q'} \Big)^{1/{q'}} \Big(\sum_{F\in\cf}\norm{f_F}_{L^\infty_C}^p \sigma(F) \Big)^{1/p}.
\end{split}
\end{equation}

Next, we estimate the second factor in the right-most side of the inequality \eqref{eq_intermediate}.
We now invoke the properties of the stopping cubes that are stated in Lemma \ref{lem_stoppingfamily}: The auxiliary function $f_F$ satisfies the $L^\infty$ estimate $$\norm{f_F }_{L^\infty_{C}}\leq 2 \angles{\abs{\bar{M}^\sigma f}_{C}}^\sigma_F,$$
and the collection $\cf$ is $\sigma$-sparse. Therefore, by the dyadic Carleson embedding theorem (Lemma \ref{lem_carlesonembedding}), and by the universal bound for the dyadic lattice Hardy--Littlewood maximal function (Theorem \ref{thm_universalbound}), we obtain
\begin{equation}\label{followingverbatim}
\begin{split}
&\Big( \sum_{F\in\cf} \norm{f_F}_{L^\infty_C}^p \sigma(F)\Big)^{1/p}\leq 2 \Big( \sum_{F\in\cf} \angles{\abs{\bar{M}^\sigma f}_C}^p \sigma(F)\Big)^{1/p}\leq 4p' \norm{\bar{M}^\sigma f}_{L^p_C(\sigma)}\\
&\leq 4p' \norm{\bar{M}^\sigma}_{L^p_C(\sigma)\to L^p_C(\sigma)} \norm{f}_{L^p_C(\sigma)}\lesssim_p \norm{\bar{M}}_{L^p_C\to L^p_C} \norm{f}_{L^p_C(\sigma)}.
\end{split}
\end{equation}

Finally, we estimate the first factor in the right-most side of the inequality \eqref{eq_intermediate}. Again, the collection $\cg$ is $\omega$-sparse. Using the $L^p$-variant of Pythagoras' theorem (Lemma \ref{lem_pythagoras}), and the rearrangement $\sum_{F\in\cf} \sum_{\substack{G\in\cg:\\ \pif(G)=F}}=\sum_{G\in\cg}$ yields
\begin{equation*}
\begin{split}
\Big(\sum_{F\in\cf} \norm{\sum_{\substack{G\in\cg:\\ \pif(G)=F}} g_G}_{\lqp}^{q'} \Big)^{1/{q'}}&\leq 3q' \Big(\sum_{G\in\cg} \norm{g_G}_{\lqp}^{q'} \Big)^{1/{q'}}. 
\end{split}
\end{equation*}
The proof is completed by the estimate
$$
\Big(\sum_{G\in\cg} \norm{g_G}_{\lqp}^{q'} \Big)^{1/{q'}} \leq 3q \norm{g}_{\lqp},
$$
which is checked as Lemma \ref{lem_decomposition}.

\end{proof}

\begin{lemma}\label{lem_decomposition}Let $1< p \leq \infty$. Let $\mu$ be a locally finite Borel measure. Let $E$ be a Banach space. Assume that $\cs$ is a sparse collection of dyadic cubes. Let $$f_S:=\sum_{S\in\chs(S)}\angles{f}_{S'}^\mu 1_{S'}+f1_{\es(S)}.$$ Then
$$
\Big(\sum_{S\in\cs}\norm{f_S}_{L^p_E(\mu)}^p\Big)^{1/p}\leq 3p' \norm{f}_{L^p_E(\mu)}.
$$
\end{lemma}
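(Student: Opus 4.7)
The plan is to expand the $L^p_E(\mu)$-norm of $f_S$ pointwise, sum over $S\in\cs$, and recognize the two resulting sums as (i) an $L^p$-norm over a disjoint union and (ii) a Carleson-embedding sum, each bounded directly by $\|f\|_{L^p_E(\mu)}$.

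First I would note that the two pieces defining $f_S$ have disjoint supports: the constant pieces $\langle f\rangle^\mu_{S'}1_{S'}$, for $S'\in\chs(S)$, live on the pairwise disjoint children, while $f1_{\es(S)}$ lives on $\es(S)=S\setminus \bigcup_{S'\in\chs(S)}S'$. Therefore
\[
\|f_S\|_{L^p_E(\mu)}^p=\sum_{S'\in\chs(S)}\bigl|\langle f\rangle^\mu_{S'}\bigr|_E^{\,p}\mu(S')+\int_{\es(S)}|f|_E^{\,p}\dmu.
\]
Summing over $S\in\cs$, the second contribution is at most $\|f\|_{L^p_E(\mu)}^p$, because the sets $\{\es(S)\}_{S\in\cs}$ are pairwise disjoint subsets of $\rd$. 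For the first contribution, the double sum $\sum_{S\in\cs}\sum_{S'\in\chs(S)}$ is majorized by $\sum_{S'\in\cs}$, so it suffices to estimate $\sum_{S'\in\cs}|\langle f\rangle^\mu_{S'}|_E^{\,p}\mu(S')$.

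Here I would use the Bochner-integral triangle inequality $|\langle f\rangle^\mu_{S'}|_E\leq \langle |f|_E\rangle^\mu_{S'}$ to reduce to scalar averages of $|f|_E$, and then invoke the dyadic Carleson embedding theorem (Lemma \ref{lem_carlesonembedding}), which applies since $\cs$ is sparse by hypothesis:
\[
\sum_{S'\in\cs}\bigl|\langle f\rangle^\mu_{S'}\bigr|_E^{\,p}\mu(S')\leq \sum_{S'\in\cs}\bigl(\langle|f|_E\rangle^\mu_{S'}\bigr)^p\mu(S')\leq (2p')^p\|f\|_{L^p_E(\mu)}^p.
\]
Combining the two bounds and extracting a $p$-th root yields $\bigl(\sum_{S}\|f_S\|_{L^p_E(\mu)}^p\bigr)^{1/p}\leq (1+(2p')^p)^{1/p}\|f\|_{L^p_E(\mu)}\leq (1+2p')\|f\|_{L^p_E(\mu)}\leq 3p'\|f\|_{L^p_E(\mu)}$, using $p'\geq 1$. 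The endpoint $p=\infty$ (for which $p'=1$) is immediate from $\|f_S\|_{L^\infty_E(\mu)}\leq \|f\|_{L^\infty_E(\mu)}$ and the fact that $(\sum_S\|\cdot\|^p)^{1/p}$ degenerates to $\sup_S\|\cdot\|$.

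I do not foresee a real obstacle: the only minor points are the passage from vector-valued averages to scalar averages of $|f|_E$ (handled by the triangle inequality for Bochner integrals) and the crude numerical inequality $1+2p'\leq 3p'$ used in the final step. Everything else is bookkeeping on the sparse family $\cs$.
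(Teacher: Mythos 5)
Your proof is correct and takes essentially the same route as the paper: disjointness of the children, disjointness of the $\es(S)$-sets, and the dyadic Carleson embedding theorem are the key facts in both. The only cosmetic difference is ordering — the paper applies Minkowski's inequality in $\ell^p(\cs;L^p_E(\mu))$ to split off the two contributions before invoking Carleson (directly yielding $1+2p'$), whereas you expand $\norm{f_S}_{L^p_E(\mu)}^p$ pointwise and add the two $p$-th powers at the end, which costs you the extra numerical step $(1+(2p')^p)^{1/p}\leq 1+2p'\leq 3p'$; both land on the same constant.
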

\begin{proof}Note that, for each $S$, the sets $\{S'\}_{S'\in\chs(S)}$ are pairwise disjoint, and the sets $\{\es(S)\}_{S\in\cs}$ are pairwise disjoint. Therefore, by H\"older's inquality, 
\begin{equation*}
\begin{split}
&\Big(\sum_{S\in\cs }\norm{f_S}_{L^p_E(\mu)}^p\Big)^{1/p}\\
&\leq \Big(\sum_{S\in\cs} \sum_{S'\in\chs(S)} \norm{\angles{f}_{S'}^\mu 1_{S'}}_{L^p_E(\mu)}^p\Big)^{1/p}+\norm{\sum_{S\in\cs} 1_{\es(S)} f}_{L^p_E(\mu)}\\
&\leq \Big(\sum_{S'\in\cs}  \big(\angles{\abs{f}_E}^\mu_{S'}\big)^p \mu(S')\Big)^{1/p}+\norm{f}_{L^p_E(\mu)}.
\end{split}
\end{equation*}
Using the dyadic Carleson embedding theorem (Lemma \ref{lem_carlesonembedding}) completes the proof.

\end{proof}
\subsection{Proof of the $A_\infty$ weights characterization}
In this subsection, we prove Theorem \ref{thm_dualpairingtesting}.
\begin{proof} Following verbatim the beginning of the proof of Theorem \ref{thm_twoweight} (in particular, the stopping families are defined similarly), we arrive at:
\begin{equation}\label{eq_B_rearragnement}
\begin{split}
S&:=\int g T(f\sigma) \domega=\sum_{Q\in\cd} \int_Q g \domega \lambda_Q \int_Q f \dsigma\\
&\leq \Big(\sum_{F\in\cf} \sum_{\substack{G\in\cg:\\ \pif(G)=F}}+\sum_{G\in\cg} \sum_{\substack{F\in\cf:\\\pig(F)=G}}\Big) \sum_{\substack{Q\in\cd:\\\pi(Q)=(F,G)}}\int_Q g \domega \lambda_Q \int_Q f \dsigma\\
&=:S_{G\subseteq F}+S_{G\supseteq F}.
\end{split}
\end{equation}
By symmetry, it suffices to consider the first summation $S_{G\subseteq F}$. 
Under the condition $\pi(Q)=(F,G)$, we obtain, by positivity, that 
\begin{equation}
\begin{split}\label{secondly}
\int_Q g \domega \lambda_Q \int_Q f \dsigma\leq \int_Q g_G \domega \lambda_Q \int_Q f_F \dsigma=\int g_G  \lambda_Q \big( \int_Q f_F  \dsigma \big) 1_Q \domega ,
\end{split}
\end{equation}
where $g_G:=\sup_{\substack{Q\in\cd:\\\pig(Q)=G}} \angles{g}^\omega_{Q} 1_{Q}$, and $f_F:=\sup_{\substack{Q\in\cd:\\\pif(Q)=F}}\angles{f}^\sigma_Q 1_{Q}.
$ 
Combining \eqref{eq_B_rearragnement} and \eqref{secondly} yields, by positivity,
\begin{equation}\label{summations}
\begin{split}
S_{G\subseteq F}&\leq \sum_{F\in\cf} \sum_{\substack{G\in\cg:\\ \pif(G)=F}} \int  g_G \, \big(\sum_{\substack{Q\in\cd:\\Q\subseteq G}} \lambda_Q \int_Q f_F \dsigma 1_Q \big) \domega.
\end{split}
\end{equation}

 By definition, $\sum_{\substack{Q\in\cd:\\Q\subseteq G}} \lambda_Q \int_Q f_F \dsigma 1_Q =:T_G(f_F\,\sigma)$. By the dual pairing $L^\infty$ testing condition \eqref{eq_dualpairingtestingcondition}, and by H\"older's inequality with the exponents $p$ and $q'$ (which holds because, by assumption, $\frac{1}{p}+\frac{1}{q'}\geq 1$) applied twice, we obtain
\begin{equation*}
\begin{split}
&S_{G\subseteq F}:=\sum_{F\in\cf} \sum_{\substack{G\in\cg:\\ \pif(G)=F}} \int  g_G \, T_G(f_F) \domega\\
&\leq \mathfrak{B} \sum_{F\in\cf} \norm{f_F}_{L^\infty_C} \sum_{\substack{G\in\cg:\\ \pif(G)=F}} \sigma(G)^{1/p} \norm{g_G}_{L^\infty_{D^*}} \omega(G)^{1/{q'}}\\
&\leq \mathfrak{B} \sum_{F\in\cf} \norm{f_F}_{L^\infty_C} \Big(\sum_{\substack{G\in\cg:\\ \pif(G)=F}} \sigma(G)\Big)^{1/p} \Big(\sum_{\substack{G\in\cg:\\ \pif(G)=F}} \norm{g_G}_{L^\infty_{D^*}}^{q'} \omega(G)\Big)^{1/{q'}}\\
&\leq \mathfrak{B} \Big( \sum_{F\in\cf} \norm{f_F}_{L^\infty_C}^p \big(\sum_{\substack{G\in\cg:\\ \pif(G)=F}} \sigma(G)\big)\Big)^{1/p} \Big(\sum_{F\in\cf}\sum_{\substack{G\in\cg:\\ \pif(G)=F}} \norm{g_G}_{L^\infty_{D^*}}^{q'} \omega(G)\Big)^{1/{q'}}.
\end{split}
\end{equation*}

Since $\cg$ is $\omega$-sparse, it is $\omega$-Carleson, which follows from the observation
$$
\sum_{\substack{G'\in \cg: \\ G'\subseteq G}}\omega(G')\leq  2\sum_{\substack{G'\in \cg: \\ G'\subseteq G}}\omega(\eg(G'))= 2\omega(\bigcup_{\substack{G'\in \cg: \\ G'\subseteq G}} \eg(G'))\leq 2 \omega(G).
$$ By assumption, $\sigma$ satisfies the $A_\infty$ condition with respect to $\omega$. By Lemma \ref{lem_ainfinitycarleson}, the $\omega$-Carleson collection $\cg$ is also $\sigma$-Carleson. Hence,
$$
\sum_{\substack{G\in\cg:\\ \pif(G)=F}} \sigma(G) \lesssim [\sigma]_{A_\infty(\omega)} \sigma(F).
$$ 
Moreover, $\sum_{F\in\cf}\sum_{\substack{G\in\cg:\\ \pif(G)=F}}=\sum_{G\in\cg}$. Altogether,
$$
S_{G\subseteq F} \leq 8  \mathfrak{B}  [\sigma]_{A_\infty(\omega)}^{1/p}\,\Big( \sum_{F\in\cf} \norm{f_F}_{L^\infty_C}^p \sigma(F)\Big)^{1/p} \Big(\sum_{G\in\cg} \norm{g_G}_{L^\infty_{D^*}}^{q'} \omega(G)\Big)^{1/{q'}}.
$$
The proof is completed by estimating each factor on the right-hand side of this inequality as in \eqref{followingverbatim}. 
\end{proof}

\section{Unweighted characterization under alternative assumptions}

In this section, we prove Theorem \ref{thm_alternativetesting}. First, we reduce the theorem  to the existence of an auxiliary collection $\cf$ of dyadic cubes, and an auxiliary family $\{f_F\}_{F\in\cf}$  of functions (Lemma \ref{lemma_existenceofcollection}).  Then, we construct these auxiliary quantities by using stopping conditions.
\subsection{Reduction to the existence of a stopping family}
\begin{lemma}[Reduction of the characterization]\label{lemma_existenceofcollection}Let $E$ be a Banach lattice. Let $1<p< t\leq \infty$. Let $f:\br^d\to E_+$ be a non-negative, locally integrable function. 

Assume that there exists a collection $\cf$ of dyadic cubes and a family $\{f_F\}_{F\in\cf}$ of auxiliary functions that satisfy the following properties:
\begin{itemize}
\item[a)] The family $\{f_F\}_{F\in\cf}$ satisfies the replacement rule:
\begin{equation}\label{eq_temporaryassumption_replacement}
\int_Q f \dmu \leq \int_Q f_F \dmu\quad \text{whenever $Q\in\cd$ and $F\in\cf$ such that $ \pif(Q)=F$}.
\end{equation}

\item[b)] The family $\{f_F\}_{F\in\cf}$ satisfies the norm estimate:
\begin{equation}\label{eq_temporaryassumption_averageestimate}
\norm{f_F}_{L^t_E(\mu)}\lesssim \angles{\abs{I f}_E}_F^\mu \mu(F)^{1/t}\quad\text{for every $F\in\cf$}.
\end{equation}
Here, $I:L^p_E(\mu) \to L^p_E(\mu)$ is an auxiliary operator that is bounded with $\norm{I}_{L^p_E(\mu) \to L^p_E(\mu)}\lesssim 1$. For example, $I$ can be the identity operator. 
\item[c)] We have the norm estimate:
\begin{equation}\label{condition_additional}
\norm{\sum_{\substack{Q\in\cd:\\\pif(Q)=F}}\lambda_Q \int_Q f \dmu 1_Q}_{L^\infty_E}\leq 4 \angles{\abs{T(f\mu)}_E}_F.
\end{equation}
\item[d)] The collection $\cf$ is sparse.
\end{itemize} 
Furthermore, assume that the operator $T(\cdotroomy \mu):L^p_E(\mu)\to L^p_E(\mu)$ satisfies the endpoint $L^t$ testing condition:
\begin{equation}\label{endpointlttestingcondition}
\norm{T_R(f\mu)}_{L^1_E(\mu)}\leq \mathfrak{B}_t  \norm{f}_{L^t_E(R,\mu)}\mu(R)^{1-1/t}
\end{equation}
for every $R\in\cd$, and $f\in L^t_E(R,\mu)$.

 Then, we have the norm estimate
$$
\norm{T(f\mu)}_{L^p_E(\mu)}\lesssim_p \mathfrak{B}_t\norm{f}_{L^p_E}.
$$
\end{lemma}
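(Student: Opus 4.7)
The plan is to split $T(f\mu)=\sum_{F\in\cf}\Lambda_F$, where
\[
\Lambda_F:=\sum_{\substack{Q\in\cd:\\\pif(Q)=F}}\lambda_Q\int_Q f\dmu\,1_Q,
\]
to estimate each $\Lambda_F$ in $L^p_E(\mu)$ by $L^\infty$--$L^1$ interpolation, and then to recombine via a sparse Pythagoras bound. The first observation is that $\Lambda_F$ is supported on $F$ and constant on every $F'\in\chf(F)$: for $x\in F'$ the cubes $Q$ with $\pif(Q)=F$ containing $x$ form the chain $\{Q\in\cd : F'\subsetneq Q\subseteq F\}$, which does not depend on $x$. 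By sparsity of $\cf$ (property (d)), the $L^p$-variant of Pythagoras' theorem (Lemma \ref{lem_pythagoras}) then yields
\[
\norm{T(f\mu)}_{L^p_E(\mu)}\lesssim_p\Bigl(\sum_{F\in\cf}\norm{\Lambda_F}_{L^p_E(\mu)}^p\Bigr)^{1/p}.
\]

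The $L^\infty$ bound for $\Lambda_F$ is exactly property (c): $\norm{\Lambda_F}_{L^\infty_E}\le 4\angles{\abs{T(f\mu)}_E}_F$. For the $L^1$ bound I would combine the positivity of the $\lambda_Q$ with the replacement rule (a) to obtain the lattice inequality $\Lambda_F\le T_F(f_F\,\mu)$; the endpoint $L^t$ testing condition \eqref{endpointlttestingcondition} applied with $R=F$, followed by the norm estimate (b), produces
\[
\norm{\Lambda_F}_{L^1_E(\mu)}\le\mathfrak{B}_t\norm{f_F}_{L^t_E(\mu)}\mu(F)^{1-1/t}\lesssim\mathfrak{B}_t\angles{\abs{If}_E}_F^\mu\,\mu(F).
\]
Since $\Lambda_F$ is supported on $F$, the elementary estimate $\norm{h}_{L^p_E(\mu)}^p\le\norm{h}_{L^\infty_E}^{p-1}\norm{h}_{L^1_E(\mu)}$ then gives
\[
\norm{\Lambda_F}_{L^p_E(\mu)}^p\lesssim\mathfrak{B}_t\,\angles{\abs{T(f\mu)}_E}_F^{p-1}\angles{\abs{If}_E}_F^\mu\,\mu(F).
\]

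I would finish by summing over $F\in\cf$, applying Hölder's inequality with exponents $\tfrac{p}{p-1}$ and $p$, and invoking the dyadic Carleson embedding theorem (Lemma \ref{lem_carlesonembedding}) separately on each factor: the $T(f\mu)$-factor is controlled by $\norm{T(f\mu)}_{L^p_E(\mu)}^{p-1}$, and the $If$-factor by $\norm{If}_{L^p_E(\mu)}\lesssim\norm{f}_{L^p_E(\mu)}$ thanks to the $L^p_E(\mu)$-boundedness of $I$ in (b). Both uses of Carleson embedding rely on sparsity of $\cf$. Collecting the estimates,
\[
\norm{T(f\mu)}_{L^p_E(\mu)}^p\lesssim_p\mathfrak{B}_t\norm{T(f\mu)}_{L^p_E(\mu)}^{p-1}\norm{f}_{L^p_E(\mu)},
\]
and since $\cd$ is finite the left side is a priori finite and can be absorbed, concluding the proof. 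The principal technical hurdle is verifying that $\Lambda_F$ is constant on each $\chf$-child, which is what unlocks Lemma \ref{lem_pythagoras}; a smaller point is interpreting the average in property (c) as a $\mu$-average so that it aligns with the Carleson embedding applied to $T(f\mu)$.
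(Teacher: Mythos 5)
Your proposal is correct and follows essentially the same route as the paper: split $T(f\mu)=\sum_F\Lambda_F$, invoke the $L^p$-Pythagoras lemma using sparsity and the fact that $\Lambda_F$ is constant on each stopping child, interpolate $\norm{\Lambda_F}_{L^p}^p\le\norm{\Lambda_F}_{L^\infty}^{p-1}\norm{\Lambda_F}_{L^1}$, control the two factors by property (c) and by the replacement rule plus endpoint $L^t$ testing plus property (b), and finish with H\"older, Carleson embedding, and absorption. Your final remark about the average in property (c) is well taken: the paper's displayed condition uses the Lebesgue average, but the subsequent Carleson-embedding step implicitly treats it as a $\mu$-average, which is a harmless typographical inconsistency rather than a gap.
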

\begin{proof}[Proof of Lemma \ref{lemma_existenceofcollection}] 

By the $L^p$ variant of Pythagoras' theorem (Lemma \ref{lem_pythagoras}), and by the replacement rule \eqref{eq_temporaryassumption_replacement}, we obtain
\begin{equation*}
\begin{split}
\norm{T(f\mu)}_{L^p_E(\mu)}&=\norm{\sum_{Q\in\cd} \lambda_Q \int_Q f\dmu 1_Q}_{L^p_E(\mu)}\\
&\lesssim_p \Big(\sum_{F\in\cf}\norm{\sum_{\substack{Q\in\cd:\\\pif(Q)=F}} \lambda_Q \int_Q f \dmu 1_Q }_{L^p_E(\mu)}^p\Big)^{1/p}\\
&\leq \Big(\sum_{F\in\cf}\norm{\sum_{\substack{Q\in\cd:\\\pif(Q)=F}} \lambda_Q \int_Q f \dmu 1_Q }_{L^\infty_E(\mu)}^{p-1} \norm{\sum_{\substack{Q\in\cd:\\\pif(Q)=F}} \lambda_Q \int_Q f_F \dmu 1_Q }_{L^1_E(\mu)}\Big)^{1/p}.
\end{split}
\end{equation*}
The first factor is estimated by the norm estimate \eqref{condition_additional}. For the second factor, from the endpoint $L^t$ testing condition \eqref{endpointlttestingcondition}, and the norm estimate for the auxiliary functions \eqref{eq_temporaryassumption_averageestimate}, it follows that
\begin{equation*}
\begin{split}
&\norm{\sum_{\substack{Q\in\cd:\\\pif(Q)=F}} \lambda_Q \int_Q f_F \dmu 1_Q }_{L^1_E(\mu)}\leq \norm{T_F(f_F \mu)}_{L^1_E(\mu)}\\
&\leq \mathfrak{B}_t \norm{f_F}_{L^t_E(\mu)} \mu(F)^{1-1/t}\lesssim \mathfrak{B}_t \sum_{F\in\cf} \angles{\abs{If}_E}^\mu_F  \mu(F).
\end{split}
\end{equation*}
Altogether,
\begin{equation*}
\begin{split}
\norm{T(f\mu)}_{L^p_E(\mu)}&\lesssim_p \mathfrak{B}_t^{1/p} \Big(\sum_{F\in\cf} \angles{\abs{T(f\mu)}_E}^{p-1}_F \mu(F)^{1/p'} \angles{\abs{If}}^\mu_F  \mu(F)^{1/p} \Big)^{1/p}.
\end{split}
\end{equation*}
By H\"older's inequality, the dyadic Carleson embedding theorem (Lemma \ref{lem_carlesonembedding}), and the assumption that $\norm{I}_{L^p_E(\mu)\to L^p_E(\mu)}\lesssim 1$, we obtain
\begin{equation*}
\begin{split}
\norm{T(f\mu)}_{L^p_E(\mu)}&\leq  \mathfrak{B}_t^{1/p} \bigg(\Big( \sum_{F\in\cf} \angles{\abs{T(f\mu)}_E}^{p}_F \mu(F)\Big)^{1/p} \bigg)^{1/p'}  \bigg(\Big( \sum_{F\in\cf} \big(\angles{\abs{If}}^\mu_F\big)^p  \mu(F) \Big)^{1/p} \bigg)^{1/p}\\
&\lesssim \norm{T(f\mu)}_{L^p_E(\mu)}^{1/p'} \big(\mathfrak{B}_t \norm{If}_{L^p_E(\mu)}\big)^{1/p}\\
&\lesssim \norm{T(f\mu)}_{L^p_E(\mu)}^{1/p'} \big(\mathfrak{B}_t \norm{f}_{L^p_E(\mu)}\big)^{1/p}. 
\end{split}
\end{equation*}
Dividing out the factor $\norm{T(f\mu)}_{L^p_E(\mu)}^{1/p'}$ completes the proof.
\end{proof}
\subsection{Table of stopping families}

Note that we can use multiple stopping conditions in order to use multiple auxiliary families of  functions, while keeping the estimate for each family of auxiliary functions and keeping the measure  condition (sparseness).  This is based on the following observations. Let $A$ and $B$ be conditions for cubes. (By {\it a condition for cubes} it is meant a condition such that of each cube it can be said whether the cube satisfies the condition or not.)
\begin{itemize}
\item (Keeping the measure condition) If $\ch_{\cf_A}(F)$ is the collection of all the maximal $F'\in\{F'\in\cd : F'\subseteq F\}$ that satisfy the  condition A, and $\ch_{\cf_{B}}(F)$ is the collection of all the maximal  $F'\in\{F'\in\cd : F'\subseteq F\}$ that satisfy the condition $B$, then the collection $\chf(F)$ of all the maximal $F'\in\{F'\in\cd : F'\subseteq F\}$ that satisfy the condition $A$ or the condition $B$ is the union $\chf(F)=\ch_{\cf_A}(F)\bigcup \ch_{\cf_B}(F)$. We have the measure condition:
$$
\sum_{F'\in \chf(F)}\mu(F')\leq \sum_{F'\in \ch_{\cf_A}(F)}\mu(F')+ \sum_{F'\in \ch_{\cf_B}(F)}\mu(F').
$$
\item (Keeping the estimate for each family of auxiliary functions) If $Q\in\{Q\in\cd: Q\subseteq F\}$ is such that $Q\subseteq F'$ for no $F'\in \ch_{\cf_A}(F)\bigcup \ch_{\cf_B}(F)$, then, by maximality, $Q$ satisfies neither the condition A nor the condition B. 
\end{itemize}
Now, by the reduction (Lemma \ref{lemma_existenceofcollection}), Theorem \ref{thm_alternativetesting} follows from using the stopping conditions of Table \ref{table_stopping conditionss}, tailored for each assumption: 
\begin{itemize}
\item[i)] Assume the Hardy--Littlewood property: Use the stopping condition and the auxiliary family A together with the stopping condition D.  (That is, the stopping children $\chf(F)$ of $F$ is defined as the collection of all the maximal dyadic cubes $F'\in\{F'\in Q: F'\subseteq F\}$ that satisfy the stopping condition A or the stopping condition D. The auxiliary collection $\cf$ is the collection defined by $\chf$. The auxiliary family $\{f_F\}_{F\in\cf}$ is the family A.) 
\item[ii)] Assume that the measure is doubling: Use the stopping condition and the auxiliary family B together with the stopping condition D.
\item[iii)] Assume the $L^t$ testing condition:  Use the stopping condition and the auxiliary family C together with the stopping condition D.
\end{itemize}

\begin{table}
\small
\caption{Let $E$ be a Banach lattice, $\mu$ a locally finite Borel measure, and $f:\br^d\to E_+$ a positive, locally integrable function. Let $F\in \cd$. The stopping children $\chf(F)$ of $F$ determined by a stopping condition is defined as the collection of all the maximal $F'\in\{F'\in\cd: F'\subseteq F\}$ that satisfy the stopping condition. The family $\{Q\in\cd :\pif(Q)=F\}$ is the collection of all $Q\in\{Q\in\cd: Q\subseteq F\}$ such that $Q\subseteq F'$ for no $F'\in\chf(F)$. In particular, $ \pif(Q)=F$ implies that $Q$ does not satisfy the stopping condition. The properties listed in the table are proven in Lemma \ref{lem_stoppingfamily}, Lemma \ref{stoppingfamily_bootstrap}, and Lemma \ref{lem_stoppingfamilyprincipal}.}
\label{table_stopping conditionss}
\begin{tabular}{ |l l l| }
 \hline
 \multicolumn{3}{|l|}{}\\
\multirow{3}{*}{A} & Stopping condition  & $\abs{\sup_{\substack{Q\in\cd:\\Q\supseteq F'}}\angles{f}^\mu_Q}_E>4 \angles{\abs{\mbar^\mu f}_E}^\mu_F$.\\[0.3cm]
  
  &Auxiliary function &$f_F:= \sup_{\substack{Q\in\cd:\\\pif(Q)=F}} \angles{f}^\mu_Q 1_Q.$ \\[0.3cm]
  & Estimate&  $\norm{f_F}_{L^\infty_E(\mu)}\leq4 \angles{\abs{\bar{M}^\mu f}_E}^\mu_F$. \\[0.3cm]
\hline
 \multicolumn{3}{|l|}{}\\
  \multirow{3}{*}{A'}&Stopping condition & $
\abs{\sup_{\substack{Q\in\cd:\\F\supseteq Q\supseteq F'}}\angles{f}^\mu_Q}_E \geq 4 \norm{\bar{M}^\mu}_{L^1_E(\mu)\to L^{1,\infty}_E(\mu)} \angles{\abs{f}_E}^\mu_F
$\\[0.3cm]
&  Auxiliary function &$f_F:= \sup_{\substack{Q\in\cd:\\\pif(Q)=F}} \angles{f}^\mu_Q 1_Q.$ \\[0.3cm]
 & Estimate&  $\norm{f_F}_{L^\infty_E}\leq 4 \norm{\bar{M}^\mu}_{L^1_E(\mu)\to L^{1,\infty}_E(\mu)} \angles{\abs{f}_E}^\mu_F$ \\[0.3cm]
 \hline
  \multicolumn{3}{|l|}{}\\
\multirow{4}{*}{B} & Stopping condition & $\angles{\abs{f}_E}_{F'}^\mu > 4 \angles{\abs{f}_E}^\mu_{F}$.\\[0.3cm]
  &Auxiliary function &$f_F:=\sum_{F'\in\chf(F)} \angles{f}^\mu_{F'}1_{F'} +f1_{E(F)}$ \\[0.3cm]
 & Estimate&  $\norm{f_F}_{L^\infty_E}\leq 4 \big( \sup_{F'\in\chf(F)} \frac{\mu(\hat{F'})}{\mu(F')}\big) \angles{\abs{ f}_E}^\mu_F$,\\[0.3cm]
   & &where $\hat{F'}$ denotes the dyadic parent of $F'$. \\[0.3cm]
    \hline
     \multicolumn{3}{|l|}{}\\
 \multirow{4}{*}{C} & Stopping condition & $ \angles{\abs{f}_E}_{F'}^\mu > 4 \angles{\abs{f}_E}^\mu_{F}.$\\[0.3cm]
  &Auxiliary function &$f_F:=\sum_{F'\in\chf(F)} \int_{F'} f \dmu \frac{1_{\hat{F'}}}{\mu(\hat{F'})}+f1_{E(F)}$, \\[0.3cm]
 & &where $\hat{F'}$ denotes the dyadic parent of $F'$.\\[0.3cm]
 & Estimate&  $\norm{f_F}_{L^t}\lesssim_t \angles{\abs{ f}_E}^\mu_F \mu(F)^{1/t}.$\\[0.3cm]
  \hline \multicolumn{3}{|l|}{}\\
     \multirow{3}{*}{D} & Stopping condition & $\abs{\sum_{\substack{Q\in\cd: \\Q\supseteq F'}} \lambda_Q \int_Q f \dmu}_E>4 \angles{\abs{T_\lambda (f \mu)}_E}_F.$ \\[0.4cm]
     & Auxiliary function & $f_F:=\sum_{\substack{Q\in\cd:\\\pif(Q)=F}}\lambda_Q \int_Q f \dmu 1_Q$.\\[0.4cm]
  & Estimate&  $\norm{f_F}_{L^\infty_E(\mu)}\leq 4 \angles{\abs{T_\lambda (f \mu)}_E}_F.$ \\[0.4cm]
    \hline  \multicolumn{3}{|l|}{}\\
    \multicolumn{3}{| p{\textwidth} |}{In the cases A, B, and C, the auxiliary function $f_F$ satisfies the replacement rule: $$\int_Q f\dmu \leq \int_Q f_F \dmu\text{ whenever } \pif(Q)=F.$$
    The stopping children $\chf(F)$ determined by each stopping condition satiesfies the measure condition (sparseness): 
    $$
    \sum_{F'\in\chf(F)}\mu(F')\leq \frac{1}{4}\mu(F).
    $$}\\[0.5cm]
    \hline
\end{tabular}
\end{table}
\newpage
\begin{lemma}[Particular stopping family]\label{stoppingfamily_bootstrap}Let $\mu$ be a locally finite Borel measure. Let $E$ be a Banach lattice. Let $\cd$ be a finite collection of dyadic cubes. Let $f:\br^d\to E$ be a non-negative, locally integrable function. 

For each $F\in\cd$, the stopping children $\ch(F)$ of $F$ is defined as the collection of all the maximal $F'\in\{F'\in\cd : F'\subseteq F\}$ that satisfy the stopping condition
\begin{equation}\label{auxiliary_stoppingcondition}
\abs{\sum_{\substack{Q\in\cd: \\Q\supseteq F'}} \lambda_Q \int_Q f \dmu}_E>4 \angles{\abs{\sum_{Q\in\cd}\lambda_Q \int_Q f \dmu 1_Q}_E}_F.
\end{equation}
Recall that $\{Q\in\cd: \pif(Q)=F\}$ denotes the collection of all $Q\in \{Q\in\cd : Q\subseteq F \}$ such that $Q\subseteq F'$ for no $F'\in\chf(F)$. 

Then, 
\begin{equation}\label{stopfam_measure}
\sum_{F'\in\chf(F)} \mu(F')\leq \frac{1}{4}\mu(F),
\end{equation}
and
\begin{equation}\label{stopfam_estimate}
\norm{\sum_{\substack{Q\in\cd:\\\pif(Q)=F}}\lambda_Q \int_Q f \dmu 1_Q}_{L^\infty_E}\leq 4 \angles{\abs{\sum_{Q\in\cd}\lambda_Q \int_Q f \dmu 1_Q}_E}_F.
\end{equation}
\end{lemma}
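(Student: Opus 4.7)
The plan is to mimic the proof of Lemma \ref{lem_stoppingfamily}, using positivity of the operators $\lambda_Q$ and of $f$ in place of the monotonicity of lattice averages that was used there. Introduce the shorthand $\Lambda_{F'}:=\sum_{Q\in\cd,\,Q\supseteq F'}\lambda_Q\int_Q f\dmu\in E_+$; this vector is non-negative because each $\lambda_Q$ is a positive operator and $f\geq 0$, and with this notation the stopping condition \eqref{auxiliary_stoppingcondition} reads $\abs{\Lambda_{F'}}_E>4\angles{\abs{T_\lambda(f\mu)}_E}_F$.

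For the $L^\infty$ estimate \eqref{stopfam_estimate}, fix $x\in\bigcup_{Q:\,\pif(Q)=F}Q$ and let $Q_x\in\cd$ be the minimal cube (which exists as $\cd$ is finite) with $\pif(Q_x)=F$ and $Q_x\ni x$. By dyadic nestedness and the minimality of $Q_x$, every $Q\in\cd$ with $\pif(Q)=F$ and $x\in Q$ must contain $Q_x$; hence, by positivity of the summands,
\begin{equation*}
\sum_{\substack{Q\in\cd:\,\pif(Q)=F,\\x\in Q}}\lambda_Q\int_Q f\dmu\;\leq\;\sum_{\substack{Q\in\cd:\\Q\supseteq Q_x}}\lambda_Q\int_Q f\dmu\;=\;\Lambda_{Q_x}
\end{equation*}
in the lattice order of $E$. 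Because $\pif(Q_x)=F$, the cube $Q_x$ fails the stopping condition (otherwise $Q_x$ would lie in some element of $\chf(F)$, contradicting $\pif(Q_x)=F$), so $\abs{\Lambda_{Q_x}}_E\leq 4\angles{\abs{T_\lambda(f\mu)}_E}_F$; passing from the lattice inequality to the scalar one via the Banach-lattice monotonicity $0\leq a\leq b\Rightarrow\abs{a}_E\leq\abs{b}_E$ yields \eqref{stopfam_estimate}.

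For the sparseness \eqref{stopfam_measure}, I observe that for each $F'\in\chf(F)$ and each $x\in F'$, positivity gives $T_\lambda(f\mu)(x)=\sum_{Q\ni x}\lambda_Q\int_Q f\dmu\geq\Lambda_{F'}$, and hence $\abs{T_\lambda(f\mu)(x)}_E\geq\abs{\Lambda_{F'}}_E$. Integrating over $F'$ and invoking the stopping condition at $F'$ produces $\int_{F'}\abs{T_\lambda(f\mu)}_E\dmu>4\mu(F')\angles{\abs{T_\lambda(f\mu)}_E}_F$. Summing over the pairwise disjoint $F'\in\chf(F)$ and comparing with $\int_F\abs{T_\lambda(f\mu)}_E\dmu=\mu(F)\angles{\abs{T_\lambda(f\mu)}_E}_F$ gives $4\sum_{F'\in\chf(F)}\mu(F')\leq\mu(F)$ after dividing out the average (the degenerate case $\angles{\abs{T_\lambda(f\mu)}_E}_F=0$ forces $\chf(F)=\emptyset$ and the conclusion is trivial, since the stopping condition would then require $\abs{\Lambda_{F'}}_E>0$ while $\abs{T_\lambda(f\mu)(x)}_E\geq\abs{\Lambda_{F'}}_E$ on $F'$ contradicts $\int_F\abs{T_\lambda(f\mu)}_E\dmu=0$).

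There is no genuine obstacle; the two ingredients doing all the work are positivity, which orders the partial sums $\Lambda_{F'}$ pointwise against the value $T_\lambda(f\mu)(x)$, and the Banach-lattice monotonicity of $\abs{\,\cdotroomy\,}_E$ on $E_+$, which transfers these lattice inequalities to scalar ones amenable to comparison with the stopping threshold.
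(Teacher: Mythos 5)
Your proof is correct and follows essentially the same approach as the paper's: positivity of the operators $\lambda_Q$ and of $f$ gives the pointwise lattice inequality $T_\lambda(f\mu)(x)\geq\Lambda_{F'}$ on $F'$, which transfers to $\abs{T_\lambda(f\mu)(x)}_E\geq\abs{\Lambda_{F'}}_E$ by Banach-lattice monotonicity, and the sparseness then follows by integrating the stopping condition over the disjoint children and dividing out the average, while the $L^\infty$ bound follows from the failure of the stopping condition at the minimal cube $Q_x$ through $F$ containing a given point. Your explicit handling of the degenerate case $\angles{\abs{T_\lambda(f\mu)}_E}_F=0$ is a small addition the paper leaves implicit, but the substance of the argument is identical.
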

\begin{proof}First, we check \eqref{stopfam_measure}. By the stopping condition \eqref{auxiliary_stoppingcondition}, 
\begin{equation*}
\begin{split}
\angles{\abs{\sum_{Q\in\cd}\lambda_Q \int_Q f \dmu 1_Q}_E}_F&\geq \sum_{F'\in\chf(F)}  \frac{\mu(F')}{\mu(F)} \abs{\sum_{\substack{Q\in\cd:\\Q\supseteq F'}}\lambda_Q \int_Q f \dmu }_E\\
&\geq  \sum_{F'\in\chf(F)} \frac{\mu(F')}{\mu(F)}4 \angles{\abs{\sum_{Q\in\cd}\lambda_Q \int_Q f \dmu 1_Q}_E}_F.
\end{split}
\end{equation*}
Dividing out the factor $\angles{\abs{\sum_{Q\in\cd}\lambda_Q \int_Q f \dmu 1_Q}_E}_F$ yields $\sum_{F'} \mu(F')\leq \frac{1}{4} \mu(F)$.

Finally, we check \eqref{stopfam_estimate}. Fix  $x\in\bigcup_{Q\in\cd: \pif(Q)=F}$. Let $Q_x\in\cd$ be the minimal dyadic cube (which exists because, by assumption, the collection $\cd$ is finite) such that $\pif(Q)=F$ and $Q\ni x$. Note that $\pif(Q)=F$ implies that $Q$  does not satisfy thestopping condition \eqref{auxiliary_stoppingcondition}. Therefore,
$$
\abs{\sum_{\substack{Q\in\cd:\\\pif(Q)=F}} \lambda_Q \int_Q f \dmu 1_Q(x)}_E\leq  \abs{\sum_{\substack{Q\in\cd:\\ Q\supseteq Q_x}} \lambda_Q \int_Q f \dmu}\leq 4 \angles{\abs{\sum_{Q\in\cd}\lambda_Q \int_Q f \dmu 1_Q}_E}_F.
$$
\end{proof}

A collection $\cd$ of dyadic cubes is {\it a truncated dyadic system} if $$\cd=\{Q : Q\subseteq Q_0, \ell(Q)\geq 2^{-N} \ell(Q_0)\}$$
for some dyadic cube $Q_0$ and some non-negative integer $N$. Let $\cd_*$ denote the collection of all the minimal dyadic cubes in a collection $\cd$ of dyadic cubes. Define the finest averaging by $$\be_{\cd_*}^\mu f:=\sum_{Q\in\cd_*} \angles{f}_Q^\mu 1_Q.$$

\begin{lemma}[Properties of {\it the Muckenhoupt--Wheeden principal cubes}]\label{lem_stoppingfamilyprincipal}Let $E$ be a Banach lattice. Let $\mu$ be a locally finite Borel measure. Let $\cd$ be a truncated dyadic system. Let $f:\br^d\to E_+$ be a locally integrable, non-negative function. 

For each dyadic cube $F\in\cd$, the stopping children $\chf(F)$ of $F$ is defined as the collection of all the maximal dyadic cubes $F'\in\{F'\in\cd : F'\subseteq F \}$ that satisfy the stopping condition
\begin{equation}
\angles{\abs{f}_E}_{F'}^\mu>2 \angles{\abs{f}_E}^\mu_F.
\end{equation}
Recall that $\{Q\in\cd: \pif(Q)=F\}$ denotes the collection of all $Q\in \{Q\in\cd : Q\subseteq F \}$ such that $Q\subseteq F'$ for no $F'\in\chf(F)$. Then, \begin{itemize}
\item[a)]The stopping children are sparse:
$$\sum_{F'\in\chf(F)} \mu(F')\leq \frac{1}{2}\mu(F).$$
\item[b)]The terms of the auxiliary functions satisfy the norm estimates: 
\begin{subequations}
\begin{align}
\label{propertyfirst}\norm{\be_{\cd_*}^\mu f1_{\ef(F)}}_{L^\infty_E}&\lesssim  \angles{\abs{f}_E}_F^\mu\\
\label{propertysecond}\norm{\sum_{F'\in\chf(F)} \angles{f}_{F'}^\mu1_{F'}}_{L^\infty_E}&\lesssim \big(\sup_{F'\in\chf(F)}\frac{\mu(\hat{F'})}{\mu(F')} \big)\angles{\abs{f}_E}_F^\mu\\
\label{propertythird}\norm{\sum_{F'\in\chf(F)} \int_{F'} f \dmu \frac{1_{\hat{F'}}}{\mu(\hat{F'})}}_{L^t_E}&\lesssim_t \angles{\abs{f}_E}_F^\mu \mu(F)^{1/t},
\end{align}
\end{subequations}
where $\hat{F'}$ denotes the dyadic parent of $F'$.
\item[c)]The auxiliary functions satisfy the replacement rules: 
\begin{equation}
\label{eq_principal_replacament}
\int_Q f \dmu \leq \int_Q f_F \dmu\quad\text{ whenever $\pif(Q)=F$},
\end{equation}
for the auxiliary function $$f_F:=\sum_{F'\in\chf(F)} \angles{f}^\mu_{F'} 1_{F'}+f1_{\ef(F)},$$ and for the auxiliary function $$f_F:=\sum_{F'\in\chf(F)} \int_{F'} f \dmu \frac{1_{\hat{F'}}}{\mu(\hat{F'})}+f1_{\ef(F)}.$$
\end{itemize}
\end{lemma}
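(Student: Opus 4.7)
My plan is to verify (a), (b), and (c) from a single organizing principle: whenever $\pif(Q)=F$, the cube $Q$ must fail the stopping condition (otherwise $Q$ would lie inside some $F'\in\chf(F)$), so $\angles{\abs{f}_E}_Q^\mu\le 2\angles{\abs{f}_E}_F^\mu$. The overall strategy mirrors Lemma~\ref{lem_stoppingfamily}, adapted to the Muckenhoupt--Wheeden scalar stopping condition.

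For (a), the union $\bigcup_{F'\in\chf(F)}F'$ is contained in the superlevel set $\{M^\mu(1_F\abs{f}_E)>2\angles{\abs{f}_E}_F^\mu\}$, which by the weak-$(1,1)$ bound (constant one) for the dyadic scalar maximal operator has $\mu$-measure at most $\tfrac12\mu(F)$. For \eqref{propertyfirst}, each minimal $Q\in\cd_*$ contained in $\ef(F)$ has $\pif(Q)=F$, so the organizing principle yields $\abs{\angles{f}_Q^\mu}_E\le\angles{\abs{f}_E}_Q^\mu\le 2\angles{\abs{f}_E}_F^\mu$; disjointness of the minimal cubes promotes this to the pointwise $L^\infty_E$ bound. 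For \eqref{propertysecond}, the dyadic parent $\hat{F'}$ of a stopping child fails the stopping condition by maximality, so $\angles{\abs{f}_E}_{\hat{F'}}^\mu\le 2\angles{\abs{f}_E}_F^\mu$; combining with the positivity inequality $\int_{F'}\abs{f}_E\dmu\le\int_{\hat{F'}}\abs{f}_E\dmu$ gives $\abs{\angles{f}_{F'}^\mu}_E\le 2(\mu(\hat{F'})/\mu(F'))\angles{\abs{f}_E}_F^\mu$, and the disjointness of the $F'$'s closes the $L^\infty_E$ estimate.

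The third estimate \eqref{propertythird} is the technical heart of the lemma. Grouping stopping children by their common parent $G=\hat{F'}$ rewrites the sum as $\sum_G a_G 1_G$ with $\abs{a_G}_E\le\angles{\abs{f}_E}_G^\mu\le 2\angles{\abs{f}_E}_F^\mu$. I would bound the $L^t_E(\mu)$ norm by duality: for a positive test function $h\in L^{t'}(\mu)$ one computes
\[
\int h\,f_F\dmu=\sum_{F'\in\chf(F)}\int_{F'}f\dmu\cdot\angles{h}_{\hat{F'}}^\mu,
\]
and then uses $\int_{F'}f\dmu\le\int_{\hat{F'}}f\dmu$ together with $M^\mu h\ge\angles{h}_{\hat{F'}}^\mu$ on $\hat{F'}$ to transfer the maximal operator onto the test function, yielding a bound by $\sum_{F'}\int_{\hat{F'}}\abs{f}_E\,M^\mu h\dmu$. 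Combining this with the uniform $L^\infty$ bound $\abs{a_G}_E\le 2\angles{\abs{f}_E}_F^\mu$, a level-wise aggregation (parents at any fixed dyadic scale being pairwise disjoint), H\"older's inequality, and the $L^{t'}(\mu)$-boundedness of $M^\mu$ (operator norm at most $t$) delivers $\norm{f_F}_{L^t_E(\mu)}\lesssim_t\angles{\abs{f}_E}_F^\mu\mu(F)^{1/t}$.

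Finally, (c) is a bookkeeping consequence of dyadic nesting: if $\pif(Q)=F$ and $F'\in\chf(F)$ meets $Q$, then $F'\subsetneq Q$ (the alternative $Q\subseteq F'$ would force $\pif(Q)\subsetneq F$), and consequently $\hat{F'}\subseteq Q$ as well. The partition $Q=(Q\cap\ef(F))\sqcup\bigsqcup_{F'\subseteq Q}F'$ and direct integration of either candidate $f_F$ over $Q$ recover $\int_Q f\dmu$ (for the parent-spread version this uses $\mu(\hat{F'}\cap Q)=\mu(\hat{F'})$). The main obstacle is the $L^t$ bound: the parents $\hat{F'}$ form neither a disjoint nor a sparse collection, and the ratio $\mu(\hat{F'})/\mu(F')$ may be arbitrarily large for non-doubling $\mu$, so one must carefully combine the negative exponent $1/t-1$ gained from spreading each $\int_{F'}f\dmu$ over $\hat{F'}$ with the uniform truncation of averages across dyadic levels.
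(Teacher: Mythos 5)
Your treatments of (a), \eqref{propertyfirst}, \eqref{propertysecond}, and (c) match the paper's proof step-for-step: sparseness from the scalar weak-$(1,1)$ bound (or, equivalently, the disjointness-plus-Chebyshev bookkeeping), the $L^\infty$ bounds from the contraposition ``$\pif(Q)=F$ implies $\angles{\abs{f}_E}_Q^\mu\leq 2\angles{\abs{f}_E}_F^\mu$'' together with $\angles{\abs{f}_E}^\mu_{\hat{F'}}\leq 2\angles{\abs{f}_E}^\mu_F$ by maximality, and (c) from dyadic nestedness. You correctly identify that for the parent-spread auxiliary function one needs $\hat{F'}\subseteq Q$, which follows from $F'\subsetneq Q$.

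The gap is in \eqref{propertythird}, exactly where you flag it as ``the technical heart'' and ``the main obstacle.'' The paper does not attempt to prove this estimate from scratch: it simply invokes Lemma~\ref{elegantestimate} (L\'opez-S\'anchez, Martell, Parcet, Lemma~3.3), applied to $h=\abs{f}_E$, combined with the two consequences of the stopping condition $\sup_{F'}\angles{\abs{f}_E}_{\hat{F'}}^\mu\leq 2\angles{\abs{f}_E}_F^\mu$ and $\int_{\bigcup F'}\abs{f}_E\dmu\leq\angles{\abs{f}_E}_F^\mu\mu(F)$. Your sketched duality argument does not reach the same conclusion. After transferring the maximal operator onto the test function and applying H\"older with $\norm{M^\mu}_{L^{t'}(\mu)\to L^{t'}(\mu)}\leq t$, the best you obtain is $\norm{\abs{f}_E 1_F}_{L^t(\mu)}$, and this is \emph{not} in general $\lesssim\angles{\abs{f}_E}_F^\mu\mu(F)^{1/t}$ (take $\abs{f}_E$ concentrated on a small subset of $F$). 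The additional inputs you list (the uniform bound $\abs{a_G}_E\leq 2\angles{\abs{f}_E}_F^\mu$ and ``level-wise aggregation'') do not close the gap: parents at a fixed dyadic generation are disjoint, but there are unboundedly many generations, and the parents across generations nest, so the pointwise $L^\infty$ bound of $\sum_{F'}\int_{F'}f\dmu\frac{1_{\hat{F'}}}{\mu(\hat{F'})}$ can be large. Moreover, your earlier step of replacing $\int_{F'}f\dmu$ by $\int_{\hat{F'}}f\dmu$ discards the disjointness of the $F'$'s, which is precisely the structure the LSMP lemma exploits. The honest resolution is either to cite Lemma~\ref{elegantestimate} directly (as the paper does) or to supply a complete proof of it; the argument you outline is not that proof and, as written, does not yield \eqref{propertythird}.
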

\begin{proof}
First, we check the inequality \eqref{propertyfirst}.  By maximality, if $ Q\subseteq F$ satisfies $\angles{\abs{f}_E}_{Q}^\mu > 2 \angles{\abs{f}_E}^\mu_{F}$, then $Q\subseteq F'$ for some $F'\in\chf(F)$. By contraposition, if $Q\subseteq F$ and there is no $F'\in\ch(F)$ such that $Q\subseteq F'$, then $Q$ satisfies $\angles{\abs{f}_E}_{Q}^\mu \leq 2 \angles{\abs{f}_E}^\mu_{F}$. Note that $$\ef(F)=\bigcup_{\substack{Q\in\cd_* : Q\subseteq F \text{ but } \\ \text{$Q\subseteq F'$ for no $F'\in\chf(F)$}}} Q.$$ Therefore,
$$
\abs{\be_{\cd_*}^\mu f}_E1_{\ef(F)}\leq  \sum_{\substack{Q\in\cd_* : Q\subseteq F \text{ but } \\ \text{$Q\subseteq F'$ for no $F'\in\chf(F)$}}} \angles{\abs{f}_E}^\mu_Q 1_Q \leq 2 \angles{\abs{f}_E}^\mu_F.
$$

Next, we check the inequality \eqref{propertysecond}. On the one hand, $\angles{\abs{f}_E}_{F'}^\mu\leq \frac{\mu(\hat{F'})}{\mu(F')} \angles{\abs{f}_E}_{\hat{F'}}^\mu$, and, on the other hand, by the stopping condition, $\angles{\abs{f}_E}^\mu_{\hat{F'}} \leq 2 \angles{\abs{f}_E}^\mu_F$; combining these estimates yields the inequality \eqref{propertysecond}.

Next, we note that the inequality \eqref{propertythird} follows from Lemma \ref{elegantestimate} together with the stopping condition:
\begin{equation*}
\begin{split}
\norm{\sum_{F'\in\chf(F)} \int_{F'} f \dmu \frac{1_{\hat{F'}}}{\mu(\hat{F'})}}_{L^t_E(\mu)}&\lesssim_t \big(\sup_{F'\in\chf(F)}\, \angles{\abs{f}_E}_{\hat{F'}}^\mu\big)^{1/t'} \big(\int_{\bigcup_{F'} F'} \abs{ f}_E \dmu \big)^{1/t}\\
&\leq 2^{1/t'} \angles{\abs{f}_E}_{F}^\mu \mu(F)^{1/t}.
\end{split}
\end{equation*}

Finally, we check the replacement rule \eqref{eq_principal_replacament}. Assume that $\pif(Q)=F$. We write
$$
\int_Q f \dmu =\sum_{F'\in\chf(F)}  \int_Q f 1_{F'} \dmu +\int_Q 1_{\ef(F)} f \dmu.
$$ Assume that $Q$ and $F'$ are such that $F'\cap Q\neq \emptyset$. Then, by dyadic nestedness, either $F'\subsetneq Q$ or $Q\subseteq F'$, the latter of which is excluded by the condition $\pif(Q)=F$. Therefore, $F'\subsetneq Q$ (and, hence, $\hat{F'}\subseteq Q$). Now,
$$
\int_Q f 1_{F'} \dmu = \int_{F'} f \dmu =\int_Q \angles{f}^\mu_{F'} 1_{F'} \dmu,$$
and
$$\int_Q f 1_{F'} \dmu= \int_Q  \big(\int_{F'} f \dmu\big) \frac{1_{\hat{F'}}}{\mu(\hat{F'})}\dmu.
$$
\end{proof}
\begin{remark}We note that if the collection $\cd$ is such that it contains cubes $Q\in\cd$ shrinking to almost every point $x\in\ef(F)$, then, by the Lebesgue differentiation theorem,  $$\abs{f}_E1_{\ef(F)} =\lim_{N\to \infty} \abs{\be^\mu _{\{Q\in\cd: \ell(Q)\geq 2^{-N}\}}f}_E1_{\ef(F)} \leq 2 \angles{\abs{f}_E}^\mu_F.$$ The finest averaging operator $\be_{\cd_*}^\mu$ appears in the lemma because we assume that the collection $\cd$ is finite (and, therefore, has no shrinking cubes). 

This appearance is harmless when we are considering quantities that only take into account the finest averaging: For example, $\norm{\be_{\cd_*}^\mu f}_{L^p_E(\mu)}\leq \norm{f}_{L^p_E(\mu)}$, and, whenever $\cd$ is a truncated dyadic system,
 $$\mbar^\mu_\cd f=\mbar^\mu_\cd (\be_{\cd_*}^\mu f),\quad\text{and}\quad T_{\cd}(f\mu)= T_{\cd}((\be_{\cd_*}^\mu f)\mu).
$$
Observe that, in the definition $
T_{\cd}(f\mu):=\sum_{Q\in\cd}\lambda_Q \int_Q f \dmu 1_Q,
$
we may assume that $\cd$ is a truncated dyadic system (by including some zero coefficients $\lambda_Q$, if necessary).

\end{remark}

\begin{lemma}[Lemma 3.3 in \cite{lopezsanchez2012}, by L\'opez-S\'anchez, Martell, and Parcet]\label{elegantestimate}Let $1\leq p<\infty$. Let $\mu$ be a locally finite Borel measure. Let  $h$ be a non-negative real-valued function. Let $\{R\}$ be a collection of pairwise disjoint dyadic cubes. Then
$$
\norm{\sum_{R} \int_{R} h \dmu \frac{1_{\hat{R}}}{\mu(\hat{R})}}_{L^p(\mu)}\lesssim_p \big(\sup_{R}\, \angles{h}_{\hat{R}}^\mu\big)^{1/p'} \big(\int_{\bigcup_{R} R} h \dmu \big)^{1/p}.
$$
\end{lemma}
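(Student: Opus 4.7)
The plan is to use $L^p$--$L^{p'}$ duality to reduce the estimate to a Carleson embedding inequality for a sequence indexed by the (distinct) parent cubes. Set $B_0 := \sup_R \angles{h}_{\hat R}^\mu$, let $\mathcal P := \{\hat R : R \in \text{the given collection}\}$, and for each $P \in \mathcal P$ collect
$$
a_P := \sum_{R :\, \hat R = P} \int_R h \dmu,
$$
so that the function on the left-hand side equals $F := \sum_{P \in \mathcal P} \mu(P)^{-1} a_P 1_P$. By duality,
$$
\norm{F}_{L^p(\mu)} = \sup\Bigl\{ \sum_{P \in \mathcal P} a_P\, \angles{g}_P^\mu : g \geq 0,\ \norm{g}_{L^{p'}(\mu)} \leq 1 \Bigr\}.
$$

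The key step will be to verify that $\{a_P\}_{P \in \mathcal P}$ is a Carleson sequence with constant $B_0$, i.e., that for every dyadic cube $Q$,
$$
\sum_{P \in \mathcal P,\; P \subseteq Q} a_P \leq B_0\, \mu(Q).
$$
To this end, let $\mathcal M_Q$ be the collection of maximal elements of $\{P \in \mathcal P : P \subseteq Q\}$; these are pairwise disjoint dyadic subcubes of $Q$, and each is itself a parent, so $\angles{h}_{M'}^\mu \leq B_0$ for every $M' \in \mathcal M_Q$. Using the pairwise disjointness of the $R$'s,
$$
\sum_{P \subseteq Q,\, P \in \mathcal P} a_P = \sum_{M' \in \mathcal M_Q} \sum_{R :\, \hat R \subseteq M'} \int_R h \dmu \leq \sum_{M' \in \mathcal M_Q} \int_{M'} h \dmu \leq B_0 \sum_{M' \in \mathcal M_Q} \mu(M') \leq B_0\, \mu(Q).
$$

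The remaining computation is routine. Apply H\"older on the sum over $P$ with exponents $p$ and $p'$ to get
$$
\sum_P a_P\, \angles{g}_P^\mu \leq \Bigl(\sum_P a_P\Bigr)^{1/p} \Bigl(\sum_P a_P\, (\angles{g}_P^\mu)^{p'}\Bigr)^{1/p'}.
$$
By disjointness of the $R$'s, the first factor equals $\bigl(\int_{\bigcup_R R} h \dmu\bigr)^{1/p}$, while the Carleson embedding theorem applied to the Carleson sequence $\{a_P\}$ bounds the second factor by $(C_{p'} B_0)^{1/p'}\, \norm{g}_{L^{p'}(\mu)}$. Taking the supremum over admissible $g$ yields the claim for $p > 1$; the case $p = 1$ is immediate since then $\norm{F}_{L^1(\mu)} = \sum_P a_P = \sum_R \int_R h \dmu = \int_{\bigcup_R R} h \dmu$.

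The main obstacle is justifying the Carleson condition \emph{uniformly over all} dyadic cubes $Q$, not merely over those in $\mathcal P$: an arbitrary $Q$ may be much finer than any parent it contains, so the naive bound $\angles{h}_Q^\mu \leq B_0$ is not available. Passing to the maximal sub-parents inside $Q$ reduces the estimate to averages on cubes where the hypothesis $\angles{h}_{\hat R}^\mu \leq B_0$ applies directly, and the rest of the argument is pure bookkeeping.
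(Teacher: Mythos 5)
Your proof is correct. Note, however, that the paper does not supply its own proof of this lemma; it simply cites Lemma~3.3 of L\'opez-S\'anchez, Martell, and Parcet, so there is no internal argument to compare against. Your route — group the terms by distinct parent cubes $P = \hat R$, dualize against $g \in L^{p'}(\mu)$, split off $\bigl(\sum_P a_P\bigr)^{1/p} = \bigl(\int_{\bigcup R} h\,\mathrm{d}\mu\bigr)^{1/p}$ by H\"older, and control the remaining factor via the Carleson embedding theorem once you verify $\sum_{P \subseteq Q} a_P \leq B_0\,\mu(Q)$ — is the natural argument, and your verification of the Carleson condition via the maximal sub-parents $\mathcal M_Q$ is exactly where the care is needed and is handled correctly (each $R$ with $\hat R \subseteq M'$ lies in $M'$, the $R$'s are pairwise disjoint, so $\sum_{R:\hat R \subseteq M'}\int_R h\,\mathrm{d}\mu \leq \int_{M'} h\,\mathrm{d}\mu \leq B_0\,\mu(M')$). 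The $p = 1$ endpoint is, as you say, an identity.
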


\section{Corollaries}\label{sec_corollaries}
In this section, we state some corollaries of the characterization of the boundedness of the operator $T_\lambda(\cdotroomy\mu):L^p_C(\mu)\to L^p_D(\mu)$ by the dual pairing testing condition \eqref{eq_dualpairingtestingcondition}, or, equivalently, by the endpoint testing condition \eqref{eq_limitingcaseoflinfinitydirect}. 

First, Theorem \ref{thm_dualpairingtesting} provides an alternative proof for the following well-known John--Nirenberg-type inequality:
\begin{corollary}[John--Nirenberg-type inequality]Let $\mu$ be a locally finite Borel measure. Let $\{\lambda_Q\}_{Q\in\cd}$ be non-negative real numbers. Then, for each $1<p<\infty$, we have
$$
\sup_{R\in\cd} \frac{1}{\mu(R)}\norm{\sum_{\substack{Q\in\cd:\\ Q\subseteq R}} \lambda_Q 1_Q}_{L^1(\mu)}\eqsim_p \sup_{R\in\cd} \frac{1}{\mu(R)^{1/p}}\norm{\sum_{\substack{Q\in\cd:\\ Q\subseteq R}} \lambda_Q 1_Q}_{L^p(\mu)}.
$$
\end{corollary}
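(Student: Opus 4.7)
The plan is to reduce the corollary to a single application of Theorem \ref{thm_dualpairingtesting} (equivalently, Theorem \ref{thm_alternativetesting}(i)) with $C=D=E=\br$, which trivially has the Hardy--Littlewood property, and $\sigma=\omega=\mu$, which trivially satisfies the $A_\infty$ condition with respect to itself. To encode the sums $\sum_{Q\subseteq R}\lambda_Q 1_Q$ as the action of a dyadic operator on $1_R$, I would rescale the coefficients and consider the positive dyadic operator
$$
\widetilde{T}(f\mu):=\sum_{Q\in\cd}\frac{\lambda_Q}{\mu(Q)}\int_Q f\dmu\,1_Q=\sum_{Q\in\cd}\lambda_Q\angles{f}^\mu_Q 1_Q.
$$
The key elementary identity is that, for every $R\in\cd$,
$$
\widetilde{T}_R(1_R\mu)=\sum_{\substack{Q\in\cd:\\Q\subseteq R}}\lambda_Q 1_Q,
$$
since $\int_Q 1_R\dmu=\mu(Q)$ whenever $Q\subseteq R$.

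With this identification in hand, the easy direction (LHS $\leq$ RHS) is just H\"older's inequality: for any $1<p<\infty$,
$$
\frac{1}{\mu(R)}\norm{\widetilde{T}_R(1_R\mu)}_{L^1(\mu)}\leq \frac{\mu(R)^{1/p'}}{\mu(R)}\norm{\widetilde{T}_R(1_R\mu)}_{L^p(\mu)}=\frac{1}{\mu(R)^{1/p}}\norm{\widetilde{T}_R(1_R\mu)}_{L^p(\mu)}.
$$
For the reverse direction, I would use positivity of $\widetilde T$ together with the real-valuedness to note that, among $f\in L^\infty(R,\mu)$ with $\norm{f}_{L^\infty(R,\mu)}\leq 1$, the choice $f=1_R$ is extremal. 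Consequently, the endpoint $L^\infty$ testing constant of Theorem \ref{thm_alternativetesting} for $\widetilde T$ is exactly
$$
\mathfrak{B}=\sup_{R\in\cd}\frac{1}{\mu(R)}\norm{\widetilde{T}_R(1_R\mu)}_{L^1(\mu)}=\sup_{R\in\cd}\frac{1}{\mu(R)}\norm{\sum_{Q\subseteq R}\lambda_Q 1_Q}_{L^1(\mu)},
$$
which is the LHS of the corollary.

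To finish, I would apply Theorem \ref{thm_alternativetesting}(i) (the scalar case makes the Hardy--Littlewood hypothesis trivial) to deduce $\norm{\widetilde{T}(\,\cdot\,\mu)}_{L^p(\mu)\to L^p(\mu)}\lesssim_p \mathfrak{B}$, and then test this boundedness against $f=1_R$ to obtain
$$
\frac{1}{\mu(R)^{1/p}}\norm{\widetilde{T}_R(1_R\mu)}_{L^p(\mu)}\leq \norm{\widetilde{T}(\,\cdot\,\mu)}_{L^p(\mu)\to L^p(\mu)}\lesssim_p\mathfrak{B},
$$
which is precisely RHS $\lesssim_p$ LHS. There is no genuine obstacle here; the only point worth explicit verification is that positivity and scalar-valuedness together reduce the supremum in the testing condition to the single test function $f=1_R$, so that the testing constant literally coincides with the $L^1$ side of the claimed equivalence.
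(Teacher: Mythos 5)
Your proposal is correct and follows essentially the same route as the paper: you define the same rescaled operator $T(f\mu)=\sum_Q \lambda_Q\angles{f}^\mu_Q 1_Q$, identify the left-hand side as the endpoint direct $L^\infty$ testing constant $\mathfrak{B}$ and the right-hand side as the direct $L^\infty$ testing constant $\mathfrak{T}$ (using positivity and scalar-valuedness to reduce the supremum over test functions to $f=1_R$), and then close the loop via $\mathfrak{B}\leq\mathfrak{T}\leq\norm{T(\cdot\,\mu)}_{L^p(\mu)\to L^p(\mu)}\lesssim_p\mathfrak{B}$, the last step being the main theorem in the trivial scalar/unweighted setting. The only cosmetic difference is that you invoke Theorem \ref{thm_alternativetesting}(i) where the paper invokes Theorem \ref{thm_dualpairingtesting}, but these coincide here since the Hardy--Littlewood hypothesis for $E=\br$ and the $A_\infty$ hypothesis for $\sigma=\omega=\mu$ are both trivial.
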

\begin{proof}The equivalence follows from observing that the left-hand side of the inequality is the end-point direct $L^\infty$ testing constant \eqref{eq_limitingcaseoflinfinitydirect} and the right-hand side is the direct $L^\infty$ testing constant \eqref{eq_testing_direct}   for the operator $T(\cdotroomy\mu):L^p(\mu)\to L^p(\mu)$ defined by $T(f\mu):=\sum_{Q\in\cd} \lambda_Q \angles{f}^\mu_Q 1_Q$.
\end{proof}

The next embedding theorem was proven by Nazarov, Treil, and Volberg \cite{nazarov2003} by using the Bellman function method; an alternative proof for this theorem is provided by Theorem \ref{thm_dualpairingtesting}.
\begin{corollary}[Embedding theorem, Theorem 3.1 in \cite{nazarov2003}]Let $\mu$ be a locally finite Borel measure. Let $\{\beta_Q\}$ be non-negative real numbers. Let $T(\,\cdot\, \mu)$ be defined by $T(f \mu):=\{\angles{f}^\mu_Q 1_Q\}_{Q\in\cd}$, so that $$
\abs{T(f\mu)}_{\ell^s(\cd,\beta)}:=\Big(\sum_{Q\in\cd} \beta_Q (\angles{f}^\mu_Q 1_Q)^s \Big)^{1/s}.
$$
Then, the following assertions are equivalent:
\begin{itemize}
\item[i)] $T(\,\cdot\,\mu):L^p(\mu) \to L^p_{\ell^s(\cd,\beta)}(\mu)$ is bounded for all $1<p,s<\infty$.
\item[ii)] $T(\,\cdot\,\mu):L^{p_0}(\mu) \to L^{p_0}_{\ell^{s_0}(\cd,\beta)}(\mu)$ is bounded for some $1<p_0,s_0<\infty$.
\item[iii)] The direct testing constant
$$
\mathfrak{T}^{p_0}_{s_0}:=\sup_{R\in\cd} \frac{\norm{T_R(1\mu)}_{L^{p_0}_{\ell^{s_0}(\cd,\beta)}}}{\mu(R)^{1/{p_0}}}
$$
is finite for some $1<p_0,s_0<\infty$.
\item[iv)] The Carleson constant
$$
\mathfrak{C}:=\sup_{R\in\cd} \frac{1}{\mu(R)}\sum_{\substack{Q\in\cd: \\Q\subseteq R}} \beta_Q \mu(Q)
$$
is finite.
\end{itemize}
Quantitatively, we have:
$$
\norm{T(\,\cdot\,\mu)}_{L^p(\mu) \to L^p_{\ell^s(\cd,\beta)}(\mu)}^s\lesssim_{p,s} \mathfrak{C}\lesssim_{s_0} (\mathfrak{T}^{p_0}_{s_0})^{s_0}\leq \norm{T(\,\cdot\,\mu)}_{L^{s_0}(\mu) \to L^{s_0}_{\ell^{s_0}(\cd,\beta)}(\mu)}^{s_0}.
$$
\end{corollary}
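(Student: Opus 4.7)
The plan is to verify the quantitative chain displayed in the corollary, which yields the equivalence of the four conditions. The central tool is Theorem \ref{thm_dualpairingtesting} applied in the unweighted case $\sigma = \omega = \mu$, for which $[\mu]_{A_\infty(\mu)} \lesssim 1$ automatically; note further that both $\ell^s(\cd,\beta)$ and its dual are lattice-isomorphic to ordinary $\ell^s$ and $\ell^{s'}$ via the positive rescaling $x_Q \mapsto \beta_Q^{1/s} x_Q$, and therefore inherit the Hardy--Littlewood property from the Fefferman--Stein inequality.

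For (iv)$\Rightarrow$(i), I would compute the endpoint direct $L^\infty$ testing constant $\mathfrak{B}$ explicitly. By positivity of the operator and the lattice structure of $\ell^s(\cd,\beta)$, we have $\abs{T_R(f\mu)}_{\ell^s(\cd,\beta)} \leq \norm{f}_{L^\infty(R,\mu)}\abs{T_R(1\mu)}_{\ell^s(\cd,\beta)}$ pointwise; since $\abs{T_R(1\mu)}_{\ell^s(\cd,\beta)} = (\sum_{Q\subseteq R}\beta_Q 1_Q)^{1/s}$, the supremum defining $\mathfrak{B}$ is attained at $f=1$, giving
\[
\mathfrak{B} = \sup_{R\in\cd}\frac{1}{\mu(R)}\int_R \Big(\sum_{Q\subseteq R}\beta_Q 1_Q\Big)^{1/s} d\mu.
\]
A single H\"older inequality with exponents $s$ and $s'$, together with the fact that $\phi := \sum_{Q\subseteq R}\beta_Q 1_Q$ is supported in $R$, gives $\int_R\phi^{1/s}d\mu \leq (\int_R \phi\,d\mu)^{1/s}\mu(R)^{1/s'}$, hence $\mathfrak{B}\leq \mathfrak{C}^{1/s}$. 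By the equivalence (noted in the introduction) between the endpoint direct $L^\infty$ testing condition and the $L^\infty$ dual pairing testing condition, Theorem \ref{thm_dualpairingtesting} delivers $\norm{T}_{L^p\to L^p_{\ell^s(\cd,\beta)}}\lesssim_{p,s}\mathfrak{B}\leq \mathfrak{C}^{1/s}$.

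The implications (i)$\Rightarrow$(ii) and (ii)$\Rightarrow$(iii) are immediate; the latter follows from $T(1_R\mu) \geq T_R(1\mu)$ pointwise together with $\norm{1_R}_{L^{p_0}(\mu)} = \mu(R)^{1/p_0}$. The same test function $f = 1_R$ at the exponent $p=s=s_0$ also gives $\mathfrak{C} \leq \norm{T}_{L^{s_0}\to L^{s_0}_{\ell^{s_0}(\cd,\beta)}}^{s_0}$, since $\abs{T(1_R\mu)}_{\ell^{s_0}(\cd,\beta)}^{s_0}\geq \sum_{Q\subseteq R}\beta_Q 1_Q$ pointwise and $\norm{1_R}^{s_0}_{L^{s_0}(\mu)}=\mu(R)$.

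The remaining step (iii)$\Rightarrow$(iv) is the most delicate because $p_0$ and $s_0$ are a priori unrelated; when $p_0<s_0$ a direct Jensen/H\"older argument goes the wrong way. The plan is to route through endpoint testing and Theorem \ref{thm_dualpairingtesting}. A single H\"older inequality converts the $L^{p_0}$ testing with $f=1$ into endpoint $L^\infty$ testing,
\[
\norm{T_R(1\mu)}_{L^1_{\ell^{s_0}(\cd,\beta)}}\leq \norm{T_R(1\mu)}_{L^{p_0}_{\ell^{s_0}(\cd,\beta)}}\mu(R)^{1/p_0'}\leq \mathfrak{T}^{p_0}_{s_0}\mu(R),
\]
and positivity extends this to arbitrary $f$ with $\norm{f}_{L^\infty}\leq 1$. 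Theorem \ref{thm_dualpairingtesting} applied at $p=s_0$ then yields $\norm{T}_{L^{s_0}\to L^{s_0}_{\ell^{s_0}(\cd,\beta)}}\lesssim_{s_0} \mathfrak{T}^{p_0}_{s_0}$, and combining with the previous paragraph closes the chain $\mathfrak{C} \lesssim_{s_0} (\mathfrak{T}^{p_0}_{s_0})^{s_0}$. The main conceptual obstacle is precisely this bootstrap in the case $p_0<s_0$; apart from it, the argument is an elementary sequence of H\"older inequalities and uses of positivity.
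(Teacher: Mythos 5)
Your proof is correct and takes essentially the same route as the paper: both proofs funnel everything through Theorem \ref{thm_dualpairingtesting} applied in the unweighted case, bounding the dual-pairing/endpoint testing constant by a single H\"older inequality. Your bound $\mathfrak{B}\leq\mathfrak{C}^{1/s}$ is literally the paper's $\mathfrak{P}_s\leq\mathfrak{T}^s_s=\mathfrak{C}^{1/s}$ (the same H\"older applied to $\phi=\sum_{Q\subseteq R}\beta_Q 1_Q$), and your $\norm{T_R(1\mu)}_{L^1_{\ell^{s_0}}}\leq\mathfrak{T}^{p_0}_{s_0}\mu(R)$ is the paper's $\mathfrak{P}_{s_0}\leq\mathfrak{T}^{p_0}_{s_0}$; the cycle is closed identically by evaluating the resulting operator-norm bound at $p=s_0$. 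The only additions in your write-up are points the paper leaves implicit --- that $\ell^s(\cd,\beta)$ and its dual are lattice-isomorphic to $\ell^s,\ell^{s'}$ and hence inherit the Hardy--Littlewood property, that $[\mu]_{A_\infty(\mu)}\lesssim 1$, and the trivial direction (ii)$\Rightarrow$(iii) --- which are accurate and useful clarifications rather than a different argument.
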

\begin{proof}We observe that $\mathfrak{T}^{s}_{s}=\mathfrak{C}^{1/{s}}$ for every $s\in(1,\infty)$. First,  we prove that iii) implies iv) via the dual pairing testing. By H\"older's inequality, the direct testing condition implies the dual pairing testing condition:
\begin{equation*}
\begin{split}
\mathfrak{P}_{s_0}&:=\sup_{R\in\cd}\sup_{\substack{f\in L^\infty(R,\mu),\\ g\in L^\infty_{\ell^{s_0'}(\cd,\beta)}(R,\mu), }}\frac{\abs{\int g T_R (f\mu) \dmu}}{\norm{g}_{L^\infty_{\ell^{s_0'}(\cd,\beta)}(R,\mu)} \norm{f}_{L^\infty(R,\mu)} \mu(R)}\\
& \leq  \sup_{R\in\cd}\frac{\norm{T_R(1_R\, \mu)}_{L^{p_0}_{\ell^{s_0}(\cd,\beta)}}}{\mu(R)^{1/{p_0}}}=:\mathfrak{T}^{p_0}_{s_0}.
\end{split}
\end{equation*}
Hence, by Theorem \ref{thm_dualpairingtesting}, we have $
\norm{T(\,\cdot\,\mu)}_{L^p(\mu) \to L^p_{\ell^{s_0}(\cd,\beta)}(\mu)}\lesssim_{p,{s_0}} \mathfrak{T}^{p_0}_{s_0}
$ for every $p\in(1,\infty)$, which in particular (for $p=s_0$) implies that
$$
\mathfrak{C}^{1/{s_0}}=\mathfrak{T}^{s_0}_{s_0}\leq \norm{T(\,\cdot\,\mu)}_{L^{s_0}(\mu) \to L^{s_0}_{\ell^{s_0}(\cd,\beta)}(\mu)}\lesssim_{s_0} \mathfrak{T}^{p_0}_{s_0}.
$$

Next, we prove that iv) implies i) via the dual pairing testing condition. Again, by H\"older's inequality, for every $s\in(1,\infty)$, we have
$
\mathfrak{P}_s\leq \mathfrak{T}^s_s=\mathfrak{C}^{1/s}.
$
Hence, by Theorem \ref{thm_dualpairingtesting}, $
\norm{T(\,\cdot\,\mu)}_{L^p(\mu) \to L^p_{\ell^s(\cd,\beta)}(\mu)}\lesssim_{p,s} \mathfrak{C}^{1/s}
$ for every $p,s\in(1,\infty)$.

\end{proof}

Finally, Theorem \ref{thm_dualpairingtesting} provides an extension of the dyadic Carleson embedding theorem for the class of matrices whose all entries are non-negative:
\begin{corollary}[$L^\infty$ version of the Carleson embedding theorem for matrices with non-negative entries]\label{thm_linfinitycarlesonembedding}Let $\mu$ be a locally finite Borel measure. Let $\{\lambda_Q\}_{Q\in\cd}$ be such that each $\lambda_Q:\ell^2 \to \ell^2$ is a symmetric (infinite dimensional)  matrix whose all entries are non-negative. Then
\begin{equation}
\label{eq_linfinitycarlesonvariant}
\sup_{f\in L^2_{\ell^2}(\mu)} \frac{\sum_{Q\in\cd}(\angles{f}_Q^\mu)^t\lambda_Q \angles{f}^\mu_Q}{\norm{f}_{L^2_{\ell^2}(\mu)}^2}\eqsim \sup_{R\in\cd} \sup_{f\in L^\infty_{\ell^2}(R,\mu)} \frac{\sum_{Q\in\cd: Q\subseteq R}(\angles{f}_Q^\mu)^t\lambda_Q \angles{f}_Q^\mu }{\norm{f}_{L^\infty_{\ell^2}(R,\mu)}^2 \mu(R)}.
\end{equation}
\end{corollary}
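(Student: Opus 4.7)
The plan is to reduce the statement to Theorem~\ref{thm_dualpairingtesting} by a rescaling. Set $\tilde\lambda_Q := \lambda_Q/\mu(Q)^2$ (taking the term to be zero when $\mu(Q)=0$); this is again symmetric and positive as a map $\ell^2\to\ell^2$, since its entries $\lambda_{Q,ij}/\mu(Q)^2$ remain non-negative. Define
$$
T_{\tilde\lambda}(f\mu) := \sum_{Q\in\cd}\tilde\lambda_Q\int_Q f\,d\mu\,1_Q.
$$
The symmetry of each $\lambda_Q$ makes $T_{\tilde\lambda}(\cdotroomy\mu)$ self-adjoint on $L^2_{\ell^2}(\mu)$, and a direct computation gives
$$
\langle f, T_{\tilde\lambda}(f\mu)\rangle_{L^2_{\ell^2}(\mu)} = \sum_{Q\in\cd}(\angles{f}^\mu_Q)^t \lambda_Q \angles{f}^\mu_Q.
$$
Since the replacement $f\mapsto \abs{f}$ (componentwise) preserves $\norm{f}_{L^2_{\ell^2}(\mu)}$ and satisfies $\abs{\langle f, T_{\tilde\lambda}(f\mu)\rangle}\leq \langle \abs{f}, T_{\tilde\lambda}(\abs{f}\mu)\rangle$ (triangle inequality plus non-negativity of the entries of $\lambda_Q$), the standard numerical-radius identity $\norm{T}=\sup_{\norm{f}=1}\abs{\langle f, Tf\rangle}$ for bounded self-adjoint operators identifies the LHS of \eqref{eq_linfinitycarlesonvariant} with $\norm{T_{\tilde\lambda}(\cdotroomy\mu)}_{L^2_{\ell^2}(\mu)\to L^2_{\ell^2}(\mu)}$.

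Since $\ell^2$ and $(\ell^2)^*\cong\ell^2$ are $L^2$-spaces, both enjoy the dyadic Hardy--Littlewood property (via Fefferman--Stein); and $[\mu]_{A_\infty(\mu)}=1$ trivially. Therefore Theorem~\ref{thm_dualpairingtesting} with $p=q=2$, $C=D=\ell^2$, $\sigma=\omega=\mu$ yields
$$
\norm{T_{\tilde\lambda}(\cdotroomy\mu)}_{L^2_{\ell^2}(\mu)\to L^2_{\ell^2}(\mu)}\eqsim \mathfrak{B},
$$
where $\mathfrak{B}$ is the $L^\infty$ dual pairing testing constant~\eqref{eq_dualpairingtestingcondition} for $T_{\tilde\lambda}$.

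It remains to show that $\mathfrak{B}$ is comparable to the RHS of~\eqref{eq_linfinitycarlesonvariant}. The direction RHS$\leq \mathfrak{B}$ follows from restricting the supremum defining $\mathfrak{B}$ to pairs $g=f\geq 0$. The reverse direction is where I expect the main obstacle: $\lambda_Q$ is not assumed positive semidefinite, so the bilinear form $B(f,g):=\sum_{Q\subseteq R}(\angles{f}^\mu_Q)^t\lambda_Q\angles{g}^\mu_Q$ need not satisfy the Cauchy--Schwarz inequality. I would sidestep this by polarization: for non-negative $f, g\in L^\infty_{\ell^2}(R,\mu)$, the symmetry of $\lambda_Q$ gives
$$
2B(f,g) = B(f+g, f+g) - B(f,f) - B(g,g)\leq B(f+g,f+g),
$$
because $B(f,f),B(g,g)\geq 0$ on non-negative inputs (each summand is a product of non-negative numbers). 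Together with $\norm{f+g}_{L^\infty_{\ell^2}(R,\mu)}\leq 2\max(\norm{f}_{L^\infty_{\ell^2}(R,\mu)},\norm{g}_{L^\infty_{\ell^2}(R,\mu)})$, applying the RHS of \eqref{eq_linfinitycarlesonvariant} to $f+g$ yields $B(f,g)\lesssim C\,\norm{f}_{L^\infty_{\ell^2}(R,\mu)}\norm{g}_{L^\infty_{\ell^2}(R,\mu)}\mu(R)$ with $C$ denoting the RHS. The extension to arbitrary $f,g$ is by the pointwise estimate $\abs{B(f,g)}\leq B(\abs{f},\abs{g})$ with $\abs{\cdotroomy}$ the componentwise absolute value, which preserves the $L^\infty_{\ell^2}$-norm.
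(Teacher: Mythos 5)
Your proof is correct and follows essentially the same route as the paper's: reduce to Theorem~\ref{thm_dualpairingtesting} with $p=q=2$, $C=D=\ell^2$, $\sigma=\omega=\mu$ (so $[\mu]_{A_\infty(\mu)}=1$), and relate the dual-pairing testing constant $\mathfrak{B}$ to the right-hand side of \eqref{eq_linfinitycarlesonvariant} by polarization of the symmetric form. You make explicit two points the paper leaves implicit: the rescaling $\tilde\lambda_Q=\lambda_Q/\mu(Q)^2$ that writes the quadratic form as $\langle f,T_{\tilde\lambda}(f\mu)\rangle_{L^2_{\ell^2}(\mu)}$, and that the polarization step $2B(f,g)\le B(f+g,f+g)$ relies on $B(f,f),B(g,g)\ge 0$ for non-negative inputs, which is supplied by the non-negativity of the entries of $\lambda_Q$. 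This last point is genuinely worth spelling out, since the paper's unqualified ``depolarisation trick'' (that $B(v,v)\lesssim\norm{v}_V^2$ alone implies $B(v,v')\lesssim\norm{v}_V\norm{v'}_V$) fails for a general symmetric bilinear form and requires such a lower bound, or equivalently the reduction to non-negative inputs via the componentwise absolute value, which you carry out correctly.
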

\begin{proof}A well-known trick of {\it depolarisation} can be phrased as follows: Let $(V,\norm{\cdotroomy}_V)$ be a normed vector space, and let $B(\cdotroomy,\cdotroomy):V\times V\to \br$ be a symmetric bilinear form. Assume that $B(v,v)\lesssim \norm{v}_V^2$ for all $v\in V$. Then $B(v,v')\lesssim \norm{v}_V \norm{v'}_V$ for all $v,v'\in V$. From this trick, it follows that
\begin{equation}\label{eq_intermediatestep}
\sup_{R\in\cd} \;\sup_{f\in L^\infty_{\ell^2}(R,\mu), g\in L^\infty_{\ell^2}(R,\mu)} \frac{\sum_{Q\in\cd: Q\subseteq R}(\angles{f}_Q^\mu)^t\lambda_Q^\mu \angles{g}_Q }{\norm{f}_{L^\infty_{\ell^2}(R,\mu)}\norm{g}_{L^\infty_{\ell^2}(R,\mu)}  \mu(R)}\lesssim  \text{R.H.S}\eqref{eq_linfinitycarlesonvariant} .
\end{equation}
The left-hand side of the equation \eqref{eq_intermediatestep} is the dual pairing testing constant for the dual norm inequality $\sum_{Q\in\cd}\lambda_Q (\angles{f}^\mu_Q)^t \lambda_Q \angles{g}^\mu_Q\lesssim \norm{f}_{L^2_{\ell^2}(\mu)}\norm{g}_{L^2_{\ell^2}(\mu)}$. 
\end{proof}

\section{Questions about the borderline of the vector-valued testing conditions}
The questions are posed in the unweighted case since the answers are unknown even in this case. The first question is about weakening the type of the testing condition in the characterization. The operator $T(\cdotroomy \mu):L^p_E(\mu) \to L^p_E(\mu)$ satisfies {\it the constant function testing condition} if
\begin{equation}
\label{eq_testing_constant}
\norm{T_R(e 1_R \mu)}_{L^p_E(\mu)}\leq \mathfrak{S} \abs{e}_E \mu(R)^{1/p}
\end{equation}
for every $R\in\cd$, and every $e\in E$.
This testing condition is weaker than the direct $L^\infty$ testing conditions \eqref{eq_linftytestingconditions} in that $\mathfrak{S}\leq \mathfrak{T}$.
Note that,  in the real-valued case, this testing condition and the $L^\infty$ testing condition  both coincide with the Sawyer testing condition \eqref{eq_sawyer testing}.

\begin{question}[Borderline case: Can we use the testing condition \eqref{eq_testing_constant} in Theorem \ref{thm_twoweight} in place of the $L^\infty$ testing condition \eqref{eq_linftytestingconditions}?] In particular, contrasting with Theorem \ref{thm_twoweight}, is it true that there exists a constant $C$ such that
\begin{equation*}\label{eq_directconstanttesting}
\sup_{f\in L^2_{\ell^2}} \frac{\norm{\sum_{Q\in\cd} {\lambda_Q} \angles{f}_Q1_Q }_{L^2_{\ell^2}}}{\norm{f}_{L^2_{\ell^2}}} \leq C \sup_{R\in\cd} \sup_{a\in\ell^2} \frac{\norm{\big(\sum_{Q\in\cd: Q\subseteq R} \lambda_Q1_Q\big) a}_{L^2_{\ell^2}}}{\abs{a}_{\ell^2} \abs{R}^{1/2}}
\end{equation*}
for all $\{\lambda_Q\}_{Q\in\cd}$ such that each $\lambda_Q:\ell^2\to\ell^2$ is a symmetric matrix whose all entries  are non-negative? Or, contrasting with Theorem \ref{thm_dualpairingtesting}, is it true that there exists a constant $C$ such that
\begin{equation}\label{eq_carlesonconstanttesting}
\sup_{f\in L^2_{\ell^2}} \frac{\sum_{Q\in\cd}\angles{f}_Q^t\lambda_Q \angles{f}_Q}{\norm{f}_{L^2_{\ell^2}}^2}\leq C \sup_{R\in\cd} \frac{\norm{\sum_{Q\in\cd: Q\subseteq R} \lambda_Q }_{\ell^2\to \ell^2}}{\abs{R}}
\end{equation}
for all $\{\lambda_Q\}_{Q\in\cd}$ such that each $\lambda_Q:\ell^2\to\ell^2$ is a symmetric matrix whose all entries  are non-negative?
\end{question}
\begin{remark}We note that Nazarov, Treil, and Volberg \cite{nazarov1997} proved that the estimate \eqref{eq_carlesonconstanttesting} fails for a different class of matrices: the class of positive-semi-definite matrices. Recall that a symmetric matrix $M$ is {\it positive-semi-definite} if $x^tMx\geq 0$ for all column vectors $x$. \end{remark}

In our characterizations, the assumption that the Banach space has the Hardy--Littlewood property can be replaced by assuming that the measure is doubling, or by strenghtening the testing condition (see Theorem \ref{thm_alternativetesting}). The second question is about omitting every additional assumption.
\begin{question}[Borderline case: Can we omit every additional assumptions in Theorem \ref{thm_alternativetesting}?]\label{question_additionalassumption}Let $p\in(1,\infty)$.  Let $(E,\abs{\cdotroomy}_E,\leq)$ be a Banach lattice. Let $\mu$ be a locally finite Borel measure. Then, is is true that the operator $T_\lambda(\cdotroomy):L^p_E(\mu)\to L^p_E(\mu)$ is bounded if and only if it satisfies the direct $L^\infty$ testing condition \eqref{eq_testing_direct}?
\end{question}

\appendix 
\section{On the dyadic lattice Hardy--Littlewood maximal operator}
\subsection{Dyadic and the centered lattice maximal function are comparable}\label{sec_comparisionofdyadicandcentered}
The dyadic Hardy--Littlewood maximal function $\bar{M}_\cd f$ is defined by
$$
\bar{M}_\cd f(x):=\sup_{Q\in\cd: Q\ni x } \angles{f}_Q,
$$
where $\cd$ is a collection of dyadic cubes, and the centered lattice Hardy--Littlewood maximal function $\bar{M}_J$ is defined by
$$
\bar{M}_J f(x):=\sup_{r\in J}\; \angles{f}_{B(x,r)},
$$ 
where $J$ is a finite set of radii. For the Lebesgue measure,  these maximal functions are pointwise comparable in the lattice order: For each finite collection $\cd$ of dyadic cubes, there exists a finite set $J$ of radii  such that $\bar{M}_\cd f(x)\leq \bar{M}_{J_\cd}f(x)$ for every $x\in \br^d$. Conversely, for each finite set $J$ of radii, there exist collections $\cd^\alpha_J$ of (shifted) dyadic cubes such that $ \bar{M}_{J}f(x)\leq \sum_{\alpha} \bar{M}_{\cd^\alpha_J} f(x)
$ for every $x\in\br^d$.

This comparision follows from the following well-known observation: For each dyadic cube $Q\in\cd$, there exists a ball $B$ such that $Q\subseteq B$ and $\abs{Q}\eqsim \abs{B}$. Conversely, for each ball $B$, there exists a dyadic cube $Q$ in some shifted dyadic system $\cd^\alpha$ such that $B\subseteq Q$ and $\abs{B}\eqsim \abs{Q}$. For a proof, see, for example, \cite[Lemma 2.5]{hytonen2013d}. Recall that, for each $\alpha\in\{0,\frac{1}{3}\}^d$, {\it the shifted dyadic system}  $\cd^\alpha$ on $\br^d$ is defined by $$\cd^\alpha:=\{2^{-k}([0,1)^d+(-1)^k \alpha + j) : k\in\bz, j\in\bz^d\}.$$

\subsection{Universal norm bound}\label{sec_universalnormbound}
The universal bound for the lattice maximal operator,
$$\norm{\bar{M}^\mu}_{L^p_E(\br^d,\mu) \to L^p_E(\br^d,\mu)}\lesssim_p  \norm{\bar{M} }_{L^p_E(\br^d) \to L^p_E(\br^d)},$$ 
 follows from either of the following techniques: 
 \begin{itemize}
 \item The boundedness of the dyadic real-valued maximal function is characterized by means of the existence of a Bellman function, by Nazarov and Treil \cite[Section 1]{nazarov1996}. This characterization works also for the dyadic lattice maximal function. 
 \item In the spirit of Burkolder's \cite{burkholder1991} characterization of the boundedness of the martingale transform, the boundedness of the martingale  Rademacher maximal function is characterized by means of the existence of an auxiliary function with certain boundedness and concavity properties,  by Kemppainen \cite[Section 7]{kemppainen2011}. This characterization works also for the dyadic lattice maximal function, once the Rademacher bound is replaced by the lattice supremum. This together with an unpublished manuscript containing the proof was communicated to the author by Kemppainen.  
 \end{itemize}
 For reader's convenience,  we represent a proof for the universal bound. The universal bound follows from Proposition \ref{prop_boundednessimpliesbellman} and Proposition \ref{prop_existenceimpliesboundedness} together with the observation that 
$$\norm{\bar{M} }_{L^p_E(\br) \to L^p_E(\br)}\leq \norm{\bar{M} }_{L^p_E(\br^d) \to L^p_E(\br^d)}.$$ 
These propositions follow from Nazarov and Treil's \cite[Section 1]{nazarov1996} Bellman function technique.

\begin{proposition}[Boundedness implies the existence of a Bellman function, \cite{nazarov1996}]\label{prop_boundednessimpliesbellman}Let $(E,\abs{\cdotroomy},\leq)$ be a Banach lattice. Assume that there exists a constant $\mathfrak{B}$ such that 
$$
\norm{\bar{M}_\cd }_{L^p_E(\br) \to L^p_E(\br)}\leq \mathfrak{B}
$$
for all finite collections $\cd$ of dyadic intervals. Then, there exists a Bellman function $B(f,F,L):E_+\times \br_+ \times E_+\to \br_+$ that  has the following properties:
\begin{itemize}
\item[i)] (Boundedness from below) $\abs{L}_E^P\leq B(f,F,L)$ whenever $0<\abs{f}_E^p\leq F$, or $f=0$ and $F=0$.
\item[ii)] (Boundedness from above) $B(f,F,L)\lesssim_p \mathfrak{B}^p(F+\abs{L}_E^p)$.
\item[ii)] (Invariance) $B(f,F,L)=B(f,F,\sup\{L,f\})$.
\item[iv)] (Concavity) For each $L\in E$, the function $(f,F)\mapsto B(f,F,L)$ is midpoint concave.
\end{itemize}

\end{proposition}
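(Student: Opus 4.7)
The plan is to follow the classical Nazarov--Treil construction, adapted to the lattice-valued setting. For a triple $(f,F,L)\in E_+\times \br_+\times E_+$ in the admissible domain (that is, $\abs{f}_E^p\leq F$, or $f=0=F$), I would define
$$
B(f,F,L):=\sup\frac{1}{\abs{I}}\int_I \bigl\lvert\sup\{L,\bar{M}_{\cd'} h\}\bigr\rvert_E^p \, dx,
$$
where the supremum is taken over bounded dyadic intervals $I$, finite collections $\cd'$ of dyadic sub-intervals of $I$, and bounded, $E_+$-valued simple functions $h$ on $I$ satisfying $\angles{h}_I=f$ and $\angles{\abs{h}_E^p}_I\leq F$. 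Translation and dilation invariance of the dyadic grid and the Lebesgue measure make the value of $B$ independent of the choice of $I$, so $B$ is well defined.

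Next, I would verify the four properties in turn. For (i), the choice $h\equiv f$ (admissible since $\abs{f}_E^p\leq F$) together with $\cd'=\emptyset$ produces $\sup\{L,\bar{M}_{\cd'}h\}=L$, so the integrand equals $\abs{L}_E^p$. For (ii), the lattice triangle inequality $\abs{\sup\{L,g\}}_E\leq \abs{L}_E+\abs{g}_E$, Minkowski in $L^p_E$, and the hypothesized maximal bound $\norm{\bar{M}_{\cd'}h}_{L^p_E(\br)}\leq \mathfrak{B}\norm{h}_{L^p_E(\br)}$ combine to give
$$
\Bigl(\tfrac{1}{\abs{I}}\int_I \abs{\sup\{L,\bar{M}_{\cd'}h\}}_E^p\,dx\Bigr)^{1/p} \leq \abs{L}_E+\mathfrak{B}\, F^{1/p},
$$
and hence $B(f,F,L)\lesssim_p \abs{L}_E^p+\mathfrak{B}^p F$. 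For (iii), monotonicity in $L$ yields $B(f,F,L)\leq B(f,F,\sup\{L,f\})$; for the reverse, given any admissible $(\cd',h)$ for $B(f,F,\sup\{L,f\})$, the augmented collection $\cd'\cup\{I\}$ satisfies $\bar{M}_{\cd'\cup\{I\}}h\geq \angles{h}_I=f$ on $I$, so that
$$
\sup\{L,\bar{M}_{\cd'\cup\{I\}}h\}=\sup\{\sup\{L,f\},\bar{M}_{\cd'}h\}\qquad\text{on }I,
$$
supplying a competitor for $B(f,F,L)$ with the same integral. For (iv), given $\varepsilon>0$ and near-optimal configurations $(\cd'_\pm,h_\pm)$ on the dyadic halves $I_\pm$ of $I$ associated with $(f_\pm,F_\pm,L)$, the glued data $h=h_-1_{I_-}+h_+1_{I_+}$ with collection $\cd'_-\cup \cd'_+$ is admissible for $\bigl(\tfrac{f_1+f_2}{2},\tfrac{F_1+F_2}{2},L\bigr)$; since cubes in $\cd'_\pm$ are disjoint from $I_\mp$, the maximal function splits across $I_\pm$, and additivity of the integral over $I_-\cup I_+$ delivers midpoint concavity upon letting $\varepsilon\to 0$.

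The main obstacle is the invariance property (iii), which is the step genuinely exploiting the lattice structure of $E$ and the freedom to adjoin the host interval $I$ to the stopping collection; it is also the property that is least visible in the real-valued analogue. A secondary subtlety is that the constraint $\angles{\abs{h}_E^p}_I\leq F$ is formulated as an inequality rather than an equality, precisely so that gluing two configurations in (iv) does not exit the admissible domain. Finiteness of $B$, needed for it to be a well-defined real-valued function on the admissible domain, is a free consequence of the bound established in (ii).
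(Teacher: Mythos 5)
Your proposal is correct and follows the same Nazarov--Treil construction used in the paper: the Bellman function is defined as a supremum of averaged $L^p_E$-norms of a localized lattice maximal function capped below by $L$, and the four properties are read off from that formula exactly as you describe. The only cosmetic differences from the paper's version are that you impose $\langle\abs{h}_E^p\rangle_I\leq F$ as an inequality (the paper uses an equality and achieves $F=\tfrac12(F_-+F_+)$ after gluing), and you allow arbitrary finite collections $\cd'$ of sub-intervals rather than truncated dyadic systems, which forces you to establish the invariance (iii) by adjoining $I$ to $\cd'$, whereas the paper's truncated systems always contain $I$ so that $\langle\phi\rangle_I=f$ sits inside the supremum automatically.
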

\begin{remark}Since every midpoint concave function that is locally bounded from below is concave (for a proof, see, for example, \cite[Section 7]{kemppainen2011}), the function $(f,F)\mapsto B(f,F,L)$  is in fact concave.
\end{remark}
\begin{proof}[Proof from \cite{nazarov1996}] For each $I\in\cd$, the function $B_I(f,F,L):E_+\times \br_+ \times E_+\to \br_+$ is defined by
\begin{equation}\label{bellman_nazarov}
\begin{split}
B_I(f,F,L):=\sup \Big\{&\frac{1}{\abs{I}}\int_I \abs{\sup\{\sup_{\substack{J:J\subseteq I \\ \ell(J)\geq 2^{-N} \ell(I)}} \angles{\phi}_{J}1_J, L \}}_E^p\dx : \\
&: \text{$\phi_I:\br^d\to E_+$ is locally integrable and satisfies }\\
&\phantom{:} \text{ $\angles{\phi_I}_I=f$ and $\angles{\abs{\phi_I}_E^p}_I=F,$ $N\in\bn$}\Big\}.
\end{split}
\end{equation}By self-similarity of the dyadic intervals, the function $B_I$ does not depend on the interval $I$ and can be denoted by $B$. This Bellman function is introduced by Nazarov and Treil \cite[Section 1]{nazarov1996}. In the real-valued case (that is, $E=\br$), it is explicitly computed by Melas \cite[Theorem 1]{melas2005}.

Next, we check the properties for the Bellman function $B$. The boundedness from below holds because
for each $f\in E_+$ and $F\in\br_+$ such that $0<\abs{f}^p_E\leq F$ there exists $\phi:\br^d\to E_+$ such that $\angles{\phi}_I=f$ and $\angles{\abs{\phi}_E^p}=F$. The boundedness from above follows from the assumed norm estimate. The invariance follows from observing that, under the constraint $\angles{\phi}_I=f$, both the vector $f$ and the vector $L$ belong to the set $\{\angles{\phi}_{J}1_J,L \}_{\substack{J:J\subseteq I,\\ \ell(J)\geq 2^{-N}} \ell(I)}$ of which the lattice supremum is taken. 

Finally, we check the midpoint concavity. Let $I_-$ and $I_+$ be the dyadic children of $I$. Let $\phi_{I_-}$ be such that $\angles{\phi_{I_-}}_{I_-}=f_-$ and $\angles{\abs{\phi_{I_-}}_E^p}_{I_-}=F_-$, and, similarly, $\phi_{I_+}$ be such that $\angles{\phi_{I_+}}_{I_+}=f_+$ and $\angles{\abs{\phi_{I_+}}_E^p}_{I_+}=F_+$. Now, the function $\phi_I:=\phi_{I_-}+\phi_{I_+}$ satisfies $f:=\angles{\phi_I}_I=\frac{1}{2}(\angles{\phi_{I_-}}_{I_-}+ \angles{\phi_{I_+}}_{I_+})=\frac{1}{2}(f_-+f_+)$, and $F:=\angles{\abs{\phi_I}_E^p}_I=\frac{1}{2}(\angles{\abs{\phi_{I_-}}_E^p}_{I_-}+ \angles{\abs{\phi_{I_+}}_E^p}_{I_+})=\frac{1}{2}(F_-+F_+)$. We estimate
\begin{equation*}
\begin{split}
&\frac{1}{2}\frac{1}{\abs{I_-}}\int_{I_-} \abs{\sup\{\sup_{\substack{J:J\subseteq {I_-} \\ \ell(J)\geq 2^{-N} \ell(I_-)}} \angles{\phi_{I_-}}_{J}1_J, L \}}_E^p\dx\\
&+\frac{1}{2}\frac{1}{\abs{I_+}}\int_{I_+} \abs{\sup\{\sup_{\substack{J':J'\subseteq {I_+} \\ \ell(J')\geq 2^{-N'} \ell(I_+)}} \angles{\phi_{I_+}}_{J}1_J, L \}}_E^p\dx\\
&=\frac{1}{\abs{I}}\int_I \abs{\sup\{\sup_{\substack{J,J':J\subseteq {I_-}, J'\subseteq I_+ \\ \ell(J)\geq 2^{-N} \ell(I_-),\ell(J)\geq 2^{-N'} \ell(I_+) }} \angles{\phi_{I}}_{J}1_J, L \}}_E^p\dx\\
&\leq \frac{1}{\abs{I}}\int_I \abs{\sup\{\sup_{\substack{J:J\subseteq {I}, \\ \ell(J)\geq 2^{-(\max\{N,N'\}+1)}\ell(I)}}   \angles{\phi_{I}}_{J}1_J, L \}}_E^p\dx\\
&\leq B(f,F,L),
\end{split}
\end{equation*}
from which the midpoint concavity follows by taking the suprema.
\end{proof}

\begin{remark}An alternative Bellman function can be defined as follows. For each $I\in\cd$, the function $\tilde{B}_I(f,F,A):E_+\times \br_+ \times \{A\subseteq E_+ : \text{ $A$ finite}\}\to \br_+$ is defined by
\begin{equation}\label{bellman_kemppainen}
\begin{split}
\tilde{B}_I(f,F,A):=\sup \Big\{&\frac{1}{\abs{I}}\int_I \abs{\sup\big( A\cup \{ \angles{\phi}_{J}1_J \}_{\substack{J:J\subseteq I \\ \ell(J)\geq 2^{-N} \ell(I)}} \big)}_E^p\dx : \\
&: \text{$\phi_I:\br^d\to E_+$ is locally integrable and satisfies }\\
&\phantom{:} \text{ $\angles{\phi_I}_I=f$ and $\angles{\abs{\phi_I}_E^p}_I=F,$ $N\in\bn$}\Big\}.
\end{split}
\end{equation}
Again, by self-similarity of the dyadic intervals, the function $\tilde{B}_I$ does not depend on the dyadic interval $I$. Hence, it can be denoted by $\tilde{B}$. The function $\tilde{B}(f,F,A)$ has the following properties:
\begin{itemize}
\item[i')] (Boundedness from below) $\abs{\sup A}_E^P\leq \tilde{B}(f,F,L)$ whenever $0<\abs{f}_E^p\leq F$, or $f=0$ and $F=0$.
\item[ii')] (Boundedness from above) $\tilde{B}(f,F,A)\lesssim_p \mathfrak{B}^p(F+\abs{\sup A}_E^p)$
\item[iii')] (Invariance) $\tilde{B}(f,F,A)=\tilde{B}(f,F,A\cup\{f\})$
\item[iv')] (Concavity) For each finite $A\subseteq E_+$, the function $(f,F)\mapsto \tilde{B}(f,F,A)$ is midpoint concave,
\end{itemize}
By considering the Rademacher bound $\mathcal{R}(A)$ in place of the lattice supremum $\sup A$, the Bellman function $\tilde{B}(f,F,A)$  can be viewed as a variant of the auxiliary function that was introduced by Kemppainen \cite[Proposition 7.1]{kemppainen2011} to characterize the boundedness of the Rademacher maximal function $\mathcal{R}_{Q\in\cd} \angles{f}_Q 1_Q $. 

We remark that, in the case of the lattice supremum, the function $\tilde{B}(f,F,A)$ defined in \eqref{bellman_kemppainen} reduces to the Bellman function $B(f,F,L)$ defined in \eqref{bellman_nazarov} by using the identity $\tilde{B}(f,F,A)=B(f,F,\sup A)$, whereas, in the case of the Rademacher bound, there is no such a reduction. This is because the reduction is based on the identity 
$\sup \{A\cup B\}=\sup\{\sup A,\sup B\}$ for the lattice supremum, 
whereas there is no analogous identity for the Rademacher bound.
\end{remark}

\begin{proposition}[Existence of a Bellman function implies the boundedness,  \cite{nazarov1996}]\label{prop_existenceimpliesboundedness}Let $(E,\abs{\cdotroomy}_E,\leq)$ be a Banach lattice. Assume that $\tilde{B}(f,F,A):E_+\times \br_+ \times \{A\subseteq E_+ : \text{ $A$ finite}\}\to \br_+$ is a function having the above-mentioned properties. Then $$
\norm{\bar{M}^\mu_\cd}_{L^p_E(\br^d,\mu)\to L^p_E(\br^d,\mu)}\lesssim_p \mathfrak{B}
$$
for all finite collections $\cd$ of dyadic intervals and all locally finite Borel measures $\mu$.
\end{proposition}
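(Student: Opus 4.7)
The plan is the classical Bellman-function telescoping argument of Nazarov--Treil, executed along the dyadic tree with the measure $\mu$ in place of Lebesgue measure. First, by the remark following Proposition \ref{prop_boundednessimpliesbellman}, the function $(f,F)\mapsto \tilde B(f,F,A)$ is concave on its domain for each fixed finite $A\subseteq E_+$, not merely midpoint concave; this is what allows passage to arbitrary $\mu$-weighted convex combinations.

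Fix a finite collection $\cd$ of dyadic cubes; by enlarging $\cd$ with zero coefficients we may assume that $\cd$ is a truncated dyadic system, so it has a unique top cube $R$ and a finest layer $\cd_*$. Fix a non-negative $f\in L^p_E(\mu)$ (the reduction to non-negative $f$ uses $|f|=f_+ + f_-$ together with $\bar M^\mu_\cd f\le \bar M^\mu_\cd|f|$). For each $Q\in\cd$ with $Q\subseteq R$ set
\[
f_Q:=\angles{f}_Q^\mu,\qquad F_Q:=\angles{|f|_E^p}_Q^\mu,\qquad A_Q:=\{f_{Q'} : Q'\in\cd,\ Q\subsetneq Q'\subseteq R\}.
\]
Jensen's inequality in the Banach lattice gives $|f_Q|_E^p\le F_Q$, so the triple $(f_Q,F_Q,A_Q)$ lies in the domain of $\tilde B$.

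The main step is the one-step telescoping inequality
\[
\mu(Q)\,\tilde B(f_Q,F_Q,A_Q)\ \ge\ \sum_{Q'\in\operatorname{ch}_\cd(Q)} \mu(Q')\,\tilde B(f_{Q'},F_{Q'},A_{Q'}).
\]
To prove it, apply invariance (iii') to rewrite $\tilde B(f_Q,F_Q,A_Q)=\tilde B(f_Q,F_Q,A_Q\cup\{f_Q\})$. Since $A_{Q'}=A_Q\cup\{f_Q\}$ for every child $Q'$, concavity of $(f,F)\mapsto \tilde B(f,F,A_Q\cup\{f_Q\})$ applied to the $\mu$-convex combination $(f_Q,F_Q)=\sum_{Q'}\tfrac{\mu(Q')}{\mu(Q)}(f_{Q'},F_{Q'})$ gives the bound. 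Iterating from $R$ down to the finest layer $\cd_*$, one obtains
\[
\mu(R)\,\tilde B(f_R,F_R,\varnothing)\ \ge\ \sum_{Q_*\in\cd_*,\ Q_*\subseteq R}\mu(Q_*)\,\tilde B(f_{Q_*},F_{Q_*},A_{Q_*}).
\]

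To conclude, observe that on each minimal $Q_*\subseteq R$ and every $x\in Q_*$ one has $\bar M^\mu_\cd f(x)=\sup(A_{Q_*}\cup\{f_{Q_*}\})$, so properties (i') and (iii') yield $|\bar M^\mu_\cd f(x)|_E^p\le \tilde B(f_{Q_*},F_{Q_*},A_{Q_*})$; integrating and combining with the telescoped inequality and property (ii') gives
\[
\int_R |\bar M^\mu_\cd f|_E^p\,\mathrm d\mu\ \le\ \mu(R)\,\tilde B(f_R,F_R,\varnothing)\ \lesssim_p\ \mathfrak B^p\,\mu(R)\,F_R\ =\ \mathfrak B^p\int_R |f|_E^p\,\mathrm d\mu.
\]
Summing over the (finitely many) maximal cubes of $\cd$ yields the claim. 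The main obstacle I anticipate is verifying that all ingredients survive the measure $\mu$: namely, the passage from midpoint concavity to genuine concavity (needed because $\mu(Q')/\mu(Q)$ is not dyadic in general), and the bookkeeping that ensures $A_Q\cup\{f_Q\}=A_{Q'}$ at every step so that invariance can be invoked to keep the $A$-argument synchronized with the telescoping.
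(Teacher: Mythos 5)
Your proof is correct and follows essentially the same telescoping Bellman-function argument as the paper: you use invariance (iii') to synchronize the set argument across one generation of children, pass from midpoint to full concavity to handle the $\mu$-weighted (non-dyadic) convex combination, iterate downward, invoke (i') on the finest layer and (ii') at the top. The only cosmetic difference is that you define $A_Q$ via strict ancestors while the paper includes $Q$ itself (equivalent via (iii')), and at the top cube you write $\tilde B(f_R,F_R,\varnothing)$ where the paper writes $\tilde B(f_R,F_R,\{f_R\})$; one more application of (iii') (or the convention $\sup\varnothing=0$) makes your final appeal to (ii') precise.
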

\begin{proof}[Proof by a slight adaptation of \cite{nazarov1996} in the spirit of \cite{kemppainen2011}] Let $\mu$ be a locally finite Borel measure. Let $Q$ be a dyadic cube and let $Q'\in\ch_\cd(Q)$ be its dyadic children. Let $f:\br^d\to E_+$ be a locally integrable function. Note that $$\angles{f}_Q^\mu=\sum_{Q'\in\ch_\cd(Q)}\frac{\mu(Q')}{\mu(Q)}\angles{f}_{Q'}^\mu, \quad \text{ and }\quad \angles{\abs{f}_E^p}_Q^\mu=\sum_{Q'\in\ch_\cd(Q)} \frac{\mu(Q')}{\mu(Q)}\angles{\abs{f}_E^p}_{Q'}^\mu.$$ Since every every mid-point concave function that is locally bounded from below is in fact concave, the function $(f,F)\mapsto \tilde{B}(f,F,A)$ is in fact concave. From the properties of the Bellman function, it follows that
\begin{equation}\label{bellman_iteration}
\begin{split}
&\sum_{Q'\in\ch_\cd(Q)}\mu(Q') \tilde{B}(\angles{f}_{Q'}^\mu,\angles{\abs{f}_E^p}_{Q'}^\mu, \{\angles{f}^\mu_R\}_{R:R\supseteq Q'})\\
&\overset{(iii')}{=} \sum_{Q'\in\ch_\cd(Q)}\mu(Q') \tilde{B}(\angles{f}_{Q'}^\mu,\angles{\abs{f}_E^p}_{Q'}^\mu, \{\angles{f}^\mu_R\}_{R:R\supseteq Q}) \\
& \overset{(iv')}{\leq} \mu(Q) \tilde{B}(\angles{f}^\mu_Q,\angles{\abs{f}_E^p}_Q^\mu, \{\angles{f}^\mu_R\}_{R:R\supseteq Q}).
\end{split}
\end{equation}
Fix a dyadic cube $Q_0$ and a non-negative integer $N$. Iterating the inequality \eqref{bellman_iteration} and using the properties of the Bellman function yields
\begin{equation*}
\begin{split}
&\int \abs{\sup_{R:R\subseteq Q_0,\ell(R)\geq 2^{-N} \ell(Q_0)} \angles{f}^\mu_R 1_R} \dmu\\
&=\sum_{\substack{Q:Q\subseteq Q_0,\\ \ell(Q)=2^{-N}\ell(Q_0)}} \mu(Q) \abs{\sup_{R:Q_0\supseteq R\supseteq Q}\angles{f}_R^\mu}^p_E\\
&\overset{(i')}{\leq}  \sum_{\substack{Q:Q\subseteq Q_0,\\ \ell(Q)=2^{-N}\ell(Q_0)}} \mu(Q) \tilde{B}(\angles{f}^\mu_Q, \angles{\abs{f}_E^p}_Q^\mu, \{\angles{f}_R^\mu\}_{R:Q_0\supseteq R\supseteq Q})\\
&\overset{\eqref{bellman_iteration}}{\leq }\sum_{\substack{Q:Q\subseteq Q_0,\\ \ell(Q)=2^{-{(N-1)}}\ell(Q_0)}} \mu(Q) \tilde{B}(\angles{f}^\mu_Q, \angles{\abs{f}_E^p}_Q^\mu, \{\angles{f}_R^\mu\}_{R:Q_0\supseteq R\supseteq Q})\\
&\leq\cdots\leq \mu(Q_0) \tilde{B}(\angles{f}^\mu_{Q_0}, \angles{\abs{f}_E^p}_{Q_0}^\mu, \{\angles{f}_{Q_0}^\mu\})\\
&\overset{(ii')}{\lesssim_p}   \mathfrak{B}\mu(Q_0) \angles{\abs{f}_E^p}_{Q_0}^\mu +\abs{\angles{f}_{Q_0}^\mu}_E^p\leq 2 \mathfrak{B} \int_{Q_0} \abs{f}_E^p \dmu.
\end{split}
\end{equation*}
\end{proof}

\subsection{Endpoint $L^\infty$ testing condition}\label{sec_alternativeproofformaximal}
A collection $\cd$ of dyadic cubes is {\it a truncated dyadic system} if $$\cd=\{Q : Q\subseteq Q_0, \ell(Q)\geq 2^{-N} \ell(Q_0)\}=:\cd^{Q_0}_N$$
for some dyadic cube $Q_0$ and some positive integer $N$. For each $R\in\cd$, the {\it localized} dyadic lattice Hardy--Littlewood operator $\bar{M}_{\cd,R}$ is defined by
$$
\bar{M}_{\cd,R}f:=\sup_{\substack{Q\in\cd:\\Q\subseteq R}} \angles{f}_Q 1_Q.
$$
\begin{theorem}[Boundedness of the dyadic lattice maximal operator is characterized by the endpoint direct $L^\infty$ testing condition, \cite{torrea1993}] \label{prop_maximalcharacterization}Let $1<p<\infty$. Let $\cd$ be a truncated dyadic system on $\br^d$. Then
$$
\norm{\mbar_\cd}_{L^p_E\to L^p_E}\eqsim_{p,d} \mathfrak{M},
$$
where the endpoint $L^\infty$ testing constant $\mathfrak{M}$ is the least constant such that
\begin{equation}\label{eq_testingmaximal}
\norm{\bar{M}_{\cd,R} f}_{L^1_E}\leq \mathfrak{M}\norm{f}_{L^\infty_E(R)} \abs{R}
\end{equation}
for every $R\in\cd$, and every $f\in L^\infty_E(R)$.
\end{theorem}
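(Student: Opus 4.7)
The plan is to verify both directions of $\norm{\mbar_\cd}_{L^p_E\to L^p_E}\eqsim_{p,d}\mathfrak{M}$. The easy direction $\mathfrak{M}\leq\norm{\mbar_\cd}_{L^p_E\to L^p_E}$ follows immediately from H\"older: for $R\in\cd$ and $f\in L^\infty_E(R)$,
\[
\norm{\mbar_{\cd,R}f}_{L^1_E}\leq\norm{\mbar_{\cd,R}f}_{L^p_E}\abs{R}^{1/p'}\leq\norm{\mbar_\cd}_{L^p_E\to L^p_E}\norm{f}_{L^\infty_E(R)}\abs{R}.
\]
The hard direction $\norm{\mbar_\cd}_{L^p_E\to L^p_E}\lesssim_{p,d}\mathfrak{M}$ will be obtained by a bootstrap combining stopping cubes, the endpoint testing condition, Pythagoras (Lemma \ref{lem_pythagoras}), the dyadic Carleson embedding theorem (Lemma \ref{lem_carlesonembedding}), and the H\"older splitting $\norm{g}_{L^p_E}^p\leq\norm{g}_{L^\infty_E}^{p-1}\norm{g}_{L^1_E}$.

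After reducing to $f\geq 0$ and replacing $f$ by its finest-scale averaging $\tilde f:=\be_{\cd_*}f$ (so $\mbar_\cd\tilde f=\mbar_\cd f$ and $\norm{\tilde f}_{L^p_E}\leq\norm{f}_{L^p_E}$), I take $\cf$ to be the stopping family whose children $\chf(F)$ are the maximal $F'\subsetneq F$ satisfying stopping condition A or stopping condition B of Table \ref{table_stopping conditionss} applied to $\tilde f$. The collection $\cf$ is sparse, and the associated auxiliary functions
\[
h_F:=\sup_{Q:\pif(Q)=F}\angles{\tilde f}_Q 1_Q,\qquad \tilde f_F:=\sum_{F'\in\chf(F)}\angles{\tilde f}_{F'}1_{F'}+\tilde f\,1_{\ef(F)}
\]
satisfy $\norm{h_F}_{L^\infty_E}\leq 4\angles{\abs{\mbar_\cd\tilde f}_E}_F$ by condition A and $\norm{\tilde f_F}_{L^\infty_E(F)}\lesssim_d\angles{\abs{\tilde f}_E}_F$ by condition B, combined with the pointwise identity $\abs{\tilde f(x)}_E=\angles{\abs{\tilde f}_E}_{Q_*(x)}\lesssim_d\angles{\abs{\tilde f}_E}_F$ for $x\in\ef(F)$, where $Q_*(x)$ is the minimal cube of $\cd$ containing $x$ (whose stopping parent is $F$, so condition B forces its average to be controlled by that of $F$). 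A direct verification shows that $h_F$ is constant on each $F'\in\chf(F)$: for $x\in F'$, the cubes $Q$ with $\pif(Q)=F$ and $x\in Q$ are precisely those with $F'\subseteq Q\subseteq F$, and these averages do not depend on $x\in F'$, so $h_F$ fits the hypothesis of Pythagoras. Since $\angles{\tilde f_F}_Q=\angles{\tilde f}_Q$ whenever $\pif(Q)=F$, one also obtains the lattice bound $h_F\leq\mbar_{\cd,F}\tilde f_F$.

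Applying the endpoint $L^\infty$ testing condition to $\tilde f_F$ now yields $\norm{h_F}_{L^1_E}\leq\mathfrak{M}\norm{\tilde f_F}_{L^\infty_E(F)}\abs{F}\lesssim_d\mathfrak{M}\angles{\abs{\tilde f}_E}_F\abs{F}$, and the H\"older splitting gives
\[
\norm{h_F}_{L^p_E}^p\leq\norm{h_F}_{L^\infty_E}^{p-1}\norm{h_F}_{L^1_E}\lesssim_{p,d}\mathfrak{M}\angles{\abs{\mbar_\cd\tilde f}_E}_F^{p-1}\angles{\abs{\tilde f}_E}_F\abs{F}.
\]
By positivity $\mbar_\cd\tilde f=\sup_F h_F\leq\sum_F h_F$, so Pythagoras gives $\norm{\mbar_\cd\tilde f}_{L^p_E}^p\leq(3p)^p\sum_F\norm{h_F}_{L^p_E}^p$; H\"older with exponents $p/(p-1)$ and $p$, followed by the dyadic Carleson embedding applied to both factors, bounds the sum by a dimensional constant times $\mathfrak{M}\norm{\mbar_\cd\tilde f}_{L^p_E}^{p-1}\norm{\tilde f}_{L^p_E}$. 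Since $\cd$ is finite, $\norm{\mbar_\cd\tilde f}_{L^p_E}<\infty$, and dividing yields $\norm{\mbar_\cd\tilde f}_{L^p_E}\lesssim_{p,d}\mathfrak{M}\norm{f}_{L^p_E}$.

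The main obstacle is the self-referential appearance of $\angles{\abs{\mbar_\cd\tilde f}_E}_F$ in the $L^\infty_E$ bound on $h_F$, which forces the argument to proceed as a bootstrap that closes only because $\cd$ is finite (so $\norm{\mbar_\cd\tilde f}_{L^p_E}$ is \emph{a priori} finite). The two-condition stopping is essential: condition B alone does not control the lattice supremum inside $h_F$ in $L^\infty_E$, whereas condition A alone does not yield the $L^\infty_E$ bound on $\tilde f_F$ needed in order to invoke the endpoint testing condition.
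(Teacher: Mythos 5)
Your proposal is correct and follows essentially the same route as the paper: the same two-condition stopping family (lattice-supremum condition A plus Muckenhoupt--Wheeden condition B), the same auxiliary functions, the same H\"older splitting $\norm{h_F}_{L^p_E}^p\leq\norm{h_F}_{L^\infty_E}^{p-1}\norm{h_F}_{L^1_E}$, the same invocation of the testing condition on $\tilde f_F$, and the same Pythagoras--Carleson--bootstrap closing. Your explicit passage to the finest-scale averaging $\tilde f=\be_{\cd_*}f$ at the outset is a welcome tidying-up of a detail the paper leaves to the surrounding remarks, but it does not change the argument.
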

This theorem was proven Garc\'ia-Cuerva, Mac\'ias, and Torrea  \cite{torrea1993} by applying the theory of vector-valued singular integrals to a smooth, linearized version of the lattice maximal function. Here, we give an alternative proof by using stopping cubes.
\begin{proof}[Alternative proof by stopping cubes]
Let $\cf$ be the stopping family defined by the following stopping children: For each $F\in\cf$, the children $\chf(F)$ are the maximal dyadic cubes $F'\subseteq F$ such that
\begin{equation}\label{max_stoppingcondition_lattice}
\abs{\sup_{\substack{Q\in \cd: Q\supseteq F'}}\angles{f}_Q }_E\geq 4 \angles{\abs{\bar{M}_\cd f}_E}_F
\end{equation}
or
\begin{equation}\label{max_stoppingcondition_average}
\angles{\abs{f}_E}_{F'}>4 \angles{\abs{f}_E}_F.
\end{equation}
The stopping collection $\cf$ is sparse because
$$
\sum_{F'} \abs{F'}\leq \sum_{\substack{\text{$F'$ chosen by}\\\text{the first condition}}} \abs{F'} + \sum_{\substack{\text{$F'$ chosen by}\\\text{the second condition}}}\abs{F'}\leq( \frac{1}{4}+\frac{1}{4})\abs{F}=\frac{1}{2}\abs{F}.
$$ 
By arranging the dyadic cubes according to the stopping parents, using the $L^p$ variant of Pythagoras' theorem (Lemma \ref{lem_pythagoras}), and pulling out the $L^\infty_E$ norm,
\begin{equation*}
\begin{split}
\norm{\mbar_\cd f}_{L^p_E}&=\norm{\sup_{F\in F} \sup_{\substack{Q\in \cd:\\ \pif(Q)=F}} \angles{f}_Q 1_Q }_{L^p_E}\\
&\leq \norm{\sum_{F\in\cf}\sup_{\substack{Q\in \cd:\\ \pif(Q)=F}} \angles{f}_Q 1_Q }_{L^p_E}\\
&\lesssim_p  \Big(\sum_{F\in\cf}\norm{\sup_{\substack{Q\in \cd:\\ \pif(Q)=F}} \angles{f}_Q 1_Q }_{L^p_E}^p\Big)^{1/p}\\
&\leq \Big(\sum_{F\in\cf}\norm{\sup_{\substack{Q\in \cd:\\ \pif(Q)=F}} \angles{f}_Q 1_Q }_{L^\infty_E}^{p-1} \norm{\sup_{\substack{Q\in \cd:\\ \pif(Q)=F}} \angles{f}_Q 1_Q }_{L^1_E}\Big)^{1/p}.
\end{split}
\end{equation*}
From the stopping condition \eqref{max_stoppingcondition_lattice}, it follows (see Table \ref{table_stopping conditionss}) that $$\norm{\sup_{\substack{Q\in \cd:\\ \pif(Q)=F}} \angles{f}_Q 1_Q }_{L^\infty_E}\leq 2 \angles{\abs{\bar{M}_\cd f}_E}_F.$$ From the stopping condition \eqref{max_stoppingcondition_average}, it follows (again, see Table \ref{table_stopping conditionss}) that 
$$
\angles{f}_Q=\angles{f_F}_Q \text{ whenever } \pif(Q)=F,$$ where the auxiliary function $f_F$ is defined by $f_F:=f1_{\ef(F)}+\sum_{F'\in\ch(F)} \angles{f}_{F'}1_{F'} 
$ and satisfies  $$\norm{f_F}_{L^\infty_E}\lesssim 2^d \angles{\abs{f}_E}_F.$$  Therefore, from the testing condition \eqref{eq_testingmaximal}, H\"older's inequality together with the identity $(p-1)p'=p$, and the dyadic Carleson embedding theorem (Lemma \ref{lem_carlesonembedding}), it follows that
\begin{equation*}
\begin{split}
&\Big(\sum_{F\in\cf}\norm{\sup_{\substack{Q\in \cd:\\ \pif(Q)=F}} \angles{f}_Q 1_Q }_{L^\infty_E}^{p-1} \norm{\sup_{\substack{Q\in \cd:\\ \pif(Q)=F}} \angles{f}_Q 1_Q }_{L^1_E}\Big)^{1/p}\\
&=\Big(\sum_{F\in\cf}\norm{\sup_{\substack{Q\in \cd:\\ \pif(Q)=F}} \angles{f}_Q 1_Q }_{L^\infty_E}^{p-1} \norm{\sup_{\substack{Q\in \cd:\\ \pif(Q)=F}} \angles{f_F}_Q 1_Q }_{L^1_E}\Big)^{1/p}\\
&\lesssim \mathfrak{M}^{1/p} \Big(\sum_{F\in\cf} \angles{\abs{\bar{M}_\cd f}_E}_F^{(p-1)}\mu(F)^{1/p'}\angles{\norm{f}_E}_F \mu(F)^{1/p}\Big)^{1/p}\\
&\leq  \mathfrak{M}^{1/p} \bigg(\Big(\sum_{F\in\cf} \angles{\abs{\bar{M}_\cd f}_E}_F^{(p-1)p'}\mu(F)\Big)^{1/p'}  \Big(\sum_{F\in\cf}\angles{\norm{f}_E}_F^p \mu(F)\Big)^{1/p}\bigg)^{1/p}\\
&\lesssim_p \mathfrak{M}^{1/p} \norm{\mbar_\cd f}_{L^p_E}^{1/p'} \norm{f}_{L^p_E}^{1/p}.
\end{split}
\end{equation*}
Altogether,
$$
\norm{\mbar_\cd f}_{L^p_E}\lesssim_p  \norm{\mbar_\cd f}_{L^p_E}^{1/p'} (\mathfrak{M}\norm{f}_{L^p_E})^{1/p},
$$
from which the norm estimate follows, by dividing out the factor $\norm{\mbar_\cd f}_{L^p_E}^{1/p'}$.
\end{proof}

\begin{question}[Borderline: Can we omit the assumption that the measure is doubling?]For each (in particular, for non-doubling) locally finite Borel measure $\mu$, is the boundedness of the dyadic lattice maximal operator $\bar{M}^\mu_\cd: L^p_E(\mu)\to L^P_E(\mu)$ characterized by the endpoint direct $L^\infty(\mu)$ testing condition?
\end{question}

\section*{Acknowledgments}
This paper is part of the author's Ph.D. thesis project written under the supervision of Tuomas Hyt\"onen. The author is supported by the European Union through Hyt\"onen's ERC Starting Grant \lq Analytic-probabilistic methods for borderline singular integrals\rq. The author thanks Mikko Kemppainen for communicating him that the boundedness of the lattice maximal operator can be characterized by the existence of an auxiliary function with certain concavity and boundedness properties, and for providing the author with an unpublished manuscript containing the proof of this characterization.

\bibliographystyle{plain}
\bibliography{references_two_weights}
\end{document}